\def\newaliasedtheorem#1[#2]#3{
  \newaliascnt{#1@alt}{#2}
  \newtheorem{#1}[#1@alt]{#3}
  \expandafter\newcommand\csname #1@altname\endcsname{#3}
}
\numberwithin{equation}{section}
\theoremstyle{plain}
\newtheorem{theorem}{Theorem}[section]
\theoremstyle{definition}
\theoremstyle{remark}
\newcommand{\R}{\mathbb{R}}
\renewcommand{\c}{\mathsf{c}}
\newcommand{\eps}{\varepsilon}
\newcommand{\Id}{\operatorname{Id}}
\newcommand{\C}{C}
\newcommand{\dist}{\mathsf{d}}
\renewcommand{\d}{\mathrm{d}}
\newcommand{\diam}{\operatorname{diam}}
\newcommand{\f}{\mathsf{f}}
\newcommand{\g}{\mathsf{g}}
\newcommand{\sv}{\mathsf{v}}
\newcommand{\cV}{\mathcal{V}}
\newcommand{\cI}{\mathcal{I}}
\newcommand{\jet}{\mathfrak{J}}
\renewcommand{\bra}[1]{\left(#1\right)}
\newcommand{\cur}[1]{\left\{#1\right\}}
\newcommand{\sqa}[1]{\left[#1\right]}
\newcommand{\ang}[1]{\left<#1\right>}
\newcommand{\abs}[1]{\left\lvert#1\right\rvert}
\newcommand{\norm}[1]{\left\lVert#1\right\rVert}
\newcommand{\nnrm}[1]{{\left\vert\kern-0.25ex\left\vert\kern-0.25ex\left\vert #1
    \right\vert\kern-0.25ex\right\vert\kern-0.25ex\right\vert}}
\newcommand{\simp}{\mathfrak{S}}
\newcommand{\res}{\llcorner} 
\newcommand{\diff}{\mathbb{D}}
\begin{document}

\title[On rough exterior differential systems]{On exterior differential systems involving differentials of H\"{o}lder functions}


\author{Eugene Stepanov}
\address{St.Petersburg Branch of the Steklov Mathematical Institute of the Russian Academy of Sciences,
Fontanka 27,
191023 St.Petersburg, Russia
\and
Department of Mathematical Physics, Faculty of Mathematics and Mechanics,
St. Petersburg State University 
\and 
Faculty of Mathematics, Higher School of Economics, Moscow}
\email{stepanov.eugene@gmail.com}

\author{Dario Trevisan}
\address{Dario Trevisan, Dipartimento di Matematica, Universit\`a di Pisa \\
Largo Bruno Pontecorvo 5 \\ I-56127, Pisa}
\email{dario.trevisan@unipi.it}

\thanks{  The work of the first author has been partially financed by the University of Pisa Visiting Fellow program 2021 and by the RFBR grant \#20-01-00630 A.	
The work of the second author has been partially supported by the INdAM GNAMPA project 2020 ``Problemi di ottimizzazione con vincoli via trasporto ottimo e incertezza''.}

\date{\today}

\maketitle

\begin{abstract}
We study the validity of an extension of Frobenius theorem on integral manifolds for some classes of 
Pfaff-type systems 
of partial differential equations
involving multidimensional ``rough'' signals, i.e.\ ``differentials'' of given 
H\"{o}lder continuous functions interpreted in a suitable way, similarly to Young Differential Equations  in Rough Paths theory.
This can be seen as a tool to study solvability
of exterior differential systems involving rough differential forms, 
i.e.\ the forms involving weak (distributional) derivatives of highly irregular
(e.g.\ H\"{o}lder continuous) functions; the solutions (integral manifolds) 
being also some very weakly regular geometric structures.
\end{abstract}

\tableofcontents

\section{Introduction}\label{sec_Intro1}

The basic tool to study solvability of exterior differential systems
is the classical Frobenius theorem, see e.g.~\cite[theorem~VI.3.1]{hartman_ordinary_2002}, providing necessary and sufficient conditions for existence and uniqueness of manifolds tangent to a given family of vector fields, 
or equivalently, from a dual point of view, 
annihilating a given family of smooth differential $1$-forms. 
If the family consists of a single vector field, it is just the 
existence and uniqueness theorem for solutions of ODE's, but in the general
case of many vector fields an extra geometric condition, usually called 
{\em involutivity}, formulated in terms of commutators (Lie brackets) of vector
fields (or, equivalently, in terms of exterior differentials of forms)  appears to be essential for solvability. This condition requires differentiation, and thus the classical Frobenius theorem can only be applied when the data, i.e., vector
fields and/or differential forms, are smooth.

\subsubsection*{A motivating example}
The extension of Frobenius theorem to exterior differential systems 
with low regularity is quite nontrivial even in the case when the data
are just Lipschitz continuous (hence, still differentiable, but only 
almost everywhere); important steps in this direction have been done 
in~\cite{simic_1996},
~\cite{rampazzo_2007} 
and~\cite{montanari_frobenius-type_2013}. 
A further remarkable result is contained in~\cite{luzzatto_integrability_2016}, 
where one considers vector fields generating just continuous distributions of
hyperplanes with some extra conditions.
In this paper we make an attempt to consider some exterior differential systems involving terms which are not functions but weak (distributional) derivatives of H\"{o}lder functions.
As a motivating example, consider the problem of finding a 
surface in $\R^3$ parameterized as a graph of a
function $\theta: I^2 \to \R$ 
(with $I\subset \R$  interval) satisfying the following  Pfaff system of differential equations on $I^2$, 
\begin{equation} \label{eq:pfaff} \begin{cases} \partial_1 \theta (s_1,s_2) & = f(s_1, s_2, \theta (s_1,s_2)) \partial_1 g (s_1,s_2) \\
 \partial_2 \theta(s_1,s_2) &= f(s_1, s_2,\theta(s_1,s_2)) \partial_2 g (s_1,s_2),
\end{cases}\end{equation}
with $\partial_i = \partial_{s_i}$, $i \in \cur{1,2}$, $f\colon I^2 \times \R \to \R$, $g\colon I^2 \to \R$ smooth.
Since a smooth $\theta\colon I^2\to \R$ necessarily satisfies $\partial_2 \partial_{1} \theta = \partial_{1} \partial_2 \theta$, then a 
straightforward computation gives 
for a classical solution $\theta$ of~\eqref{eq:pfaff}
the following version of the involutivity condition
\begin{equation}\label{eq:involutivity-intro} \partial_2 f \partial_1 g = \partial_1f \partial_2g,
\end{equation}
that should be valid at every point $(s_1, s_2,\theta(s_1,s_2))\in I^2\times \R$ with $(s_1,s_2) \in I^2$. 
Frobenius theorem ensures that the above necessary condition~\eqref{eq:involutivity-intro} is also sufficient for 
unique solvability of~\eqref{eq:pfaff} when $f$ and $g$ 
are sufficiently smooth: namely, if~\eqref{eq:involutivity-intro} holds for every $(x,y,v)\in I^2\times \R$, with $f$ evaluated at $(x, y,v)$ and $g$ at $(x,y)$, then~\eqref{eq:pfaff} admits locally
a unique solution
with the prescribed ``initial'' condition $\theta(s_0, t_0) = \theta_0$, for  $(s_0, t_0) \in I^2$, $\theta_0\in \R$.

We are aimed at solving~\eqref{eq:pfaff} and similar  systems
when $g$ is only H\"{o}lder continuous, say $g \in C^{\beta}(I^2)$, in a robust way, e.g., stable with respect to approximation of $g$ by sequences 
in $C^{\beta}(I^2)$. Since $g$ may be nowhere differentiable, even the definition of a solution 
to~\eqref{eq:pfaff} is not immediate (in fact, formally 
as written~\eqref{eq:pfaff} makes no sense) Moreover, one has to understand what is the
correct version of the solvability condition which should reduce 
to~\eqref{eq:involutivity-intro} in case all the data are smooth. 
This fits naturally in the general direction of research 
receiving growing attention nowadays, related 
to differential equations with potentially 
purely non-differentiable (like H\"older continuous) unknowns, 
and usually referred to as {\em ``Rough paths theory''} 
or just {\em ``Rough calculus''}~\cite{friz_course_2014, gubinelli_controlling_2004}.
It is worth observing that if $\theta$, $g$ 
and $f(\cdot, v)$ for every $v\in \R$ depend only 
on one variable instead of two, then~\eqref{eq:pfaff} (which becomes then a single equation instead of a system) can be thought as a \textit{Rough Differential Equation} (RDE), a rough analogue of an ODE, and so in general situation~\eqref{eq:pfaff} can be considered a rough analogue of the Pfaff system of PDEs.

The original raison 
d'\^{e}tre of the rough calculus is stochastics; 
in fact, rough paths and rough differential equations originally came in a certain sense as an alternative to classical stochastic differential equations allowing to study each  trajectory of the stochastic flow without referring to any underlying
martingale structure. In particular, one can think of~\eqref{eq:pfaff} (to be correctly interpreted) when $g$ is a representative of some stochastic function 
(e.g.~a Brownian sheet).
However, recently, a growing number of 
applications have widened the scope of rough calculus beyond stochastics
to include purely geometric problems. For instance, in~\cite{magnani_2018} a rough calculus approach has been shown natural to tackle a particular case of a well-known problem of subriemannian geometry of graded nilpotent Lie groups, 
namely, the study of the structure of level sets of maps only intrinsically differentiable
in the sense of P.~Pansu~\cite{pansu_1989} (such maps are known to be
generically irregular in the Eucliedan sense). Namely, it has been shown that level sets of maps from the Heisenberg group $H^1$ to $\R^2$, regular only in the intrinsic sense of $H^1$, are curves, possibly only H\"{o}lder regular, satisfying some ``autonomous'' analogue of
an RDE, called in this case Level Set Differential Equation. Extending this approach to maps over
higher order Carnot groups, one naturally concludes that their level sets should satisfy some rough analogues of Pfaff system of PDEs, e.g.\ 
similar to~\eqref{eq:pfaff}, but with $g$ depending on the unknown $\theta$ rather than on coordinates only, thus presently outside the scope of the theory developed in this work.

\subsubsection*{Rough exterior differential systems}
Indeed, our aim is to prove some Frobenius-type solvability
results for rough analogues
of Pfaff equations similar to~\eqref{eq:pfaff}, where irregularity is due
to the presence of ``differentials'' of given H\"{o}lder continuous functions
(such as $g$ in~\eqref{eq:pfaff}), which do not depend on the unknown.      
Such systems of equations can be naturally interpreted as exterior differential systems provided by a set of ``rough differential $1$-forms'', a notion that has been developed and studied in~\cite{stepanov_sewing_17}. 
For instance, to write~\eqref{eq:pfaff} in the language of forms, one may consider the $1$-form in $\R^3$ written $\omega := \, \d x_3- f \, \d g$,
where $f=f(x_1,x_2, x_3)$, $g=g(x_1,x_2)$ and
$(x_1, x_2, x_2)$ denote coordinates in $\R^3$, 
and defined by its action $\ang{[pq],\omega}$ on oriented segments  $[pq]$
by the formula
\[
\ang{[pq],\omega}:=\int_{[pq]}\, dx_3  -\int_{[pq]} f\, \d g =
(q_3-p_3)  -\int_{[pq]} f\, \d g , 
\]
where the integral is understood as the Young integral~\cite{young_inequality_1936} of the restrictions
of $f$ and $g$ to $[pq]$. If we are looking for a $\bar\theta\colon I^2\to \R^3$ such that
\[
\begin{split}
\bar\theta^*\,dx_1 &=ds_1, 
\quad \bar\theta^*\,dx_2 =ds_2,\\
\bar\theta^*\,\omega &=0,
\end{split}
\]  
with $(x_1, x_2, x_2)$ denoting coordinates in $\R^3$
and
$(s_1,s_2)$ denoting coordinates in $I^2\subset \R^2$, 
then  from the first two equations above we have $\bar\theta_1(s_1,s_2) = s_1+c_1$,
$\bar\theta_2(s_1,s_2) = s_2+c_2$, with $c_i$ arbitrary constants, $i=1,2$. Choosing $c_1=c_2=0$, we get
from the third equation that $\theta:=\bar \theta_3$ satisfies
\begin{equation}\label{eq_pf_theta1}
\ang{[ab],\d\theta}= \theta(b)-\theta(a) =
\int_{[ab]} f(s_1, s_2, \theta(s_1,s_2)) \, \d g(s_1,s_2),
\end{equation}
for every $a=(a_1,a_2)$, $b=(b_1,b_2)$ in $I^2$. If $g$ is smooth, and so is $\theta$, this implies
\[
\nabla\theta(s_1,s_2)= f(s_1,s_2, \theta(s_1,s_2))
\nabla g(s_1,s_2),
\]
for every $(s_1,s_2)\in I^2$ that is, we recover~\eqref{eq:pfaff}.

\subsubsection*{Discrete approach: germs and asymptotic expansions}
The integral equation~\eqref{eq_pf_theta1} for $\theta$ is formally defined
either when $g$ is smooth (while $f$ and $\theta$ are, say, just continuous), 
in which case the integral involved is the classical Riemann (or, equivalently, Lebesgue) one, or when both $g$, $f$ and $\theta$ are just 
H\"{o}lder continuous with appropriate H\"{o}lder exponents, when 
the integral involved can be understood as the Young integral \cite{young_inequality_1936}
over the restrictions of the respective functions to $[ab]$. In the latter case, alternatively one may use instead of ``differentials'' $\d\theta$ and $\d g$
the finite differences $\delta\theta$ and $\delta g$ respectively, 
replacing~\eqref{eq_pf_theta1} with the asymptotic expansions
\begin{equation}\label{eq_pf_theta2}
(\delta\theta)_{ab}:=\theta(b)-\theta(a) =
f(a_1, a_2, \theta(a_1,a_2)) \, (\delta g)_{ab} + o(|b-a|)
\end{equation}
for every $a=(a_1,a_2)$, $b=(b_1,b_2)$ in $I^2$, where
 $(\delta g)_{ab}:= g(b)-g(a)$. Note that in the case of smooth $g$ this is equivalent to~\eqref{eq_pf_theta1}.
Using the language introduced in~\cite{stepanov_sewing_17}, this amounts to replacing the ``rough differential form'' $\omega$ by its discrete germ
\[
\eta:= \delta x_3- f \, \delta g
\]
defined by its action over 
 by its action $\ang{[pq],\omega}$ on segments $[pq]$
by the formula
\[
\ang{[pq],\eta}:=
(q_3-p_3)  -f(p)\, (\delta g)_{pq}. 
\]

\subsubsection*{Involutivity condition in discrete form}
Substituting systematically differentials with finite differences and differential (or, equivalently, integral) equations with the appropriate asymptotic expansions, we are also able to reformulate the involutivity condition~\eqref{eq:involutivity-intro} without the use of partial derivatives of $g$. 
Indeed, in the smooth case, we notice that \eqref{eq:involutivity-intro}  is equivalent to the existence of some function that we denote 
$\diff_g f\colon \R^3\to \R$, such that
\[ 
\nabla_{12} f = \diff_g f \nabla g,
\]
where $\nabla_{1,2}$ stands for the gradient in the first two coordinates in $\R^3$.
Using this identity in the first-order Taylor expansions of $f$ and $g$, when 
the latter functions are smooth, one obtains the expansion
\begin{equation}\label{eq_invol_d1}  f(q, v) - f(p,u) = \diff_g f(p,u) \bra{g(q) - g(p)}  + \partial_3f(p,u)(v-u) + o \bra{ \abs{q-p} + \abs{v-u} },
\end{equation}
which is equivalent to~\eqref{eq:involutivity-intro} for smooth data, and
is a natural replacement for~\eqref{eq:involutivity-intro} (with more precise estimate of the asymptotic error term) when
$g$ is only H\"older continuous, 
since it does not involve derivatives of $g$. This condition may be
naturally called $(g, x_3)$-differentiability of $f$ 
(i.e.\ differentiability with respect to $g$ in the first two variables and usual differentiability in the last one), and is in particular
valid when, say, $f(p,z) = F(g(p), z)$ for some smooth $F\colon \R^2\to \R$.

\subsubsection*{Results}

In the first of our main results, Theorem~\ref{thm:frob-1}, 
we prove 
well-posedness (i.e., existence, uniqueness and stability with respect to approximations) of~\eqref{eq:pfaff} 
and its more general analogues
under the involutivity conditions which in case of~\eqref{eq:pfaff}
reduce to~\eqref{eq_invol_d1} for possibly nonsmooth, just H\"{o}lder continuous $g$, for a substantial range of possible  H\"{o}lder exponents of $g$. In particular, it holds true for $g\in C^\beta(\R^2)$, for instance, when
$f(p,z) = F(g(p), z)$ for some $F \in C^{1, \gamma}(\R^2)$ and 
$$\beta(2 + \gamma) > 2.$$
Note however, that although this covers a wide range of possible regularity
of the data in~\eqref{eq:pfaff}, it is not clear whether this is optimal.
In fact, a smooth 
$\theta$ satisfies~\eqref{eq_pf_theta2} (hence~\eqref{eq:pfaff}),
if and only if, for every differentiable curve $\gamma\colon  (-1,1) \to I^2$, the composition $t \mapsto \theta( \gamma(t) )$ satisfies
\begin{equation*}
\label{eq:pfaff-young0}
 \frac{\d}{\d t} (\theta \circ \gamma) (t)  = f( \gamma(t),  (\theta\circ\gamma) (t) )  \frac{\d}{\d t} (g \circ \gamma) (t), \quad \text{for $t \in (-1,1)$,}
\end{equation*}
that is, 
\begin{equation}
\label{eq:pfaff-young}
  (\theta \circ \gamma) (t) = (\theta\circ\gamma) (0) ) + \int _0^t f( \gamma(v),  (\theta\circ\gamma) (s) ) \,\d ( g\circ\gamma)(s)  \quad \text{for $t \in (-1,1)$.} 
\end{equation}
Therefore, when $g$ is only H\"older continuous, we may require for a solution $\theta$ to~\eqref{eq:pfaff} to be a H\"older continuous function such that, for every $\gamma \in C^1( (-1,1); I^2)$ the curve $\theta
\circ\gamma$ solves~\eqref{eq:pfaff-young} in the sense 
of Young~\cite{young_inequality_1936}.
As in the theory of 
Young Differential Equations~\cite{gubinelli_controlling_2004}, if $g \in C^{\beta}(I^2)$, then one may also expect $\theta \in C^{\beta}(I^2)$, 
and a minimal requirement to give meaning to the integral in~\eqref{eq:pfaff-young} would be then $f\in C^\gamma(\R^3)$, with $$\beta(1+\gamma)>1.$$
Theorem~\ref{thm:frob-1} is still far from this threshold, since e.g.\ even in the case $f= f(x_3)$ it requires $\beta(2 + \gamma)>2$. 

A natural question is what happens if the
rough differential form 
$\omega=\d x_3- f_1 \, \d g_1- f_2 \, \d g_2$
for some $f_i\colon I^2 \times \R \to \R$, $g_i\colon I^2 \to \R$, $i=1,2$, 
i.e.\ the respective problem of the type~\eqref{eq:pfaff} contains several
``rough signals'' $g_i$ instead of just a single $g$. 
A step towards this direction is our
second main result, Theorem~\ref{thm:frob-2}, which gives an extension of
the classical Frobenius theorem (with a version of involutivity condition) to a class of such situations with only H\"{o}lder continuous $g_i$ depending each on a different coordinate (i.e.\ in the above terms, say, $g_1=g_1(s_1)$ and $g_2=g_2(s_2)$).  
Then 
the respective Pfaff system of differential equations becomes
\begin{equation} \label{eq:pfaff} \begin{cases} \partial_1 \theta (s_1,s_2) & = f_1(s_1, s_2, \theta (s_1,s_2)) \partial_1 g_1 (s_1) \\
\partial_2 \theta(s_1,s_2) &= f_2(s_1, s_2,\theta(s_1,s_2)) \partial_2 g_2 (s_2)
\end{cases}\end{equation}
(of course, $\partial_1 g_1$ and $\partial_2 g_1$ are just the ordinary derivatives in this case, since $g_1$ and $g_2$ are assumed to depend each on a single variable only).

A side product, which seems to be of some independent interest, is an implicit function 
Theorem~\ref{thm:dini} for possibly nonsmooth $g$-differentiable maps. 

The basic underlying technical tool used is that of integration of H\"older $2$-forms as introduced in~\cite{stepanov_sewing_17}, developing the construction of \cite{zust_integration_2011}, which provides an extension of one-dimensional Young integrals to integrals
of rough differential forms of higher dimensions (in particular, here integrals of
$2$-forms are needed). Note that in order to avoid requesting the reader to be acquainted with the language and tools developed in~\cite{stepanov_sewing_17}, here
we force ourselves to use only integrals of rough $1$-forms; the latter are reducible to Young integrals. In this way the technique~\cite{stepanov_sewing_17} might
remain ``hidden''; to avoid this we always give comments of how to formulate
the respective assertions with the help of integration of $2$-forms whenever necessary, so that the reader acquainted with the technique from~\cite{stepanov_sewing_17} might see it at work here.

\subsubsection*{Notation}

We use extensively the theory of Young integration and the notation from the recent expositions of rough paths theory~\cite{gubinelli_controlling_2004, friz_course_2014}. Here, we introduce some basic notation used throughout the paper, referring to Appendix \ref{appendix-notation} for more details  as well as proof of basic facts. 

 If $p$, $q \in \R^m$, $f: \R^m \to \R^k$, we write $f_p:= f(p)$ and $\delta f_{pq} := f_q - f_p$, $\delta_{pq} := q-p$.  We also  need the following notion of  integration of $1$-forms $f \d g $ along curves $\gamma: I \subseteq \R \to \Omega \subseteq \R^k$, provided that the regularity requirements for Young integration are satisfied. For example, if $f \in C^{\alpha}(\Omega; \R)$, $g \in C^{\beta}(\Omega; \R)$, with $\alpha+\beta>1$ and $\gamma \in C^1( I; \Omega)$, then $f \circ \gamma \in C^{\alpha}(I; \R)$, $g \circ \gamma \in C^{\beta}(I; \R)$ and so we define
\[ \int_{\gamma } f \d g := \int_I (f\circ \gamma) \, \d (g \circ \gamma).\]
As in the smooth case, the integral does not depend on the parametrization (except for the orientation). When $\gamma$ is a parametrization of an oriented segment from $p$ to $q$ (written $[pq] \subseteq \Omega$) we write $\int_{[pq]} f \d g :=  \int_{\gamma} f \d g$. In particular, this defines in a unique way the integral $\int_{\partial Q} f \d g$ of $f \d g$ on the boundary of an oriented rectangle $Q$ contained in $\Omega$. Precisely, we define an (oriented) rectangle $Q$ contained in $ \Omega$ (and write $Q\subseteq \Omega$) as an ordered triple of points $Q= [p; v_1, v_2]$, with $p$ standing for a ``base vertex'' and $v_1$,  $v_2$ for ``sides'' vectors, such that the convex envelope of  $\cur{p, p+v_1, p+v_2, p+v_1+v_2}$ is contained in $\Omega$. For simplicity, we also always require that $v_1$ and $v_2$ are parallel to vectors of the canonical basis of $\R^k$. The boundary of a rectangle $Q=[p; v_1, v_2]$ is defined as the formal sum of the four oriented segments $[p (p+v_1)]$, $[(p+v_1)(p+v_1+v_2)]$,  $[(p+v_1+v_2)(p+v_2)]$, $[(p+v_2)p]$ and $\int_{\partial Q} f \d g$ is then the sum of the four corresponding integrals. With a slight abuse of notation, sometimes we do not distinguish between the oriented rectangle $Q$ and the convex envelope of the four vertices, and write e.g., $q \in Q$ for a point in the convex envelope, or $\diam(Q)$ for its diameter.

\section{Differentiability with respect to a map}

\subsection{Derivatives with respect to a map}

We can rewrite problem~\eqref{eq:pfaff} in general space dimensions, i.e.\ for $\theta\colon I^m\to \R^d$, $g\colon I^m\to \R^k$ (in Section~\ref{sec_Intro1} we have the
 particular case $m=2$, $d=k=1$)  as in~\eqref{eq_pf_theta2}, i.e.,
\begin{equation}\label{eq:pfaff-germ} \delta \theta_{ p q } = f_{\bar{\theta}_p}   \delta g_{p q}  + o \bra{ \delta_{p q}}, \quad \text{for $p$, $q \in I^m$,}
\end{equation}
with $\bar{\theta}_p = (p, \theta_p)$ and $f_{\bar{\theta}_p}$ a $d\times k$ matrix. This suggests to restrict our investigation to maps $\theta$ which locally ``look like''  $g$, based on the validity of a Taylor expansion as in~\eqref{eq:pfaff-germ}. We formalize this notion in the following definition.

\begin{definition}[$g$-differentiable maps]
Let $g \in C(I^m; \R^k)$. We say that $\theta\colon I^m \to \R^d$ is 
(continuously) 
differentiable with respect to $g$, in brief, 
$g$-differentiable, if there exists a matrix-valued function
\[ \diff_g \theta = \bra{\partial_{g^i} \theta^j}_{i=1, \ldots, k}^{j=1, \ldots, d} \in C(I^m; \R^{d \times k}),\]
called $g$-derivative of $\theta$, such that
\begin{equation}\label{eq:g-differentiable} \delta \theta_{pq} =  (\diff_g \theta)_p  \delta g_{pq} + o( \delta_{pq} ) \quad \text{for $p$, $q \in I^m$.}\end{equation}
\end{definition}

\begin{remark}\label{rem_gdiff1}
The following easy assertions are valid.
\begin{itemize}
\item[(i)] A map $\theta\colon I^m \to \R^d$ is $C^1$, if and only if it is
$g$-differentiable with respect to $g(x):=x$, $g\colon I^m\to\R^m$ (in this case $k=m$) with $\diff_g \theta$ the usual differential (Jacobian) matrix of $\theta$.
\item[(ii)] Every $\theta \in C(I^m; \R^d)$ is clearly differentiable with respect to itself, i.e., $\theta$-differentiable, with $\diff_\theta \theta$ the identity $d\times d$ matrix (in this case $k=d$).  
\item[(iii)]  Uniqueness of the $g$-derivative is not true in general: consider for example a $C^1$ function $\theta\colon I\to \R$ and let $g(x) = (\theta(x), x)$ (seen a column), so that both 
the row matrix $(1,0)$ and the row matrix $(0,\theta'(x))$ with $\theta'$ the derivative of $\theta$,  
provide different $g$-derivatives of $\theta$. 
\item[(iv)] If for every $p\in I^m$ the differential of a scalar-valued $g$ in $p$ is nonzero
or does not exist
then the $g$-derivative of the map $\theta \in C(I^m; \R^d)$, if exists, is unique. In fact, if there are two, calling $v$ their 
difference, we have from~\eqref{eq:pfaff-germ} the relationship $v_p \delta g_{pq}= o(\delta_{pq})$ which,
minding that 
\[
\lim_{q_k\to p}  \frac{\delta g_{pq_k}}{\delta_{pq_k}}\neq 0
\]
for some sequence $\{q_k\}\subset I^n$, 
 is only possible when $v_p=0$.
\item[(vi)] For curves ($m=1$), the notion of $g$-differentiability 
can be compared to that of ``controlled''  paths first introduced  in~\cite{gubinelli_controlling_2004}. However, in that case, the definition 
of a controlled path $\theta\colon I\to \R^d$ requires e.g., $O(|\delta_{pq}|^{2 \alpha})$ instead of $o(\delta_{pq})$, when 
$g \in C^{\alpha}(I; \R)$ and $\alpha > 1/3$.
\end{itemize}
\end{remark}

\subsection{First order jets}

We may ask ourselves when for a given matrix-valued map $v$ and a vector valued map $g$ there is a $\theta$ such that $v = \diff_g \theta$. In the classical case $d=1$, $k=m$, $v\colon I^m\to \R^m$ smooth, and $g\colon I^m\to \R^m$ the identity map, this means that $v$ is a conservative vector field, and $\theta\colon I^m\to \R$ should be its potential (i.e.\ $v=\nabla \theta$, the classical gradient of $\theta$). The answer in this 
case can be given, for instance, in the following terms: the integral 
$\int_\gamma v\, \d x$ of the vector field $v$ (or, equivalently, of the differential form $v\,\d x$) 
along every closed smooth curve $\gamma$ in $I^m$ should vanish (of course, instead of testing the integrals over all closed curves, it is enough to take them over rectangles in $I^m$ with sides parallel to the coordinate axes).

For the general case of not necessarily smooth maps $v$ and $g$ such an answer is only possible when a suitable notion of the integral of the ``vector valued differential form'' $v\,\d g$ over closed curves is defined. This is the case, for instance, of H\"{o}lder maps $v \in C^{\alpha}(I^m; \R^{d\times k})$, $g \in C^{\beta}(I^m; \R^k)$ with $\alpha+\beta>1$, the respective integral being intended in the sense of Young. In particular, the 
following theorem, proven in Appendix~\ref{appendix-jets}, is valid.

\begin{theorem}[integration of $g$-jets]\label{thm:jets}
Let $0 \in I$ and let $\alpha$, $\beta \in (0,1]$, with $\alpha+\beta>1$, $g \in C^{\beta}(I^m; \R^k)$, $v \in C^{\alpha}(I^m; \R^{d\times k})$. 
Then, there exists a $\theta \in C(I^m; \R^d)$ such that $v = \diff_g \theta$, if and only if 
\begin{equation}\label{eq:g-jet}
\int_{\partial Q} v \d g = 0 
\end{equation}
for every rectangle $Q \subseteq I^m$ with sides parallel to the coordinate axes,
or, equivalently, 
\begin{equation}\label{eq:g-jet1gam}
\int_{\gamma} v \d g := \int_0^1  v(\gamma(t)) \d g(\gamma(t)) =0 
\end{equation}
for all closed Lispchitz curves $\gamma\colon [0,1]\to I^m$. 
In this case we call $v$ an \textit{$\alpha$-H\"older $g$-jet}, and write 
$v \in \jet^{\alpha}_g(I^m; \R^{d \times k})$. In such a case, the identity 
\begin{equation}\label{eq:integration-g-diff} \delta \theta_{pq} = \int_{[pq]} v\d g = \int_{[pq]} ( \diff_g \theta) \d g \quad \text{for $p$, $q \in I^m$}\end{equation}
holds, hence $\theta$ is uniquely determined up to an additive constant, and 
denoting by $\theta$ 
the  unique map such that~\eqref{eq:integration-g-diff} holds and $\theta_0 = 0$, we have that the operator
\[ v \in \jet^{\alpha}_g (I^m; \R^{d \times k}) \mapsto \theta \in C^{\beta}(I^m; \R^d)\]
is well-defined, linear and continuous 
(the space $\jet^{\alpha}_g (I^m; \R^{d \times k})$ being equipped with the component-wise H\"{o}lder norm
$\|\cdot\|_{C^\alpha}$). 
\end{theorem}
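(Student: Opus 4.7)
The proof is organized around the key identity
\[ \delta\theta_{pq} = \int_{[pq]} v\, \d g, \]
valid whenever $\theta$ is $g$-differentiable with $\diff_g\theta = v$. First I would establish this identity via a sewing-type uniqueness argument; the necessity of rectangle closure \eqref{eq:g-jet} and closed-curve closure \eqref{eq:g-jet1gam} then follow by telescoping, and sufficiency is obtained by constructing $\theta$ through axis-parallel path integrals.

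\textbf{The key identity.} Given a Lipschitz curve $\gamma\colon [0,1]\to I^m$, in particular a segment $\gamma(t)=p+t(q-p)$, consider the pair functions $I_{st}:=\int_s^t v(\gamma)\,\d g(\gamma)$ (Young integral) and $J_{st}:=\theta(\gamma(t))-\theta(\gamma(s))$, both additive in the sense $I_{st}=I_{sr}+I_{rt}$ and the same for $J$. Young's classical estimate and the $g$-differentiability hypothesis give, respectively,
\[ |I_{st}-v(\gamma(s))\delta g(\gamma)_{st}| \le C|t-s|^{\alpha+\beta}, \qquad |J_{st}-v(\gamma(s))\delta g(\gamma)_{st}|=o(|t-s|), \]
where the $o$ is uniform in $(s,t)$ by continuity of $\diff_g\theta$ on the compact image of $\gamma$. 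Telescoping $I-J$ along partitions of $[0,1]$ with vanishing mesh then forces $I\equiv J$; applied to a segment this yields the identity above, while applied to a closed Lipschitz curve it yields $\int_\gamma v\,\d g =\theta(\gamma(1))-\theta(\gamma(0))= 0$, covering both necessity statements.

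\textbf{Sufficiency.} Assuming \eqref{eq:g-jet}, define $\theta(p):=\int_{\sigma_p} v\,\d g$, where $\sigma_p$ is a fixed canonical axis-parallel staircase from $0$ to $p$. Rectangle closure for each pair of coordinate directions shows that the value is invariant under reordering of the axes used to build the staircase; consequently $\delta\theta_{pq}$ equals the integral along an axis-parallel staircase from $p$ to $q$. Applying Young's one-dimensional estimate on each of the $m$ coordinate-parallel legs and replacing values of $v$ at successive vertices by $v_p$ at cost $\|v\|_{C^\alpha}\,O(|q-p|^\alpha)$, one obtains
\[ \delta\theta_{pq} = v_p\,\delta g_{pq} + O(|q-p|^{\alpha+\beta}) = v_p\,\delta g_{pq}+o(\delta_{pq}), \]
since $\alpha+\beta>1$. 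Hence $\diff_g\theta=v$.

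\textbf{Regularity, continuity, and main obstacle.} Uniqueness of $\theta$ up to an additive constant is immediate from the identity. Young's inequality applied directly to $\delta\theta_{pq}=\int_{[pq]}v\,\d g$ yields $\theta\in C^\beta$ with $\|\theta\|_{C^\beta}\lesssim(\|v\|_\infty+\|v\|_{C^\alpha}\|g\|_{C^\beta})\|g\|_{C^\beta}$; linearity in $v$ gives continuity of the operator $v\mapsto\theta$. The principal technical obstacle is the sewing step: the definition of $g$-differentiability provides only pointwise $o(\delta_{pq})$, whereas the telescoping argument demands a \emph{uniform} modulus along the compact range of $\gamma$. This should be obtainable from continuity of $\diff_g\theta$ combined with a compactness argument, parallel to the standard proof that $C^1$ maps have uniform Taylor remainders on compact sets.
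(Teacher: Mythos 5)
Your proof is correct and follows essentially the same route as the paper: the exact identity $\delta\theta_{pq}=\int_{[pq]}v\,\d g$ (and vanishing on closed Lipschitz curves) is obtained by the same telescoping/dyadic-additivity argument the paper packages in Lemma~\ref{lem:dyadic}, Remark~\ref{rem:from-approximate-to-identity} and Lemma~\ref{lm_int_gdiff1}, and sufficiency is proved by the same axis-parallel staircase construction with rectangle closure giving path-independence and the Young estimate \eqref{eq:young-curves-germs} giving $\delta\theta_{pq}=v_p\,\delta g_{pq}+O(|\delta_{pq}|^{\alpha+\beta})$; the $C^\beta$ bound and continuity of $v\mapsto\theta$ are likewise as in the paper (your constant is cruder than \eqref{eq:integration-g-jet-2} but that is immaterial).

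One remark on the point you single out as the ``principal technical obstacle''. The uniformity of the remainder is not something to be recovered afterwards: in this germ formalism the $o(\delta_{pq})$ in \eqref{eq:g-differentiable} is read uniformly in $(p,q)$, exactly as in the hypothesis of Lemma~\ref{lem:dyadic}, so the telescoping step needs no extra argument. By contrast, the fix you sketch would not work if one insisted on a merely pointwise little-$o$: the classical fact that pointwise differentiability with continuous derivative yields uniform Taylor remainders on compacts rests on the mean value theorem (equivalently, on integrating the derivative back up), and no such tool is available for $g$-derivatives --- indeed recovering $\theta$ from $\diff_g\theta$ by integration is precisely what the theorem is establishing, so invoking it there would be circular. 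Simply adopt the uniform reading of the definition and drop that paragraph.
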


\begin{remark}
We notice that, under conditions of Theorem~\ref{thm:jets},
if~\eqref{eq:integration-g-diff} holds, then 
by the fundamental estimate of Young integrals~\eqref{eq:young-curves-germs} we obtain
\begin{equation}\label{eq:integration-g-jet-2} |\delta \theta_{pq} - (\diff_g \theta)_p \cdot \delta g_{pq} | \le  \c [\delta g]_{\beta} [\delta (\diff_g \theta)]_{\alpha} | \delta_{pq} |^{\alpha+\beta} = O( |\delta_{pq}|^{\alpha+\beta}) \quad \text{for $p$, $q \in I^m$,}\end{equation}
for $\c = \c(m, \alpha, \beta)$, hence improving the $o(\delta_{pq})$ term in~\eqref{eq:g-differentiable}. In particular, this yields that \eqref{eq:g-jet1gam} also holds for $\gamma \in C^{\sigma}([0,1]; I^m)$, with  $\sigma(\alpha+\beta)>1$, by  approximation of H\"older curves with Lipschitz ones and continuity of Young integral.
\end{remark}

\subsection{Chain rule for derivatives}

As a simple consequence of~\eqref{eq:integration-g-jet-2}, we get the following
chain rule (a precise and more general statement of it is provided by Proposition~\ref{prop:chain-rule}).

\begin{corollary}[chain rule]
If $f$ is $h$-differentiable and $h \circ \theta$ is $g$-differentiable, then $f\circ \theta$ is $g$-differentiable, with 
\begin{equation}\label{eq_chain_rule_isotr1}
\diff_g (f \circ \theta) = (\diff_h f)_{\theta} \diff_g (h\circ \theta),
\end{equation}
provided that  $\diff_h f \in C^\alpha$, $h \in C^\beta$ and $\theta \in C^\sigma$, for $\alpha$, $\beta$, $\sigma \in (0,1]$, with $\sigma(\alpha+\beta) >1$. 
\end{corollary}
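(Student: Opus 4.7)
The plan is to expand $\delta(f\circ\theta)_{pq}$ by first substituting the $h$-expansion of $f$ at the points $\theta_p$, $\theta_q$, and then the $g$-expansion of $h\circ\theta$; however, the raw $o(|\delta_{xy}|)$ remainder from $h$-differentiability of $f$, once pulled back along the Hölder map $\theta$, only gives $o(|\delta_{pq}|^{\sigma})$, which for $\sigma<1$ is generally too weak. The pivotal step is therefore to quantify that remainder via Theorem~\ref{thm:jets}.

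First observe that, since $\sigma\le 1$, the hypothesis $\sigma(\alpha+\beta)>1$ forces $\alpha+\beta>1$, so Theorem~\ref{thm:jets} applies to the $h$-differentiable map $f$: its continuous derivative $\diff_h f \in C^\alpha$ is automatically an $\alpha$-Hölder $h$-jet, and the sharpened bound \eqref{eq:integration-g-jet-2} yields the improved expansion
\[ \delta f_{xy} = (\diff_h f)_x \delta h_{xy} + O(|\delta_{xy}|^{\alpha+\beta}), \]
for $x$, $y$ in the domain of $f$, with implicit constant controlled by $[\diff_h f]_\alpha$ and $[h]_\beta$.

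Next, I specialise to $x=\theta_p$, $y=\theta_q$ for $p$, $q\in I^m$. Using $\theta\in C^\sigma$, the remainder is absorbed as
\[ |\delta_{\theta_p\theta_q}|^{\alpha+\beta} \le [\theta]_\sigma^{\alpha+\beta}\,|\delta_{pq}|^{\sigma(\alpha+\beta)} = o(|\delta_{pq}|), \]
thanks to $\sigma(\alpha+\beta)>1$. Since $\delta h_{\theta_p\theta_q}=\delta(h\circ\theta)_{pq} = (\diff_g(h\circ\theta))_p\,\delta g_{pq} + o(|\delta_{pq}|)$ by the assumed $g$-differentiability of $h\circ\theta$, and since $(\diff_h f)_{\theta_p}$ is locally bounded by continuity of both $\diff_h f$ and $\theta$, chaining the two expansions gives
\[ \delta(f\circ\theta)_{pq} = (\diff_h f)_{\theta_p}(\diff_g(h\circ\theta))_p\,\delta g_{pq} + o(|\delta_{pq}|), \]
which is exactly the $g$-expansion required by the definition. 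The candidate $g$-derivative $(\diff_h f)_\theta\,\diff_g(h\circ\theta)$ is continuous on $I^m$ as a product of continuous matrix-valued factors, establishing \eqref{eq_chain_rule_isotr1}.

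The only non-routine point is the quantitative upgrade of the $h$-expansion of $f$ via Theorem~\ref{thm:jets}; once this is in hand, the chain rule follows by straightforward substitution, and the numerical condition $\sigma(\alpha+\beta)>1$ enters exactly as the threshold that keeps the composed remainder of order $o(|\delta_{pq}|)$.
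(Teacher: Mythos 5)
Your proposal is correct and follows essentially the same route as the paper: both upgrade the $h$-expansion of $f$ to an $O(|\delta_{xy}|^{\alpha+\beta})$ remainder via \eqref{eq:integration-g-jet-2} (using $\alpha+\beta>1$, which you rightly note follows from $\sigma\le 1$ and $\sigma(\alpha+\beta)>1$), compose with $\theta\in C^\sigma$ to make it $o(|\delta_{pq}|)$, and then insert the $g$-expansion of $h\circ\theta$. Your explicit remarks on the local boundedness of $(\diff_h f)_\theta$ and continuity of the candidate derivative are minor clarifications of steps the paper leaves implicit.
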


\begin{proof}
The estimate~\eqref{eq:integration-g-jet-2} (with $f$ and $h$ in place of $\theta$ and $g$ respectively)
implies
\[
 \delta f_{pq} =  (\diff_h f)_p  \delta h_{pq} + O( |\delta_{pq}|^{\alpha+\beta} ), 
\]
for $p,q$ in the domain of definition of $f$,
and hence
\begin{align*}
 (\delta f\circ\theta)_{st} & =(\diff_h f)_{\theta_s}  \cdot \delta h_{\theta_s\theta_t} + 
O\left( |\delta_{\theta_s\theta_t}|^{\alpha+\beta} \right)\\
& = (\diff_h f)\circ\theta_{s}  \left(\delta (h\circ\theta)\right)_{st} + O\left( |\delta_{st}|^{\sigma(\alpha+\beta)} \right)\\
&=  ((\diff_h f)\circ\theta)_{s}  \left(\diff_g (h\circ\theta)_{s} \delta g_{st} + o(\delta_{st}) \right)
+O\left( |\delta_{st}|^{\sigma(\alpha+\beta)} \right)\\
& = ((\diff_h f)\circ\theta)_{p}  \diff_g (h\circ\theta)_{p} \delta g_{st}+ o(\delta_{st}),
\end{align*}
for $t,s$ in the domain of definition of $\theta$, as claimed.
\end{proof}

Let us consider the following examples. 

\begin{example}[composition]\label{ex:chain-rule}
Let $f\colon J^n \to \R^d$ be differentiable in the classical sense, 
$D f = \diff_{\mbox{id}}f \in C^{\alpha}(J^n; \R^{d \times n})$ and let $\theta = g \in C^{\beta}(I^m; \R^n)$,
with $\beta(1+\alpha)>1$, 
we deduce from~\eqref{eq_chain_rule_isotr1} with $h:=\mbox{id}$ using 
$\mbox{id}\circ \theta = g$ and Remark~\ref{rem_gdiff1}
that $f\circ g$ is $g$-differentiable with $\diff_g (f \circ g) = (D f) \circ g$.
\end{example}

\begin{example}[composition with graphs]\label{ex:anisotropic}
This is in fact an application of Proposition~\ref{prop:chain-rule}, but we state it here for later use. Given  $g \in C^{\beta}(I^m; \R^k)$, define
\[ h_{(x^m, x^n) } :=  (g_{x^m}, x^n), \quad \text{ for $x =(x^m, x^n) \in I^{m +n}$,}\]
 let $f\colon I^{m+n} \to \R^d$ be $h$-differentiable, with $\diff_h f = (\diff_g f, \diff_{x^n} f) \in C^{\alpha}( I^{m+n} ; \R^{d \times( k + n)})$, and  $\theta \in C^{\beta}(I^m; \R^n)$ be $g$-differentiable. Letting $p \mapsto \bar{\theta}_p := (p, \theta_p)$ be its graph, we have that $h \circ\bar{ \theta} = (g, \theta)$ is also $g$-differentiable, with $\diff_g ( h \circ\bar{ \theta}) = (\Id, \diff_g \theta)$, hence we conclude by Proposition~\ref{prop:chain-rule} that $f_{ \bar{\theta}}$ is $g$-differentiable, with
\begin{equation}\label{eq:chain-rule-graph} \diff_g f_{ \bar{\theta}} =(\diff_{g} f)_{\bar{\theta}} + (\diff_{x^n} f)_{\bar{\theta}} \diff_g \theta.\end{equation}
\end{example}

\subsection{Jets and $g$-differentiable maps}

Let now $g = (g^i)_{i=1}^k\colon I^m\to \R^k$. We are interested in knowing when  a given $g$-differentiable map
$v = (v^i)_{i=1}^k \colon I^m\to \R^k$, 
is a $g$-derivative of
some real valued function $\theta \in C(I^m)$. 
 If  $v$ and $g$ are H\"{o}lder continuous, a sufficient condition for the positive answer is given 
by the following result. 

\begin{proposition}\label{prop:zust-1}
Let $\alpha$, $\beta \in (0,1]$, with $\alpha+2\beta>2$, $g =(g^i)_{i=1}^k\in C^{\beta}(I^m; \R^k)$. If $v  = (v^i)_{i=1}^k \in C^\beta(I^m; \R^k)$ is $g$-differentiable, with $\diff_g v \in C^{\alpha}(I^m; \R^{k\times k})$ and for every $i  \neq j$ either 
\begin{equation}\label{eq_frob1cond1}
\partial_{g^i} v^j = \partial_{g^j} v^i,
\end{equation}
 or for every rectangle $Q \subseteq I^m$ with sides parallel to coordinate axes one has
\begin{equation}\label{eq:wedge-null} 
\int_{\partial Q} g^i \d g^j = 0, 
\end{equation} 
then $v \in \jet^{\beta}_g(I^m; \R^k)$, so that in particular, by 
Theorem~\ref{thm:jets}, $v=\diff_g\theta$ for some $\theta\in C^{\beta}(I^m)$.
\end{proposition}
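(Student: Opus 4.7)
By Theorem~\ref{thm:jets}, it suffices to prove $\int_{\partial Q} v \d g = 0$ for every axis-parallel rectangle $Q \subseteq I^m$. My plan is to fix such a $Q$ and, for each $N \in \mathbb{N}$, partition it into an $N \times N$ grid of subrectangles $\{Q^N_{\bm s}\}$ of diameter $h_N \sim 1/N$, so that by cancellation of internal edges
\[
\int_{\partial Q} v \d g = \sum_{\bm s}\sum_{j=1}^k \int_{\partial Q^N_{\bm s}} v^j \d g^j.
\]
On each $Q^N_{\bm s}$ with base vertex $p_{\bm s}$ I expand
\[
v^j(x)-v^j(p_{\bm s}) = \sum_i (\partial_{g^i} v^j)(p_{\bm s})\,\delta g^i_{p_{\bm s} x} + R^{\bm s}_j(x),
\]
and the preliminary task is to upgrade the pointwise $o(\delta)$ remainder from $g$-differentiability to the uniform quantitative bound $|R^{\bm s}_j(x)| \le C |x - p_{\bm s}|^{\alpha+\beta}$. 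I will obtain this by restricting everything to the segment $[p_{\bm s} x]$: the chain rule of the preceding subsection shows $v^j \circ \gamma$ is $(g\circ\gamma)$-differentiable along any smooth parametrization $\gamma$, and the one-dimensional case of Theorem~\ref{thm:jets} (trivially satisfied, since $\R^1$ admits no nondegenerate closed curves) identifies $\delta v^j_{p_{\bm s} x} = \int_{[p_{\bm s} x]} \diff_g v^j \d g$; Young's fundamental estimate, legitimate because $\diff_g v \in C^\alpha$, $g \in C^\beta$ and $\alpha+\beta > 1$ (forced by $\alpha+2\beta > 2$ and $\beta \le 1$), then yields the claimed bound on $R^{\bm s}_j$.

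Substituting this expansion and observing that constants integrate to zero along the closed boundary, I obtain
\[
\int_{\partial Q^N_{\bm s}} v^j \d g^j = \sum_i (\partial_{g^i} v^j)(p_{\bm s}) \int_{\partial Q^N_{\bm s}} g^i \d g^j + \int_{\partial Q^N_{\bm s}} R^{\bm s}_j \d g^j.
\]
The algebraic heart of the argument is that the principal sum vanishes subrectangle by subrectangle. For $i=j$, Young's chain rule yields $\int_{\partial Q^N_{\bm s}} g^j \d g^j = \tfrac12 \int_{\partial Q^N_{\bm s}} \d((g^j)^2) = 0$. For $i \ne j$ I pair $(i,j)$ with $(j,i)$ and use the Young integration-by-parts identity $\int_{\partial Q^N_{\bm s}} g^i \d g^j + \int_{\partial Q^N_{\bm s}} g^j \d g^i = \int_{\partial Q^N_{\bm s}} \d (g^i g^j) = 0$ to collapse the pair to $[\partial_{g^i} v^j - \partial_{g^j} v^i](p_{\bm s}) \cdot \int_{\partial Q^N_{\bm s}} g^i \d g^j$; this vanishes by the dichotomy in the hypothesis, since either the coefficient is zero (symmetry) or the integral is zero by~\eqref{eq:wedge-null} (as $Q^N_{\bm s}$ is itself axis-parallel).

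The main obstacle is the refined remainder estimate
\[
\Bigl| \int_{\partial Q^N_{\bm s}} R^{\bm s}_j \d g^j \Bigr| \le C\, h_N^{\alpha + 2\beta};
\]
once this is in hand, summing over the $N^2$ subrectangles gives $|\int_{\partial Q} v \d g| \le C N^{2-(\alpha+2\beta)} \to 0$, which is exactly where the regularity threshold $\alpha + 2\beta > 2$ enters. The difficulty is that a naive Young closed-loop estimate treating $R^{\bm s}_j$ only as an element of $C^\beta$ yields $O(h_N^{2\beta})$, whose $N^2$-fold sum blows up for $\beta < 1$; the extra factor $h_N^\alpha$ has to be extracted from the structural information that $R^{\bm s}_j$ is itself $g$-differentiable, with \emph{small} $g$-derivative $\diff_g v^j(\cdot) - \diff_g v^j(p_{\bm s})$ of size $O(h_N^\alpha)$ on $Q^N_{\bm s}$. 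In the $2$-form language of~\cite{stepanov_sewing_17}, this amounts to a Stokes-type reduction of the boundary integral to
\[
\sum_i \iint_{Q^N_{\bm s}} \bigl[\partial_{g^i} v^j - (\partial_{g^i} v^j)(p_{\bm s})\bigr] \,\d g^i \wedge \d g^j,
\]
where the H\"older $2$-form integral is controlled by (small $C^\alpha$ coefficient of size $\sim h_N^\alpha$) $\times\, C[g^i]_\beta [g^j]_\beta h_N^{2\beta}$, producing exactly the required bound. Once $v \in \jet^\beta_g$ is established, the existence of $\theta \in C^\beta(I^m)$ with $v = \diff_g \theta$ follows by Theorem~\ref{thm:jets}.
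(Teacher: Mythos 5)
Your overall strategy is sound and, in its skeleton, coincides with the paper's own proof: both arguments reduce, via additivity of $Q\mapsto\int_{\partial Q}v\,\d g$ (your $N\times N$ grid is just a quantitative version of Lemma~\ref{lem:dyadic}), to showing a bound of order $\diam(Q)^{\alpha+2\beta}$ on small rectangles; both freeze the coefficients $\partial_{g^i}v^j$ at a base point and kill the frozen principal term by exactly the cancellation you describe (pairing $(i,j)$ with $(j,i)$, Young integration by parts $\int_{\partial Q}\d(g^ig^j)=0$, and the dichotomy \eqref{eq_frob1cond1}/\eqref{eq:wedge-null}); this is precisely the paper's identity \eqref{eq:stokes-zero}. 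Your preliminary upgrade of the $o(\delta)$ remainder to $O(|\delta|^{\alpha+\beta})$ via restriction to segments and the Young estimate is also correct (it is \eqref{eq:integration-g-jet-2}). Where you genuinely diverge is the remainder estimate: the paper avoids two-dimensional integrals altogether by integrating by parts at the outset, $\int_{\partial Q}v\,\d g=-\int_{\partial Q}g\,\d v$, converting $\d v$ into $(\diff_g v)\,\d g$ through Remark~\ref{rem:from-approximate-to-identity} (identity \eqref{eq:stokes-without-zust}), and then extracting the gain $\diam(Q)^{\alpha+2\beta}$ purely from the one-dimensional closed-loop Young bound \eqref{eq:young-boundary}, the smallness coming from $g-g_{\bar p}$ and the $C^\alpha$ oscillation of $\diff_g v$. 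You instead keep $\int_{\partial Q^N_{\bm s}}R^{\bm s}_j\,\d g^j$ and outsource the gain to the Z\"ust/rough-Stokes machinery of \cite{zust_integration_2011,stepanov_sewing_17}, which the paper deliberately refrains from using; granting that machinery, your bound $(h_N^{\alpha})\cdot h_N^{2\beta}$ is the right one and the conclusion follows.

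The one step you assert rather than prove is the ``Stokes-type reduction'' itself: Z\"ust's Stokes theorem gives $\int_{\partial Q}R\,\d g^j=\int_Q \d R\wedge\d g^j$ (legitimate since $2\beta>1$), but you additionally need the identification $\int_Q \d R\wedge\d g^j=\sum_i\int_Q\bigl(\partial_{g^i}v^j-(\partial_{g^i}v^j)_{p_{\bm s}}\bigr)\,\d g^i\wedge\d g^j$ for a function $R$ that is merely $g$-differentiable with $C^\alpha$ derivative. This is true, but it is not a statement of the cited references as such; it requires a short uniqueness/sewing argument at the level of germs (both germs differ by $O(|\delta|^{\alpha+2\beta})$ with $\alpha+2\beta>2$), i.e.\ a two-dimensional analogue of \eqref{eq:stokes-without-zust}. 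Alternatively you can close this point entirely within the paper's one-form toolkit: integrate by parts, $\int_{\partial Q^N_{\bm s}}R^{\bm s}_j\,\d g^j=-\int_{\partial Q^N_{\bm s}}g^j\,\d R^{\bm s}_j$, replace $\d R^{\bm s}_j$ by $(\diff_g R^{\bm s}_j)\,\d g=\sum_i\bigl(\partial_{g^i}v^j-(\partial_{g^i}v^j)_{p_{\bm s}}\bigr)\d g^i$ via Remark~\ref{rem:from-approximate-to-identity}, and apply \eqref{eq:young-boundary} — which is, up to working with $v$ instead of $R$, exactly the paper's computation. So: correct idea and essentially the same architecture, with the final estimate run through the two-form theory instead of the paper's elementary integration-by-parts route, and with that reduction step left as an unproved (though fillable) appeal to external machinery.
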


Note that with the Stokes' theorem from~\cite{stepanov_sewing_17} at hand we could reformulate~\eqref{eq:wedge-null} as
\[
\int_{Q} \d g^i \wedge \d g^j = 0
\]
for every rectange $Q$, i.e.\ just symbolically
\[
\d g^i \wedge \d g^j = 0.
\]
We postpone the proof of Proposition~\ref{prop:zust-1} to Appendix~\ref{appendix-jets}, 
but will give its rough (though only formal) idea here.

\begin{proof}[Idea of the proof of Proposition~\ref{prop:zust-1}:]
In view of Theorem~\ref{thm:jets} it is enough to show that $v$ is
a $g$-jet, that is, when $\int_{\partial Q} v \d g = 0$
for every rectangle $Q \subseteq I^m$ with sides parallel to the coordinate axes.
A purely formal application of calculus rules yields the identities
\[  
\int_{\partial Q} v \cdot \d g = \int_Q \d v \wedge \d g = \sum_{i < j} \int_Q  (\partial_{g^i} v^j - \partial_{g^j} v^i) \d g^i \wedge \d g^j,
\]
where we used (formally) the Stokes' theorem, the $g$-differentiability of $v$ 
and the antisymmetry of ``differential $2$-forms'' 
$\d g^i \wedge \d g^j = - \d g ^j \wedge \d g^i$ 
(yielding in particular $\d g^i \wedge \d g^i= 0$). 
Thus, a sufficient condition for the left-hand side integral to vanish is that, for every $i\neq j$ either~\eqref{eq_frob1cond1} holds, or
$\d g^i \wedge \d g^j = 0$, 
the  latter condition being equivalent to~\eqref{eq:wedge-null} (this formulation
avoids two-dimensional integrals which we did not introduce here, although one could have 
used the integrals introduced by R.\ Zust in~\cite{zust_integration_2011} or equivalently those in \cite{stepanov_sewing_17}). Note that 
the requirement $\alpha+2\beta>2$ may be seen as necessary for all the  two-dimensional integrals above in fact exist in the sense of R.\ Zust. 
\end{proof}


%
%

Note that in the classical case $k=m$ and $g$ identity, 
$\d g^i\wedge \d g^j=\d x^i\wedge \d x^j\neq 0$ when $i\neq j$, so that~\eqref{eq:wedge-null}
is never valid, and Proposition~\ref{prop:zust-1} provides 
just the condition~\eqref{eq_frob1cond1} for $v$ to be a gradient. The latter condition
becomes just $\partial_{x^i} v^j = \partial_{x^j} v^i$ for all $i\neq j$, that is,
the classical curl-free condition $\nabla\times v=0$. In other words, in the general situation~\eqref{eq_frob1cond1} can be seen as just a natural extension of curl-free condition. On the contrary,~\eqref{eq:wedge-null} is essentially new and absent in the classical case. The following lemma provides examples of $g$ such that~\eqref{eq:wedge-null} holds.

\begin{lemma}\label{lem:examples-dg-dg-null}
Let $\alpha$, $\beta$, $\sigma \in (0,1]$, with $\alpha+2 \beta>2$ and $2 \beta \sigma >1$, $w \in C^{\beta}(I^m; \R)$, $h \in C^{\beta}(I^m; \R^n)$ be $w$-differentiable, with $\diff_w h \in C^{\alpha}(I^m; \R^n)$, and $f \in \C^{\sigma}(\R^n; \R^k)$. Then, $g := f \circ h \in C^{\beta \sigma}(I^m; \R^k)$ satisfies~\eqref{eq:wedge-null}. 
\end{lemma}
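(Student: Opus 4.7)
The regularity claim $g = f\circ h \in C^{\beta\sigma}(I^m;\R^k)$ is immediate from the standard Hölder composition estimate $|\delta g_{pq}| \le [f]_\sigma\,|\delta h_{pq}|^\sigma \le [f]_\sigma[h]_\beta^\sigma|\delta_{pq}|^{\beta\sigma}$, and the assumption $2\beta\sigma > 1$ ensures that each Young integral $\int_{[pq]} g^i\,\d g^j$, hence $\int_{\partial Q} g^i\,\d g^j$, is well-defined. The substance of the lemma is therefore the vanishing assertion.

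My plan is first to approximate $f$ by smooth functions so as to reduce to the case where $f$ is smooth, and then to exploit the scalar nature of $w$ to force the resulting integral to vanish. For the approximation step, let $f_\epsilon := f*\rho_\epsilon$ be a standard mollification; then $f_\epsilon$ is smooth, $[f_\epsilon]_\sigma \le [f]_\sigma$, and by interpolation $f_\epsilon \to f$ in $C^{\sigma'}_{\mathrm{loc}}$ for every $\sigma' < \sigma$. Choosing $\sigma' < \sigma$ still with $2\beta\sigma' > 1$, the same composition estimate yields $g_\epsilon := f_\epsilon\circ h \to g$ in $C^{\beta\sigma'}(I^m;\R^k)$, and continuity of the Young integral in Hölder norms gives
\[
\int_{\partial Q} g_\epsilon^i\,\d g_\epsilon^j \;\longrightarrow\; \int_{\partial Q} g^i\,\d g^j.
\]
It therefore suffices to prove the vanishing under the additional assumption that $f$ is smooth.

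Assuming $f$ smooth, the chain rule (Example~\ref{ex:chain-rule}) gives that $g$ is $w$-differentiable with $\diff_w g^j = (Df^j)_h\cdot\diff_w h$, and combining a second-order Taylor expansion of $f$ with the refined estimate \eqref{eq:integration-g-jet-2} for the $w$-differentiable $h$ yields the enhanced expansion
\[
\delta g^j_{pq} \;=\; (\diff_w g^j)_p\,\delta w_{pq} \;+\; O\bra{|\delta_{pq}|^\nu}, \qquad \nu := \min(\alpha+\beta,\,2\beta) > 1,
\]
the strict inequality $\nu > 1$ being precisely the condition $\alpha + 2\beta > 2$. Because the remainder strictly exceeds the diameter, a Young/sewing change of variable along each side of $\partial Q$ converts the boundary integral into one driven by the scalar integrator $w$:
\[
\int_{\partial Q} g^i\,\d g^j \;=\; \int_{\partial Q} g^i\cdot \diff_w g^j\,\d w.
\]

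The remaining and most delicate step is to establish $\int_{\partial Q} g^i\cdot \diff_w g^j\,\d w = 0$. Heuristically this is forced by the scalar identity $\d w\wedge \d w = 0$: by the $2$-form chain rule, $\d g^i\wedge \d g^j = \sum_{k,l}(\partial_k f^i\,\partial_l f^j)(h)\,\d h^k\wedge \d h^l$, and the $w$-differentiability of $h$ makes each $\d h^k$ proportional to $\d w$, so every $\d h^k\wedge \d h^l$ vanishes. The main obstacle is to give this argument rigor at the available Hölder regularity; the cleanest route is to invoke the Stokes theorem for the two-dimensional Hölder integration of~\cite{stepanov_sewing_17} (developing~\cite{zust_integration_2011}), which directly provides $\int_{\partial Q} g^i\,\d g^j = \int_Q \d g^i\wedge \d g^j = 0$. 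Staying within the $1$-form formalism preferred in this paper, the same conclusion can be reached by a $2$D sewing argument: the reduced integral $\int_{\partial Q'} g^i\cdot \diff_w g^j\,\d w$ is additive over axis-parallel subdivisions of $Q$, and combining the enhanced expansion above (with error exponent $\nu > 1$) with the fact that $\d w$ supplies only one "rough" direction provides a small-rectangle estimate $|\int_{\partial Q'} g^i\cdot \diff_w g^j\,\d w| = o(\mathrm{diam}(Q')^2)$, which by iterated subdivision forces the vanishing. Passing $\epsilon\to 0$ completes the proof.
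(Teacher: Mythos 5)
Your overall skeleton (mollify $f$ to reduce to the smooth case, then use the $w$-differentiability of $g=f\circ h$ to replace the integrator $\d g^j$ by $(\diff_w g^j)\,\d w$, and finally argue on small rectangles and conclude by dyadic additivity) is the same as the paper's, and your approximation step is fine — if anything more carefully stated than in the paper, since you only claim convergence in $C^{\sigma'}$ for $\sigma'<\sigma$ with $2\beta\sigma'>1$. The gap is in the decisive last step. The estimate $\abs{\int_{\partial Q'} g^i\,(\diff_w g^j)\,\d w}=o(\diam(Q')^2)$ does \emph{not} follow from the enhanced expansion $\delta g^j_{pq}=(\diff_w g^j)_p\,\delta w_{pq}+O(|\delta_{pq}|^{\nu})$ with $\nu>1$ together with ``$\d w$ supplies only one rough direction'': if you expand $g^i$ and $\diff_w g^j$ around a corner $\bar p$ of $Q'$ and feed everything into the Young boundary estimate~\eqref{eq:young-boundary}, the remainder terms give at best $O\bra{\diam(Q')^{\min(\alpha+2\beta,\,3\beta)}}$, and the hypotheses only guarantee $\beta>\tfrac12$, so $3\beta$ may well be $\le 2$ (e.g.\ $\alpha=0.95$, $\beta=0.55$). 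In the same vein, the frozen term $(\diff_w g^j)_{\bar p}\int_{\partial Q'}g^i\,\d w$ is a priori only $O(\diam(Q')^{2\beta})$, which is never $o(\diam^2)$. So an extra, exact cancellation is indispensable, and your proposal never identifies it. (Your alternative of invoking the Stokes theorem of~\cite{stepanov_sewing_17} does not close the gap either: it converts the claim into $\int_Q \d g^i\wedge\d g^j=0$, and the ``$2$-form chain rule'' you would then need at this H\"older regularity is essentially the statement to be proven.)

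The missing ingredient, which is the heart of the paper's proof, is that any $w$-differentiable scalar function with $C^{\alpha}$ $w$-derivative is automatically a $w$-\emph{jet} when $\alpha+2\beta>2$: this is Proposition~\ref{prop:zust-1} applied with the scalar map $w$ in place of $g$ (the conditions on pairs $i\neq j$ are then vacuous, the relevant cancellation being $\int_{\partial Q}w\,\d w=\tfrac12\int_{\partial Q}\d(w^2)=0$), and via Theorem~\ref{thm:jets} it yields the \emph{exact} identity $\int_{\partial Q'} g^i\,\d w=0$, not merely an $O(\diam^{2\beta})$ bound. With this in hand the paper's argument is: normalize $g^i_{\bar p}=0$, split $\int_{\partial Q'}g^i(\diff_w g^j)\,\d w=\int_{\partial Q'}g^i\bra{\diff_w g^j-(\diff_w g^j)_{\bar p}}\d w+(\diff_w g^j)_{\bar p}\int_{\partial Q'}g^i\,\d w$; the second term vanishes identically by the jet property, and the first is $O(\diam(Q')^{\alpha+2\beta})=o(\diam(Q')^2)$ by~\eqref{eq:young-boundary}. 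This freezing-plus-exact-vanishing mechanism is exactly where the heuristic $\d w\wedge\d w=0$ is made rigorous, and it is what your ``$2$D sewing argument'' would need to contain; as written, the small-rectangle estimate is asserted rather than proved, and the naive bounds available from your expansion alone are insufficient in the full range of exponents allowed by the lemma.
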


The proof of the above Lemma~\ref{lem:examples-dg-dg-null} provided in Appendix~\ref{appendix-jets}, essentially relies on the fact that, for a real-valued $w$, 
one (formally) has $\d w \wedge \d w = \d (w \d w) = \frac 1 2 \d (\d w^2) = 0$, hence if all the components $(g^i)_{i=1}^k$  locally ``look like'' $w$, one may conclude that~\eqref{eq:wedge-null} holds.

\begin{remark}\label{rm_zust_tree1}
It is not clear whether all maps $g$ satisfying \eqref{eq:wedge-null} have necessarily to be in the form $g = f\circ h$ as in the above lemma. A  weaker result of this ilk however holds due to theorems~1.2 and~1.1 from~\cite{zust_2015}. Namely, if
$m=k=2$, i.e. $g\colon \R^2\to\R^2$ satisfies~\eqref{eq:wedge-null} then there is a metric tree $T$, i.e.\ a complete metric space such that every couple of its points is connected by a unique arc (a continuous injective image of the unit interval),
and surjective map $f \colon \R^2\to T$ and $h \colon T\to g(\R^2)\subset \R^2$ such that $g=f \circ h$. Since $T$ is a one-dimensional space in a natural sense, then it means, informally, that the components $g^1$ and $g^2$ depend on ``a single coordinate'' (i.e.\ the point on $T$) that we may regard as an analogue of $w$ in Lemma~\ref{lem:examples-dg-dg-null}.
\end{remark}

At last we remark that even when $v$ is not a $g$-jet, in some cases we can ``correct it'' 
by an adding a ``corrector'' map 
to obtain a $g$-jet as the following proposition shows.


\begin{proposition}[``correctors'' and jets]\label{prop:compensated1}
Let $0 \in I$ and let $\alpha$, $\beta \in (0,1]$, with $\alpha+2\beta>2$, $g \in C^{\beta}(I^2; \R^2)$ with $g_{(s,t)} =( g^1_s, g^2_{(s,t)} )$ for $(s,t) \in I^2$. Let $v  = (v^1, v^2) \in C^{\beta}(I^2; \R^2)$ be $g$-differentiable with $\diff_g v \in C^{\alpha}(I^2; \R^2)$ and define, for $(s,t) \in I^2$,
\[ \mathfrak{v}_{(s,t)} := \int_{[(s, 0) (s, t) ] } \bra{ \partial _{g^1} v^2 -  \partial_{g^2} v^1}  \d g^2.\]
Then $(v^1 - \mathfrak{v}, v^2) \in \jet^{\alpha+\beta-1}_g(I^2)$.
More precisely, $\mathfrak{v}$ has the following anisotropic regularity:
\[ \mathfrak{v}_{(s,\cdot)} \in  C^{\beta}(I; \R), \quad \text{and} \quad \mathfrak{v}_{(\cdot,t)} \in  C^{\alpha+\beta-1}(I; \R) \quad \text{for $s$, $t \in I$.}\]
\end{proposition}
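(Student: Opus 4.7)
The plan is to apply Theorem~\ref{thm:jets}: it suffices to verify the closed-integral identity
\[ \int_{\partial Q} \bra{(v^1 - \mathfrak{v}) \d g^1 + v^2 \d g^2} = 0 \]
for every axis-parallel rectangle $Q \subseteq I^2$, and separately to establish the anisotropic H\"older regularity of $\mathfrak{v}$. That regularity implies $\mathfrak{v} \in C^{\alpha+\beta-1}(I^2;\R)$ (combining the two directional bounds, since $\alpha+\beta-1 \le \beta$), so $v^1 - \mathfrak{v} \in C^{\alpha+\beta-1}$, which together with $v^2 \in C^\beta$ and the assumption $\alpha + 2 \beta > 2$ gives exactly $\beta + (\alpha + \beta - 1) > 1$, the Young exponent condition of Theorem~\ref{thm:jets}.

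For the jet identity I split
\[ \int_{\partial Q} \bra{(v^1 - \mathfrak{v}) \d g^1 + v^2 \d g^2} = \int_{\partial Q} \bra{v^1 \d g^1 + v^2 \d g^2} - \int_{\partial Q} \mathfrak{v} \d g^1. \]
The first term, by the reasoning already sketched for Proposition~\ref{prop:zust-1}, equals $\int_Q (\partial_{g^1} v^2 - \partial_{g^2} v^1) \d g^1 \wedge \d g^2$, interpreted either via the $2$D Stokes theorem of~\cite{stepanov_sewing_17} or directly by a Riemann-sum manipulation using that $\d g^1 = (\partial_s g^1) \d s$ kills the vertical edges of $\partial Q$. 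The second term, for the same reason, collapses to two horizontal Young integrals,
\[ \int_{\partial Q} \mathfrak{v} \d g^1 = \int_{s_0}^{s_1} \bra{\mathfrak{v}_{(s,t_0)} - \mathfrak{v}_{(s,t_1)}} \d g^1(s), \]
writing $Q = [(s_0,t_0);(s_1-s_0,0),(0,t_1-t_0)]$. By the definition of $\mathfrak{v}$, the inner bracket is the Young integral of $(\partial_{g^1} v^2 - \partial_{g^2} v^1)$ against $\d g^2$ along the vertical segment from $(s,t_0)$ to $(s,t_1)$, and a Fubini-type identification in the sewing framework of~\cite{stepanov_sewing_17} rewrites the whole expression as $\pm \int_Q (\partial_{g^1} v^2 - \partial_{g^2} v^1) \d g^1 \wedge \d g^2$, with the sign arranged by the definition of $\mathfrak{v}$ so that the two pieces cancel.

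For the regularity of $\mathfrak{v}$, set $w := \partial_{g^1} v^2 - \partial_{g^2} v^1 \in C^{\alpha}(I^2)$. That $\mathfrak{v}_{(s,\cdot)} \in C^{\beta}$ uniformly in $s$ is immediate from the basic Young estimate applied to $\int_0^t w(s,\tau) \d g^2(s,\tau)$, using boundedness of $w$ and uniform $C^\beta$-control of $g^2(s,\cdot)$. For the $s$-direction I decompose
\[ \mathfrak{v}_{(s_1,t)} - \mathfrak{v}_{(s_0,t)} = \int_0^t [w(s_1,\tau) - w(s_0,\tau)] \d g^2(s_1,\tau) + \int_0^t w(s_0,\tau) \d h(\tau), \]
where $h(\tau) := g^2(s_1,\tau) - g^2(s_0,\tau)$. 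The first summand is $O(|s_1-s_0|^\alpha)$ via the $C^\alpha$-oscillation of $w$ in $s$, which is acceptable since $\alpha \ge \alpha+\beta-1$. For the second, one interpolates between $\|h\|_\infty \lesssim |s_1-s_0|^\beta$ and $[h]_\beta \le 2 [g^2]_\beta$ to control the $C^{\beta'}$-seminorm of $h$ by $|s_1-s_0|^{\beta-\beta'}$ for any $\beta' \in (0,\beta)$, and then applies the Young estimate with $\beta'$ chosen just above $1-\alpha$, producing the target bound $|s_1-s_0|^{\alpha+\beta-1}$.

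The main obstacle is precisely this interpolation step, responsible for the full $s$-exponent $\alpha+\beta-1$ rather than the naive $\alpha$. The gain is genuinely two-dimensional and relies on the smallness of the integrator increment $h$ in sup-norm competing with its $C^\beta$-bound, which is the mechanism at the heart of $2$D Zust/sewing integration~\cite{stepanov_sewing_17}; the rigorous appendix proof is likely to invoke that framework to handle both the jet identity and this regularity estimate cleanly.
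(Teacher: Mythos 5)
Your overall architecture (split off $\int_{\partial Q}\mathfrak{v}\,\d g^1$, exploit that $g^1$ depends on $s$ only, then cancel against the curl term) is the same formal picture the paper itself sketches, but the two steps that carry the whole weight are not proved. The equality $\int_{\partial Q}\bra{v^1\d g^1+v^2\d g^2}=\int_Q(\partial_{g^1}v^2-\partial_{g^2}v^1)\,\d g^1\wedge\d g^2$ needs both a rigorous Stokes theorem for rough $2$-forms \emph{and} the substitution $\d v^i=\partial_{g^1}v^i\,\d g^1+\partial_{g^2}v^i\,\d g^2$ inside a two-dimensional integral, and the identification of $\int_{s_0}^{s_1}\bra{\int_{t_0}^{t_1}w\,\d g^2(s,\cdot)}\d g^1(s)$ with $\int_Q w\,\d g^1\wedge\d g^2$ is exactly the ``Fubini'' step that the paper labels as purely formal; none of these is available in the present paper, which deliberately works with one-dimensional Young integrals only. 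The actual proof in Appendix~\ref{sec_proof_compensated1} avoids all of this: it shows that the dyadically additive functional $F(Q):=\int_{\partial Q}(v^1-\mathfrak{v})\d g^1+v^2\d g^2$ is $o(\diam(Q)^2)$ and concludes by Lemma~\ref{lem:dyadic}, performing the substitutions only at the level of $1$-dimensional integrals via Remark~\ref{rem:from-approximate-to-identity} (as in \eqref{eq:stokes-without-zust}) and replacing the Fubini identification by direct estimates of $\int_{s_0}^{s_1}\delta\bra{hg^2-\mathfrak{v}}_{(s,t_0)(s,t_1)}\d g^1(s)$, $h=\partial_{g^1}v^2-\partial_{g^2}v^1$, through \eqref{eq:young}--\eqref{eq:young-germ} with mixed-increment (two-variable) interpolation. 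That quantitative work is the heart of the proof, and your proposal defers it to unestablished external lemmas, so as it stands the jet identity is asserted rather than proved.

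The regularity part also has two concrete problems. First, the claim that $\int_0^t\bra{w(s_1,\tau)-w(s_0,\tau)}\d g^2(s_1,\tau)=O(|s_1-s_0|^{\alpha})$ ``via the $C^\alpha$-oscillation of $w$ in $s$'' is unjustified: a Young integral is not controlled by the sup norm of the integrand alone; by \eqref{eq:young-germ} only the leading term $\bra{w(s_1,0)-w(s_0,0)}\delta g^2$ enjoys that bound, while the remainder requires $[\delta u]_{\alpha'}$ with $\alpha'+\beta>1$ for $u(\tau)=w(s_1,\tau)-w(s_0,\tau)$, and making that seminorm small in $|s_1-s_0|$ forces the same mixed-increment interpolation you use for the second summand, with the corresponding loss of exponent. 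Second, that interpolation, with $\beta'$ ``just above'' $1-\alpha$, gives $|s_1-s_0|^{\beta-\beta'}$ where $\beta-\beta'<\alpha+\beta-1$ (and the constant $\c(\alpha,\beta')$ in \eqref{eq:young-germ} blows up as $\beta'\downarrow 1-\alpha$), so your argument yields only $C^{(\alpha+\beta-1)-\eps}$ regularity in $s$ for every $\eps>0$, not the stated exponent $\alpha+\beta-1$; reaching the endpoint requires a scale-by-scale (dyadic/sewing) estimate of the germ defect $\delta w\,\delta h$ using $\min\cur{|s_1-s_0|^\beta,|\tau-\tau'|^\beta}$ at each scale, rather than a single global interpolation. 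These quantitative bounds are also what make $\int_{\partial Q}\mathfrak{v}\,\d g^1$ well-defined in the first place, so they cannot be treated as an afterthought to the cancellation argument.
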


Again we postpone the proof of Proposition~\ref{prop:compensated1} to Appendix~\ref{sec_proof_compensated1}, 
providing here only its rough and purely formal idea.

\begin{proof}[Formal idea of the proof of Proposition~\ref{prop:compensated1}:]
For  $Q = [s^0, s^1] \times[t^0, t^1] \subseteq I^2$ we calculate
\[\begin{split}
 \int_{\partial Q} v \cdot \d g & = \int_{ Q} v^1   \d g^1 + v^2 \d g^2\\ & = \int_{\partial Q} 1 \d v^1 \wedge \d g^1 + 1 \d v^2  \wedge \d g^2\quad \text{by Stokes' theorem}\\
 & = \int_{Q} (-\partial_{g^2} v^1 + \partial_{g^1} v^2) \d g^1 \wedge \d g^2  \quad \text{since $v$ is $g$-differentiable} \\
  & = \int_{s^0}^{s^1}\d g^1_s  \int_{t^0}^{t^1} (-\partial_{g^2} v^1 + \partial_{g^1} v^2)_{(s,t)} \d g^2_{(s, \cdot)} (t)    \quad \text{by Fubini theorem}\\
 & = \int_{s^0}^{s^1} \bra{ \mathfrak{v}_{(s,t^1)}  - \mathfrak{v}_{(s,t^0)} } \d g^1(s) = \int _{\partial Q} \mathfrak{v} \d g^1. \qedhere
 \end{split}\]
\end{proof}

\section{An implicit function theorem}


In this section, we show that a version of the implicit function theorem holds, for $h$-differentiable maps, provided that $h$ has a suitable ``product'' structure: we assume indeed that
\begin{equation}\label{eq:implicit-h} h\colon I^{m+n} \to \R^{k+n}, \quad h_x := (g_{x^m}, x^n), \quad \text{for $x = (x^m, x^n) \in I^{m+n}$,}\end{equation}
for some $g \in  C^{\beta}(I^{m}; \R^k)$. Given an $h$-differentiable map $f\colon I^{m+n} \to  \R^n$, writing $\diff_h f = (\diff_g f, \diff_{x^n} f)$, we say that a point $x_0  \in I^{m+n}$ is \emph{non-degenerate} if $(\diff_{x^n} f)_{x_0}$ is invertible. The structure of level sets $f^{-1}(f_{x_0})$ at non-degenerate points is described in the following result.

\begin{theorem}[implicit function theorem]\label{thm:dini}
Let $\beta, \gamma \in (0,1]$ with $\beta(1+ \gamma) >1$, $g \in C^{\beta}(I^{m} ; \R^k)$, let $h$ be as in~\eqref{eq:implicit-h}, $f\colon I^{m+n}  \to \R^n$ be $h$-differentiable, with $\diff_h f \in C^{\gamma}( I^{m+n} ; \R^{n \times(k+ n)})$ and $x_0 \in I^{m+n}$ be non-degenerate. Then, there exist open sets $J^m$, $J^n$ such that $x_0 \in J^m \times J^n\subseteq I^{m+n}$ and a unique $\theta\colon J^m \to  I^n$ such that
\[ f^{-1}(f_{x_0})\cap J^{m}\times J^n = \bar{\theta}(J^m),\]
where $\bar{\theta}_p := (p, \theta_p)$.
Moreover, $\theta$ is $g$-differentiable, with 
\begin{equation}\label{eq:g-dini} \diff_g \theta = - (\diff_{x^n} f)_{\bar{\theta}}^{-1} (\diff_g f)_{\bar{\theta}} \in C^{\beta \gamma}(J^m; \R^{n \times k}). \end{equation}
\end{theorem}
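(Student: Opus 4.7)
The plan is to combine a pointwise-in-$p$ application of the classical implicit function theorem (valid since $y\mapsto f_{(p,y)}$ turns out to be classically $C^1$) with an \emph{upgraded} Young-type Taylor expansion for $f$, from which the $g$-differentiability of $\theta$ and the formula~\eqref{eq:g-dini} will be read off directly. After translation I may assume $x_0=0$ and $f_{x_0}=0$; post-composing $f$ with $(\diff_{x^n}f)_{x_0}^{-1}$ (which affects neither the level set nor the conclusion) I may further arrange $(\diff_{x^n}f)_{x_0}=\Id$.

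\textbf{Step 1 (uniform Taylor bound for $f$).} Since $f$ is single-valued, $\sum_i \delta f_{p_iq_i}=0$ around any closed polygonal path; summing the germ identity $\delta f_{pq} = (\diff_h f)_p\delta h_{pq} + o(|\delta_{pq}|)$ over finer and finer partitions of $\partial Q$ then shows $\int_{\partial Q}(\diff_h f)\,\d h=0$, so $\diff_h f$ is an $h$-jet in the sense of Theorem~\ref{thm:jets}. The hypothesis $\beta(1+\gamma)>1$ entails $\beta+\gamma>1$, so estimate~\eqref{eq:integration-g-jet-2} applied to $f$ furnishes the uniform bound
\[
|\delta f_{pq} - (\diff_h f)_p \cdot \delta h_{pq}| \leq \c \, |\delta_{pq}|^{\beta+\gamma}
\]
on a fixed neighborhood of $x_0$.

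\textbf{Step 2 (existence, regularity, formula for $\theta$).} For each fixed $p$ the germ specialized to displacements in $x^n$ only shows that $y\mapsto f_{(p,y)}$ is classically $C^1$ with continuous invertible derivative $(\diff_{x^n}f)_{(p,y)}$ near $x_0^n$, so the classical implicit function theorem applied pointwise in $p$ yields open $J^m, J^n$ and a unique continuous $\theta\colon J^m\to J^n$ with $f_{(p,\theta_p)}=0$. Plugging $\bar\theta_p,\bar\theta_q$ into the Taylor bound from Step~1 and using $f_{\bar\theta_p}=f_{\bar\theta_q}$ yields
\[
-(\diff_{x^n}f)_{\bar\theta_p}(\theta_q-\theta_p) = (\diff_g f)_{\bar\theta_p}\delta g_{pq} + R, \qquad |R|\leq \c(|\delta_{pq}|+|\theta_q-\theta_p|)^{\beta+\gamma},
\]
and the self-improving inequality $|\theta_q-\theta_p|\leq \c|\delta_{pq}|^\beta + \c|\theta_q-\theta_p|^{\beta+\gamma}$ then forces $\theta\in C^\beta(J^m;J^n)$ on a small enough neighborhood, using $\beta+\gamma>1$ to absorb the nonlinear term. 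Rewriting the same identity as
\[
\delta\theta_{pq} = -(\diff_{x^n}f)^{-1}_{\bar\theta_p}(\diff_g f)_{\bar\theta_p}\delta g_{pq} + O(|\delta_{pq}|^{\beta+\gamma})
\]
and identifying the error as $o(|\delta_{pq}|)$ yields $g$-differentiability of $\theta$ together with~\eqref{eq:g-dini}; composition of the $C^\gamma$ maps $(\diff_g f)$, $(\diff_{x^n}f)^{-1}$ with $\bar\theta\in C^\beta$ finally gives $\diff_g\theta\in C^{\beta\gamma}$.

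\textbf{Main obstacle.} The only technically delicate point is the upgrade from the pointwise germ to the uniform Young-type bound in Step~1, i.e.\ the fact that $\diff_h f$ is automatically an $h$-jet for a single-valued $f$; once that is in hand, the rest of the argument is essentially classical, with $\beta(1+\gamma)>1$ entering only through the ensuing inequality $\beta+\gamma>1$, needed both to close the self-improvement in Step~2 and to identify $O(|\delta_{pq}|^{\beta+\gamma})=o(|\delta_{pq}|)$.
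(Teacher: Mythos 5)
Your reduction steps, the pointwise use of the classical implicit function theorem (legitimate, since $y\mapsto f_{(p,y)}$ is genuinely $C^1$ with jointly continuous invertible derivative $(\diff_{x^n}f)_{(p,y)}$), and the self-improving estimate giving $\theta\in C^\beta$ are all fine, and your Step 1 (that $\diff_h f$ is automatically an $h$-jet, hence the improved Young bound holds) is correct. The genuine gap is in the passage to $g$-differentiability. The bound you carry out of Step 1 is \emph{isotropic}: $\abs{\delta f_{xy}-(\diff_h f)_x\delta h_{xy}}\le \c\,\abs{\delta_{xy}}^{\beta+\gamma}$. Evaluated at $x=\bar\theta_p$, $y=\bar\theta_q$ with $\theta$ only $C^\beta$, the full increment satisfies $\abs{\delta_{\bar\theta_p\bar\theta_q}}\le C\abs{\delta_{pq}}^{\beta}$, so what you actually control is a remainder of order $\abs{\delta_{pq}}^{\beta(\beta+\gamma)}$, not the $O(\abs{\delta_{pq}}^{\beta+\gamma})$ written in your rewritten identity. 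The hypothesis $\beta(1+\gamma)>1$ does \emph{not} imply $\beta(\beta+\gamma)>1$: for $\beta=3/4$, $\gamma=1/2$ one has $\beta(1+\gamma)=9/8>1$ but $\beta(\beta+\gamma)=15/16<1$. Hence the error term cannot be identified as $o(\delta_{pq})$, and the proof of~\eqref{eq:g-dini} breaks exactly at the decisive point.

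The paper avoids this by an \emph{anisotropic} remainder estimate that exploits the product structure~\eqref{eq:implicit-h}: by Proposition~\ref{prop:chain-rule} (applied with $\beta_1=\beta$, $\beta_2=1$) one gets~\eqref{eq:estimate-remainder}, namely $\abs{\varrho_{xy}}\le \c\,[\delta(\diff_h f)]_{\gamma}\bra{[\delta g]_\beta\abs{\delta_{x^m y^m}}^{\beta}+\abs{\delta_{x^n y^n}}}\abs{\delta_{xy}}^{\gamma}$. Along the graph the horizontal part of $\delta h_{\bar\theta_p\bar\theta_q}$ is exactly $\delta g_{pq}$, of size $[\delta g]_\beta\abs{\delta_{pq}}^{\beta}$ (a full $\beta$-power of $\abs{\delta_{pq}}$, not of $\abs{\delta_{pq}}^{\beta}$), which yields $\varrho_{\bar\theta_p\bar\theta_q}=O(\abs{\delta_{pq}}^{\beta(1+\gamma)})=o(\delta_{pq})$ — this is precisely where $\beta(1+\gamma)>1$ enters. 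Your argument is repairable by replacing the isotropic bound of Step 1 with this anisotropic one; your Hölder self-improvement and the final composition giving $\diff_g\theta\in C^{\beta\gamma}$ then go through. As a side remark, your existence/uniqueness route via the parametric classical IFT differs from the paper's construction (a Banach fixed point in a ball of $C^\beta$ maps, built from differences of the Taylor expansion at $x_0$), and appears workable, but it does not by itself supply the quantitative control needed in the differentiability step.
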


A detailed proof of this result is provided in Appendix~\ref{appendix-dini}. As in the classical implicit function theorem, we rely on a fixed point argument, using the improved error estimates~\eqref{eq:integration-g-jet-2}. Let us point out that the fixed point map is not directly induced by~\eqref{eq:g-dini}, instead it is built by choosing suitable  differences between Taylor expansion at the point $x_0$. The validity of~\eqref{eq:g-dini}, in particular the fact that the right hand side therein is a $g$-jet, follows a posteriori.

As an application of Theorem~\ref{thm:dini}, we provide a partial answer to the following natural question, converse to the chain rule (Example~\ref{ex:chain-rule}): when $g$-differentiable functions $f$ are necessarily obtained via pointwise composition $f = F \circ g$? 

\begin{example}When, $m=1$, the example of a Young integral
\[ f_t = \int_{0}^t h_s \d g_s, \quad \text{for $t \in I$,}\]
shows that this is not always the case, since $f$ depends non-locally 
on $g$, e.g., by modifying $g$ only around $0$, the values of $f_t$ may change also for large  $t$'s. Still, the fundamental estimate of Young integrals \eqref{eq:young-germ} gives that $f$ is $g$-differentiable with $\diff_g f = h$.
\end{example}

To provide a sufficient condition ensuring that it must be $f = F \circ g$ for $g$-differentiable $f$'s, it is sufficient to argue that $f$ is constant on any level set $\cur{g = c}$. From~\eqref{eq:g-differentiable}, letting $p$, $q \in \cur{g = c}$, we would deduce that
\[ \delta f_{pq} = (\diff_g f)_p \cdot  \delta g_{pq} + o(\delta_{pq})  = o ( \delta_{pq}).\]
Therefore, if $p$ and $q$ can be connected via a continuous curve $\theta$ with values in $\cur{g= c}$, one has
\[(\delta f)_{ \theta_s \theta_t} = o ( \delta \theta_{st}), \]
which leads (Lemma~\ref{lem:dyadic}) to $\delta f_{\theta} = 0$, provided that $o( \delta \theta_{st}) = o(\delta_{st})$. Hence, such argument holds whenever one has a sufficiently precise parametrization of the level sets of $g$: in the next proposition, whose proof can be found in Appendix~\ref{appendix-dini},  we give a precise statement (notice that we slightly shift the notation from the discussion above, following instead that of Theorem~\ref{thm:dini}).

\begin{proposition}[$f$-differentiable maps induced by composition]\label{prop:g-induced-composition}
Let $\beta$, $\gamma$, $g$, $h$, $f$ and $x_0$ be as in Theorem~\ref{thm:dini}. If $\varphi\colon I^{m+n} \to \R^k$ is $f$-differentiable, with $\diff_f \varphi \in C^{\gamma}(I^{m+n}; \R^{k \times n})$, then there exists $J^m$, $J^n$ with $x_0 \in J^m \times J^n \subseteq I^{m+n}$  and $\Phi\colon \R^n \to \R^k$ such that
\[ \varphi = \Phi \circ f \quad \text{on $J^{m} \times J^n$.}\]
\end{proposition}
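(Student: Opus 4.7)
My plan is to use the implicit function theorem (Theorem~\ref{thm:dini}) to foliate a neighbourhood of $x_0$ by graphs of $g$-differentiable maps that parameterize the nearby level sets of $f$, and then to prove that $\varphi$ is constant on each leaf. Defining $\Phi(y)$ as the common value of $\varphi$ on $f^{-1}(y)$ will then give the required factorization $\varphi = \Phi \circ f$.

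First I apply Theorem~\ref{thm:dini} at $x_0$ to obtain open sets $J^m$, $J^n$ with $x_0 \in J^m \times J^n$ and a $g$-differentiable $\theta\colon J^m \to J^n$ parameterizing $f^{-1}(f_{x_0}) \cap (J^m \times J^n)$ as the graph $\bar\theta(J^m)$. Since $x \mapsto (\diff_{x^n} f)_x$ is continuous and invertible at $x_0$, after possibly shrinking $J^m$ and $J^n$ every point of $J^m \times J^n$ is non-degenerate. Applying Theorem~\ref{thm:dini} at each such point and using its uniqueness part to glue the local parameterizations, I obtain, after further shrinking if needed, that for every $y \in f(J^m \times J^n)$ the level set $f^{-1}(y) \cap (J^m \times J^n)$ is the graph of a $g$-differentiable $\theta^y\colon J^m \to J^n$. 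Write $\eta^y(s) := (s, \theta^y(s))$, so $\eta^y = \bar{\theta^y}$.

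Fix $y$ and let $F := \varphi \circ \eta^y\colon J^m \to \R^k$. Since $f \circ \eta^y \equiv y$, this composition is trivially $g$-differentiable with vanishing $g$-derivative, so the chain rule (applied to $\varphi$ which is $f$-differentiable, and to $f \circ \eta^y$ which is $g$-differentiable) yields that $F$ is $g$-differentiable with
\[ \diff_g F = (\diff_f \varphi)_{\eta^y} \, \diff_g(f \circ \eta^y) = 0,\]
whence $\delta F_{st} = o(|\delta_{st}|)$ uniformly on $J^m$. A dyadic sewing argument (Lemma~\ref{lem:dyadic}) then forces $F \equiv F(s_0)$ for any fixed base point $s_0 \in J^m$. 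Setting $\Phi(y) := \varphi(s_0, \theta^y(s_0))$ and extending arbitrarily to the rest of $\R^n$, any $x \in J^m \times J^n$ lies on the graph of $\theta^{f(x)}$, so $\varphi(x) = F(s_0) = \Phi(f(x))$, as desired.

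The main obstacle is securing the uniform $o(|\delta_{st}|)$ bound for $\delta F_{st}$. Because $\eta^y$ inherits only the $\beta$-H\"older regularity of $\theta^y$, a direct invocation of the chain rule of Section~2 would demand the Young-type exponent condition $\beta(\beta+\gamma) > 1$, strictly stronger than the standing hypothesis $\beta(1+\gamma) > 1$. To bridge this gap one has to combine the improved error estimate~\eqref{eq:integration-g-jet-2} for $\varphi$ (exploiting that $\diff_f \varphi$ is an $f$-jet) with the structural identity~\eqref{eq:g-dini}, namely $(\diff_{x^n} f)_{\eta^y} \diff_g \theta^y + (\diff_g f)_{\eta^y} = 0$: when one expands $\delta \varphi_{\eta^y_s \eta^y_t}$ via the $f$-differentiability of $\varphi$ and then substitutes the Young expansion of $\delta h$ along $\eta^y$ using~\eqref{eq:g-dini}, the principal contributions cancel, and the surviving remainder meets precisely the threshold prescribed by $\beta(1+\gamma) > 1$. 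Once this sharpened estimate is in hand, the dyadic lemma delivers $F \equiv \mathrm{const}$ and the factorization follows.
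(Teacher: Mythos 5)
Your overall route is the same as the paper's: apply Theorem~\ref{thm:dini} to parameterize the level sets near $x_0$ as graphs of $g$-differentiable maps, show that $\varphi$ is constant on each leaf by combining the $f$-differentiability expansion of $\varphi$ with the vanishing of $\delta f$ along a leaf, conclude constancy via dyadic additivity (Lemma~\ref{lem:dyadic}), and define $\Phi$ as the common value on each leaf. The paper only writes out the computation on the single leaf $\bar\theta(J^m)$ through $x_0$ and leaves the foliation of the neighbourhood (your family $\theta^y$, obtained by applying Theorem~\ref{thm:dini} at nearby non-degenerate points and gluing by uniqueness) implicit, so your first two paragraphs are a faithful, somewhat more explicit version of its argument.

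Where your write-up is not yet a proof is the final estimate, and your diagnosis of the difficulty is only half right. You correctly observe that the isotropic bound, remainder $O(|\delta_{\eta^y_s\eta^y_t}|^{\gamma+\beta})=O(|\delta_{st}|^{\beta(\beta+\gamma)})$, gives $o(|\delta_{st}|)$ only when $\beta(\beta+\gamma)>1$, which does not follow from $\beta(1+\gamma)>1$ (take $\beta=0.7$, $\gamma=0.5$); in fact the paper's own two-line proof quietly uses exactly this crude bound, so the subtlety you flag is genuine. But the repair you sketch is mis-aimed: there is no ``principal contribution'' left to cancel via~\eqref{eq:g-dini}, since on a leaf $\delta f_{\eta^y_s\eta^y_t}=0$ identically, so the term $(\diff_f\varphi)_{\eta^y_s}\,\delta f_{\eta^y_s\eta^y_t}$ is already zero; the whole problem sits in the remainder, and~\eqref{eq:g-dini} does not improve it. What closes the gap is the anisotropic control of $f$ coming from its $h$-differentiability, i.e.\ the estimate~\eqref{eq:estimate-remainder} (Proposition~\ref{prop:chain-rule} with $\beta_1=\beta$, $\beta_2=1$): locally $|\delta f_{uv}|\le C\bigl(|\delta_{u^m v^m}|^{\beta}+|\delta_{u^n v^n}|\bigr)$, hence along the chord $\sigma$ joining $\eta^y_s$ to $\eta^y_t$ one gets $[\delta(f\circ\sigma)]_{\beta}\le C|\delta_{st}|^{\beta}$ (not $|\delta_{st}|^{\beta^2}$), while $[\delta((\diff_f\varphi)\circ\sigma)]_{\gamma}\le C|\delta_{st}|^{\beta\gamma}$. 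Writing $\delta\varphi_{\eta^y_s\eta^y_t}=\int_{[\eta^y_s\,\eta^y_t]}\diff_f\varphi\,\d f$ (legitimate by Theorem~\ref{thm:jets}, since $\gamma+\beta>1$) and applying the Young estimate~\eqref{eq:young-germ} to the composed functions on $[0,1]$ yields $|\delta\varphi_{\eta^y_s\eta^y_t}|\le C|\delta_{st}|^{\beta(1+\gamma)}=o(|\delta_{st}|)$, exactly at the stated threshold; after this your dyadic argument and the definition of $\Phi$ go through unchanged.
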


\section{Frobenius theorems}

Given $g\colon I^m \to \R^k$ and $f\colon I^m \times \R^k \to \R^{d \times k}$, 
 we provide sufficient conditions ensuring existence and uniqueness of  $\theta\colon I^m \to \R^d$ such that~\eqref{eq:pfaff-germ} holds 
with $\theta_{p_0} = \vartheta$, for given $p_0 \in I^m$, $\vartheta \in \R^d$.

\begin{remark}[rough exterior differential systems]
The relationship \eqref{eq:pfaff-germ}, seen as a problem for $\theta$, can be intepreted as an exterior differential system, by defining the following ``rough differential forms'' on $I^m \times \R^d$,
\[ \omega^j := \d x^{m+i}- \sum_{\ell = 1}^k f^j_{\ell} \d g^\ell \quad \text{for $j \in \cur{1, \ldots, d}$,}\]
and finding its H\"older integral manifold $\bar{\theta}: I^m \to I^m \times \R^d$, i.e., such that  $\bar{\theta}_{p_0} = (p_0, \vartheta)$ and
\begin{eqnarray}
\label{eq:eds-1}\bar\theta^*\,\d x^i &= \d s^i,  \quad \text{for $i \in \cur{1, \ldots, m}$,}\\
\label{eq:eds-2}\bar\theta^*\,\omega^j &=0,  \quad \text{for $j \in \cur{1, \ldots, d}$.}
\end{eqnarray}
The solution $\bar{\theta}$ to \eqref{eq:eds-1} \eqref{eq:eds-2} will be given by $\bar{\theta}_{s}= (p_0 + s, \theta_s)$ where $\theta$ satisfies \eqref{eq:pfaff-germ}.
\end{remark}

 As anticipated in the introduction, we can equivalently restate~\eqref{eq:pfaff-germ} as a (Young) integral equation, which is here justified as a consequence of Theorem~\ref{thm:jets}.

\begin{lemma}\label{lem:equivalence-pfaff}
Let $\beta$, $\gamma \in (0,1]$ with $\beta(1+\gamma)>1$, let \[ g \in C^{\beta}(I^m; \R^k), \quad f \in C^{\gamma}(I^m \times \R^k; \R^{d \times k}), \quad \theta \in C(I^m; \R^d),\]
with $\theta_{p_0} = \vartheta \in I^m$. Then,~\eqref{eq:pfaff-germ} with $\bar{\theta}_p :=(p, \theta_p)$ holds if and only either of the  following conditions hold:
\begin{enumerate}
\item the identity $\delta \theta_{pq} =  \displaystyle\int_{[pq]} f_{\bar{\theta}} \cdot \d g$ holds, for every $p$, $q \in I^m$,
\item $\theta \in C^{\beta}(I^m; \R^k)$ is $g$-differentiable with $\diff_g \theta = f_{\bar{\theta}} \in C^{\beta \gamma}(I^m; \R^{d \times k})$,
\item there exists a $v \in \jet^{\beta \gamma}(I^m; \R^{d \times k})$ such that $\theta_p = \vartheta + \int_{[p_0p]} v \cdot \d g$ and
\[ f_{\bar{\theta}_p} = v_p \quad \text{for every $p \in I^m$.}\]
\end{enumerate}
\end{lemma}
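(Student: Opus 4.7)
The plan is to establish the cycle \eqref{eq:pfaff-germ}$\Longrightarrow$(2)$\Longrightarrow$(3)$\Longrightarrow$(1)$\Longrightarrow$\eqref{eq:pfaff-germ}, from which equivalence of all four formulations follows at once. The first implication is the only delicate one, since a priori $\theta$ is merely continuous and must be upgraded to $C^\beta$. Since $\theta$ is continuous and $f\in C^\gamma$, the composition $f_{\bar\theta}$ is continuous, hence bounded on a sufficiently small neighbourhood $U\subseteq I^m$ of $p_0$; on $U$ the expansion \eqref{eq:pfaff-germ} immediately gives $|\delta\theta_{pq}|\le C|\delta g_{pq}|+o(|\delta_{pq}|)\le C'|\delta_{pq}|^\beta$ for $p$, $q$ sufficiently close, and a standard covering/chaining argument propagates this local H\"older estimate to all of $I^m$. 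Consequently $\bar\theta\in C^\beta$ and $f_{\bar\theta}\in C^{\beta\gamma}(I^m;\R^{d\times k})$ by composition of H\"older maps with exponents $\beta$ and $\gamma$, so that \eqref{eq:pfaff-germ} is literally the statement that $\theta$ is $g$-differentiable with $\diff_g\theta=f_{\bar\theta}$, which is (2).

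Given (2), set $v:=f_{\bar\theta}\in C^{\beta\gamma}(I^m;\R^{d\times k})$. The forward direction of Theorem~\ref{thm:jets} then places $v$ in $\jet^{\beta\gamma}_g(I^m;\R^{d\times k})$ and, via the integration identity \eqref{eq:integration-g-diff}, yields $\delta\theta_{pq}=\int_{[pq]}v\cdot\d g$ for all $p,q\in I^m$; in particular $\theta_p=\vartheta+\int_{[p_0p]}v\cdot\d g$, which combined with $v_p=f_{\bar\theta_p}$ is exactly (3). The step (3)$\Longrightarrow$(1) is then immediate: Theorem~\ref{thm:jets} produces $\tilde\theta_p:=\vartheta+\int_{[p_0p]}v\cdot\d g$ satisfying $\delta\tilde\theta_{pq}=\int_{[pq]}v\cdot\d g$ (this is precisely the path-independence inherent in being a $g$-jet), and the defining identity of (3) forces $\tilde\theta=\theta$, whence $\delta\theta_{pq}=\int_{[pq]}f_{\bar\theta}\cdot\d g$.

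Finally, (1)$\Longrightarrow$\eqref{eq:pfaff-germ} follows directly from the fundamental Young estimate \eqref{eq:integration-g-jet-2}: once (1) is in force, the same bootstrap as in the first paragraph yields $\theta\in C^\beta$ and $f_{\bar\theta}\in C^{\beta\gamma}$, so that
\[
\left|\int_{[pq]}f_{\bar\theta}\cdot\d g-f_{\bar\theta_p}\,\delta g_{pq}\right|=O\!\left(|\delta_{pq}|^{\beta(1+\gamma)}\right),
\]
and the hypothesis $\beta(1+\gamma)>1$ renders the right-hand side $o(|\delta_{pq}|)$, as required. The main technical obstacle is thus concentrated in the bootstrap used in the first implication: because $f$ is only continuous and depends nonlinearly on the unknown $\theta$, the H\"older bound on $\theta$ must be obtained by a local argument near $p_0$ and patched together, rather than read off globally from a uniform bound on $f_{\bar\theta}$. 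All remaining implications reduce to straightforward applications of Theorem~\ref{thm:jets} and the Young-integral estimate \eqref{eq:integration-g-jet-2}.
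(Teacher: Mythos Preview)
Your proposal is correct and follows essentially the same route as the paper: both arguments rest on Theorem~\ref{thm:jets} and the Young estimate \eqref{eq:young-curves-germs} to pass between the four formulations. The paper organises the logic as pairwise equivalences (\eqref{eq:pfaff-germ}$\Leftrightarrow$(2) by definition, (2)$\Leftrightarrow$(1) via \eqref{eq:young-curves-germs}, then (1)$\Rightarrow$(3)$\Rightarrow$(2) via Theorem~\ref{thm:jets}), while you close a single cycle; the content is identical.

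One minor point: you overcomplicate the bootstrap in \eqref{eq:pfaff-germ}$\Rightarrow$(2). In this paper, $f\in C^\gamma$ means $\|f\|_\gamma=[f]_0+[\delta f]_\gamma<\infty$, so $f$ is \emph{globally} bounded and $|f_{\bar\theta_p}|\le[f]_0$ for every $p\in I^m$, not merely near $p_0$. The expansion \eqref{eq:pfaff-germ} then gives $|\delta\theta_{pq}|\le[f]_0[\delta g]_\beta|\delta_{pq}|^\beta+o(|\delta_{pq}|)$ uniformly, hence $\theta\in C^\beta(I^m)$ at once, with no covering or chaining needed. Your closing assessment that ``$f$ is only continuous'' and that the H\"older bound must be obtained locally and patched together is therefore inaccurate under the paper's conventions; this is precisely why the paper treats \eqref{eq:pfaff-germ}$\Leftrightarrow$(2) as ``plainly equivalent''.
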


\begin{proof}
The proof is straightforward: the validity of \eqref{eq:pfaff-germ} is indeed what inspired the general definition of $g$-differentiability, so that condition \emph{(2)} is plainly equivalent to it. Young integration along segments and the fundamental estimate \eqref{eq:young-curves-germs} shows the equivalence with condition \emph{(1)}, which easily implies \emph{(3)}. In turn, \emph{(3)} implies $g$-differentiability, i.e.\ condition \emph{(2)} by Theorem~\ref{thm:jets}.
\end{proof}

%
Each formulation suggests a fixed point formulation: \emph{(1)} and \emph{(2)} in the space of maps $\theta$, \emph{(3)} in the space of $g$-jets, defining a map $F(v)$ via
\begin{equation}\label{eq:map-F-frobenius} v \mapsto \theta := \vartheta + \int_{[p_0 \cdot ]} v \d g \mapsto F(v) := f_{\bar{\theta}}.\end{equation}
From such a point of view, we see that there are obstructions for $F(v)$ being a $g$-jet at least as regular as $v$, both of algebraic and analytical nature (see Proposition~\ref{prop:zust-1}), and we solve them by assuming suitable ``involutivity'' assumptions. In our first Frobenius-type result, we assume the validity of $\d g^i \wedge \d g^j = 0$ in the sense of~\eqref{eq:wedge-null}. The case of a scalar valued $g\colon I^m \to \R$, as in the Introduction, is covered also by the statement.

\begin{theorem}[case $\d g^i \wedge \d g^j = 0$]\label{thm:frob-1}
Let $\beta$, $\gamma \in (0,1]$ with $\beta(2+\gamma)>2$,  $g\in C^{\beta}(I^m; \R^k)$ be such that~\eqref{eq:wedge-null} holds for every $i \neq j$,  and $f\colon I^m \times \R^d \to \R^{d\times k}$ be $(g, x^d)$-differentiable, 
i.e.\ differentiable with respect to the map
\[
(x^m,x^d)\in I^m\times \R^d\mapsto (g(x^m), x^d)\in \R^{k+d},
\]
and
\[ \diff_{(g, x^d)} f =(\diff_{g} f, \diff_{x^d} f) \in 
{C^{\gamma}(I^m \times \R^{d}; \R^{(dk)\times(k+d)}}). \]
Then, for every $(p_0, \vartheta) \in I^m \times \R^d$, there exists a unique $\theta \in C^{\beta}(I^m; \R^d)$ such that ~\eqref{eq:pfaff-germ} holds and $\theta_{p_0} = \vartheta$.
\end{theorem}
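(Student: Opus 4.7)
The strategy is a fixed-point argument in the space of $g$-jets, carried out locally on a small sub-rectangle $J^m \subseteq I^m$ containing $p_0$ and then extended globally by continuation. By Lemma~\ref{lem:equivalence-pfaff}, finding $\theta$ with $\theta_{p_0}=\vartheta$ on $J^m$ amounts to finding a $v \in \jet^{\beta\gamma}_g(J^m; \R^{d\times k})$ with $v = f_{\bar\theta}$, where $\theta := \vartheta + \int_{[p_0\,\cdot]} v\,\d g$ is produced via the integration operator of Theorem~\ref{thm:jets}. This is precisely a fixed point of the map $F$ defined in~\eqref{eq:map-F-frobenius}.

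The crucial step is checking that $F$ actually takes values in $\jet^{\beta\gamma}_g$. Given such $v$, Theorem~\ref{thm:jets} yields $\theta \in C^\beta(J^m; \R^d)$ with $\diff_g\theta = v$. The anisotropic chain rule of Example~\ref{ex:anisotropic}, applied with the product map $h_{(x^m,x^d)} = (g(x^m), x^d)$ and our $(g, x^d)$-differentiable $f$, shows $f_{\bar\theta}$ is $g$-differentiable with
\begin{equation*}
\diff_g f_{\bar\theta} = (\diff_g f)_{\bar\theta} + (\diff_{x^d} f)_{\bar\theta}\,v.
\end{equation*}
Hölder composition with $\bar\theta \in C^\beta$ gives $(\diff_{(g,x^d)} f)_{\bar\theta} \in C^{\beta\gamma}$, and together with $v \in C^{\beta\gamma}$ this yields $\diff_g f_{\bar\theta} \in C^{\beta\gamma}$. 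Viewing each of the $d$ rows of $f_{\bar\theta}$ as a $g$-differentiable map into $\R^k$, I then invoke Proposition~\ref{prop:zust-1}: its algebraic hypothesis is supplied by the standing assumption $\int_{\partial Q} g^i\,\d g^j = 0$ for $i \neq j$, and its regularity requirement $\beta\gamma + 2\beta > 2$ is precisely $\beta(2+\gamma)>2$. Hence $F(v) \in \jet^{\beta\gamma}_g(J^m; \R^{d\times k})$.

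For the contraction, I work in a fixed closed ball of $\jet^{\beta\gamma}_g(J^m; \R^{d\times k})$ and compare $F(v_1)$ and $F(v_2)$. Theorem~\ref{thm:jets} (with the sharpened error~\eqref{eq:integration-g-jet-2}) gives $\|\theta_1-\theta_2\|_{C^\beta(J^m)} \lesssim [g]_\beta\,\|v_1-v_2\|_{\jet^{\beta\gamma}_g(J^m)}$, with the hidden constant improving as $\diam J^m$ shrinks. Feeding this into the chain-rule formula above for $\diff_g F(v_i)$ and exploiting $\diff_{(g,x^d)} f \in C^\gamma$, standard bilinear Hölder product estimates produce
\begin{equation*}
\|F(v_1)-F(v_2)\|_{\jet^{\beta\gamma}_g(J^m)} \le C\,(\diam J^m)^{\eta}\,\|v_1-v_2\|_{\jet^{\beta\gamma}_g(J^m)}
\end{equation*}
for some $\eta>0$ depending on $\beta,\gamma$ under $\beta(2+\gamma)>2$. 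Shrinking $\diam J^m$ turns $F$ into a contraction, so Banach's theorem gives a unique local $\theta$ on $J^m$; global existence on $I^m$ then follows by covering $I^m$ with finitely many such rectangles and patching via local uniqueness.

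The main obstacle I anticipate is the Hölder bookkeeping in the contraction step: $F$ simultaneously composes with $f$ (via $\diff_{(g,x^d)} f \in C^\gamma$), multiplies by the jet $v$ (in $C^{\beta\gamma}$), and integrates against $g \in C^\beta$, so each factor carries a different Hölder exponent and they must combine to yield a strictly positive $\eta$ exactly under $\beta(2+\gamma)>2$. A related subtlety is that $F(v)\in \jet^{\beta\gamma}_g$ is obtained non-constructively through Proposition~\ref{prop:zust-1}, yet the jet-norm of $F(v_1)-F(v_2)$ must nevertheless be controlled explicitly in terms of $v_1-v_2$, which requires re-invoking the continuity in Theorem~\ref{thm:jets} as a closing step.
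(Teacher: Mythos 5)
Your overall architecture coincides with the paper's: a fixed point for the map $F$ of \eqref{eq:map-F-frobenius} in the set of jets pinned at $p_0$, with the ``jets-into-jets'' step obtained exactly as in the paper from Theorem~\ref{thm:jets}, the anisotropic chain rule, and Proposition~\ref{prop:zust-1} under $\beta\gamma+2\beta>2$ together with the standing assumption \eqref{eq:wedge-null}; the local-to-global patching you add at the end is harmless, since the admissible size of $J^m$ depends only on $f$ and $g$ and not on the solution.

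The contraction step, however, has a genuine gap as you describe it. You propose to compare $F(v_1)$ and $F(v_2)$ by feeding the bound on $\theta_1-\theta_2$ into the chain-rule formula $\diff_g F(v_i)=(\diff_g f)_{\bar\theta_i}+(\diff_{x^d}f)_{\bar\theta_i}\,v_i$. But $\diff_g f$ and $\diff_{x^d} f$ are only $C^\gamma$, so differences such as $(\diff_g f)_{\bar\theta_1}-(\diff_g f)_{\bar\theta_2}$ are controlled only by $[\diff_g f]_\gamma\,\|\theta_1-\theta_2\|_0^{\gamma}\lesssim \|v_1-v_2\|^{\gamma}$: this route yields a bound that is $\gamma$-H\"older, not Lipschitz, in $\|v_1-v_2\|$, which gives neither a Banach contraction nor uniqueness of the fixed point (a map of the type $x\mapsto \eps\, x^{\gamma}$ with $\gamma<1$ has two fixed points). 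Note also that the jet norm is just the componentwise H\"older norm of $F(v_i)$ itself, so nothing forces you to pass through $\diff_g F(v_i)$. The paper's estimate instead exploits an exact cancellation that your argument misses: for each fixed $p$ the horizontal argument of $f$ is the same in $F(v_1)_p$ and $F(v_2)_p$, so the $\diff_g f$ contribution drops out and
\[
F(v_1)_p-F(v_2)_p=\Bigl(\int_0^1 (\diff_{x^d}f)_{\bigl(p,\,(1-t)\theta_1(p)+t\,\theta_2(p)\bigr)}\,\d t\Bigr)\bigl(\theta_1(p)-\theta_2(p)\bigr),
\]
which is \emph{linear} in $\theta_1-\theta_2$, hence in $v_1-v_2$ (using $(v_1-v_2)_{p_0}=0$); the $C^\gamma$ modulus of $\diff_{x^d}f$ then only enters through the H\"older seminorm of the bounded prefactor, and the small factors (of type $|J|^{\alpha+\beta}$ from $\|\theta_1-\theta_2\|_0$ and $|J|^{\beta(1-\gamma)}$ from lowering the exponent $\beta$ of $\delta(\theta_1-\theta_2)$ to $\beta\gamma$) produce the genuine Lipschitz contraction. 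With this replacement of the difference estimate, your proof becomes the paper's.
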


The proof follows a Banach fixed point argument in the subspace of jets $v \in \jet^{\beta\gamma}(I^m; \R^{d \times k})$ such that $v_{p_0} = f(p_0, \vartheta)$.  We define the map $F$ via~\eqref{eq:map-F-frobenius}, that is,
\[ F( v )_p := f \bra{p, \vartheta + \textstyle{ \int_{[p_0 p]} v \d g }  } \quad \text{for $p \in I^m$.}\]
Indeed, if $v \in \jet^{\beta\gamma}_g(I^m; \R^{d \times k})$ is a fixed point for $F$, then the series of identities
\begin{equation}\label{eq:fixed-point-series-identities} \theta_p := \vartheta+ \int_{[p_0 p]} v \d g = \vartheta + \int_{[p_0 p]} F(v) \d g = \vartheta + \int_{[p_0 p]} f(q, \theta_q) \d g_q\end{equation}
and Lemma~\ref{lem:equivalence-pfaff} give that $\theta$ defined by~\eqref{eq:fixed-point-series-identities} is a solution to~\eqref{eq:pfaff-germ} with $\theta_{p_0} = \vartheta$. 
The only non-trivial part is showing that $F$ maps jets into jets, which follows from Proposition~\ref{prop:zust-1}; details are provided in Appendix~\ref{appendix-frobenius}. 

In our second Frobenius-type result, we assume instead that $k=m$ and $g = (g^i)_{i=1}^m$ admits a ``diagonal'' structure, i.e., for every $i \in \cur{1, \ldots, k}$, the map $s \mapsto g^i_s$ is actually a function of the single coordinate $s^i$.  Notice that this assumption covers the classical case $g$ being the identity map.

\begin{theorem}[``diagonal'' case]\label{thm:frob-2}
Let $\beta, \gamma \in (0,1]$ with $\beta(2+\gamma) > 2$,  $g \in C^{\beta}(I^m; \R^m)$ be in the form
\[ g_s = (g^1_{s^1}, \ldots, g^m_{s^m})\quad \text{for $s = (s^1, s^2, \ldots, s^m) \in I^m$.}\]
and $f \colon I^m \times \R^d \to \R^{d \times m}$ be $(g, x^d)$-differentiable with

\[  \diff_{(g, x^d)} f  = (\diff_{g} f, \diff_{x^d} f) \in C^{\gamma}(I^m \times \R; \R^{(dm) \times (m+d)}),\]
and such that the following ``involutivity'' conditions hold in $I^m \times \R^d$
\begin{equation}\label{eq:involutivity} f^{\ell,i} \partial_{x^{\ell'}} f^{\ell, j} - f^{\ell, j} \partial_{x^{\ell'}}f^{\ell,i} = 0 \quad  \text{and} \quad  \partial_{g^i} f^{\ell,j}  - \partial_{g^j}f^{\ell, i} = 0, \quad \text{for $i$, $j \in \cur{1, \ldots, m}$,  $i \neq j$,} \end{equation}
and $\ell, \ell' \in \cur{1, \ldots, d}$.
Then, for every $(p_0, \vartheta) \in I^m \times \R^d$ there exists a unique $\theta \in C^{\beta}(I^m; \R^d)$ such that $\theta_{p_0} = \vartheta$ and~\eqref{eq:pfaff-germ} holds.
\end{theorem}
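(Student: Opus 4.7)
The plan is to adapt the Banach fixed point argument already sketched after Theorem~\ref{thm:frob-1} to the present ``diagonal'' setting. We iterate the map $F$ defined in~\eqref{eq:map-F-frobenius},
\[
F(v)_p := f\bigl(p,\, \vartheta + {\textstyle\int_{[p_0 p]} v\, \d g} \bigr), \qquad p \in I^m,
\]
inside the affine subspace $\mathcal{V}$ of jets $v \in \jet^{\beta\gamma}_g(Q; \R^{d \times m})$ with $v_{p_0} = f(p_0, \vartheta)$, for a suitably chosen rectangle $Q \ni p_0$. Any fixed point $v \in \mathcal{V}$ of $F$ yields, via the chain of identities \eqref{eq:fixed-point-series-identities} and Lemma~\ref{lem:equivalence-pfaff}, a local solution $\theta := \vartheta + \int_{[p_0\cdot]} v\, \d g$ of \eqref{eq:pfaff-germ} satisfying $\theta_{p_0} = \vartheta$.

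The main obstacle, and the place where the hypotheses \eqref{eq:involutivity} are essential, is showing that $F$ maps $\mathcal{V}$ into itself. In contrast with Theorem~\ref{thm:frob-1}, the wedge-null condition \eqref{eq:wedge-null} is unavailable: even for $g$ the identity one has $\d g^i \wedge \d g^j = \d s^i \wedge \d s^j \neq 0$ for $i \neq j$, so we are forced to invoke the curl-free alternative \eqref{eq_frob1cond1} of Proposition~\ref{prop:zust-1}. Setting $\bar\theta_p := (p, \theta_p)$ with $\theta := \vartheta + \int_{[p_0 \cdot]} v\, \d g$, Theorem~\ref{thm:jets} gives $\diff_g \theta = v$, and the chain rule on graphs from Example~\ref{ex:anisotropic} yields
\[
\partial_{g^i} F(v)^{\ell, j} = \partial_{g^i} f^{\ell, j}\big|_{\bar\theta} + \sum_{\ell'=1}^d \partial_{x^{\ell'}} f^{\ell, j}\big|_{\bar\theta}\, v^{\ell', i}.
\]
The first relation in \eqref{eq:involutivity} then cancels the $\partial_{g^\cdot} f$ contributions in the difference $\partial_{g^i} F(v)^{\ell, j} - \partial_{g^j} F(v)^{\ell, i}$. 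The remaining cross-terms are disposed of by the second relation in \eqref{eq:involutivity}, upon restricting the iteration to the $F$-invariant subset of jets of the form $v = f_{\bar\eta}$ (a class in which $F$ tautologically takes its values): on such $v$ the entries $v^{\ell', i} = f^{\ell', i}\big|_{\bar\eta}$ are themselves values of $f$ on a graph, so the pointwise algebraic identities of \eqref{eq:involutivity}, summed over $\ell'$, force the antisymmetry required for the curl-free condition.

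For the contraction step, the regularity window $\beta(2+\gamma) > 2$ is precisely what makes Proposition~\ref{prop:zust-1} applicable to $F(v)$ with the H\"older exponent $\beta\gamma$, and makes the improved Young estimate \eqref{eq:integration-g-jet-2} deliver
\[
\|F(v) - F(w)\|_{C^{\beta\gamma}(Q)} \le C\,\diam(Q)^\varepsilon\, \|v - w\|_{C^{\beta\gamma}(Q)}
\]
for some $\varepsilon > 0$ depending on $\beta$ and $\gamma$, using the $C^\gamma$-regularity of $\diff_{(g, x^d)} f$ together with standard H\"older-algebra estimates. Shrinking $Q$ turns $F$ into a strict contraction, and Banach's theorem produces a unique local fixed point, hence a unique local $\theta$. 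Global existence and uniqueness of $\theta$ on the whole of $I^m$ then follow by a standard continuation argument: one covers $I^m$ by a locally finite family of small rectangles, applies the local result with appropriately shifted base points, and glues the pieces using local uniqueness. The genuine novelty, as emphasized, sits in the jet-invariance step, where both the diagonal structure of $g$ and the algebraic conditions \eqref{eq:involutivity} are crucially used.
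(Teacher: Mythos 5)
The proposal breaks down at the step you yourself identify as the crux: jet invariance of the map $F$ from~\eqref{eq:map-F-frobenius}. Since the diagonal structure forces $\d g^i\wedge \d g^j\neq 0$ for $i\neq j$ (already for $g=\mathrm{id}$), the only route through Proposition~\ref{prop:zust-1} is the curl-free alternative~\eqref{eq_frob1cond1}, and your computation of $\partial_{g^i}F(v)^{\ell,j}-\partial_{g^j}F(v)^{\ell,i}$ shows that the second condition in~\eqref{eq:involutivity} kills the $\partial_{g}f$ part, but the cross terms are
\[
\sum_{\ell'}\bigl(\partial_{x^{\ell'}}f^{\ell,j}\bigr)_{\bar\theta}\,v^{\ell',i}-\sum_{\ell'}\bigl(\partial_{x^{\ell'}}f^{\ell,i}\bigr)_{\bar\theta}\,v^{\ell',j},
\qquad \theta=\vartheta+\textstyle\int_{[p_0\cdot]}v\,\d g .
\]
Your proposed fix — restricting to jets of the form $v=f_{\bar\eta}$ — does not make these vanish: the derivative factors are evaluated on the \emph{new} graph $\bar\theta$ while the entries $v^{\ell',i}=f^{\ell',i}_{\bar\eta}$ are evaluated on the \emph{old} graph $\bar\eta$, and the involutivity identities~\eqref{eq:involutivity} are pointwise identities at a single point of $I^m\times\R^d$. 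They apply only when $\eta=\theta$, i.e.\ exactly at a fixed point, which is what you are trying to construct. Concretely, starting from $v_0=f(\cdot,\vartheta)$ (which \emph{is} a jet, by the second condition in~\eqref{eq:involutivity}), the output $F(v_0)=f_{\bar\theta_1}$ is in general no longer curl-free, so $\int F(v_0)\,\d g$ is not path-independent and $F^2$ is not even defined: the iteration collapses at the second step. This is precisely the obstruction the paper records in the section ``Examples and open questions'' (``jet spaces are not stable with respect to the map $F$''), and the attempted repair by a corrector $\mathfrak{v}$ as in Proposition~\ref{prop:compensated1} is shown there to run into a loss of regularity; so the contraction estimate you assert afterwards is moot.

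The paper therefore takes a different route, which you would need to follow (or find a genuinely new idea): induction on the number of variables. Given a solution $\vartheta$ on $I^{\ell}\times\{0\}$, one extends it by solving, for each frozen $p\in I^{\ell}$, the Young differential equation $\theta_{(p,t)}=\theta_{(p,0)}+\int_{[(p,0)(p,t)]}f^{\ell+1}_{\bar\theta}\,\d g^{\ell+1}$; Theorem~\ref{thm:g-differentiable-yde} (which crucially exploits the diagonal structure of $g$) then gives $g$-differentiability of $\theta$ in the remaining directions together with an explicit equation for $\partial_{g^i}\theta$, and the involutivity conditions~\eqref{eq:involutivity} turn the difference $\partial_{g^i}\theta-f^i_{\bar\theta}$ into a solution of a linear Young equation with zero initial datum, which the Young--Gronwall Lemma~\ref{lem:gronwall} forces to vanish. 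That is where the hypotheses actually enter, rather than through jet invariance of $F$.
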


\begin{remark}[``block'' diagonal structure] We may extend the result above in the more general case  $k = \sum_{i=1}^m k_i$, $g \in C^{\beta}(I^m; \R^k)$ in the form
\begin{equation}\label{eq:blocks} g_s = (g^1_{s^1}, \ldots, g^{k_1}_{s^1}, g^{k_1+1}_{s^2}, \ldots, g^{k_1+k_2}_{s^2}, \ldots, g^{k}_{s^m})\quad \text{for $s = (s^1, s^2, \ldots, s^m) \in I^m$.}\end{equation}
In this case, $f : I^m \times \R^d \to \R^{d \times k}$ and the involutivity condition \eqref{eq:involutivity} reads
\[ f^{\ell,i} \partial_{x^{\ell'}} f^{\ell, j} - f^{\ell, j} \partial_{x^{\ell'}}f^{\ell,i} = 0 \quad  \text{and} \quad  \partial_{g^i} f^{\ell,j}  - \partial_{g^j}f^{\ell, i} = 0, \quad \text{for $i$, $j \in \cur{1, \ldots, k}$,} \]
and $\ell, \ell' \in \cur{1, \ldots, d}$ are such that the functions $g^i$, $g^j$ depend on different variables, i.e., belong to different blocks of \eqref{eq:blocks}. For simplicity we provide a proof only in the easier case of blocks of size $1$, as in the stated theorem.
\end{remark}

\begin{proof}[Idea of the proof of Theorem~\ref{thm:frob-2}:]
The proof of this result, the details of which are given in Appendix~\ref{appendix-frobenius}, is by induction over $\ell \in \cur{1, \ldots, m}$. For ease of notation, let us assume that $p_0 = 0$ and argue in the scalar-valued case, $d=1$. The case $\ell=1$ is then that of Young differential equations, for which the theory is well-established.  In the induction step from $\ell$ to $\ell+1$, we use the inductive assumption to define $\theta\colon I^{\ell+1} \to \R$ on the $\ell$-dimensional space $I^\ell \times \cur{0}$ such that the restriction of the system of equations \eqref{eq:pfaff-germ} holds, we extend it by solving the Young differential equation (with respect to variable $t \in I$)
\begin{equation}\label{eq:theta-equation} \theta_{(p, t)} = \theta_{(p,0)} + \int_{[(p,0)(p,t)]} f^{\ell+1}_{\bar{\theta}} \d g^{\ell+1}, \quad \text{for $t \in I$, $p \in I^\ell$}.\end{equation}
In view of the regularity assumptions on $f$, there exists a unique solution to 
\eqref{eq:theta-equation}, hence $\theta$ is well-defined and $t \mapsto \theta_{(p,t)}$ is H\"older (at every $p \in I^\ell$). The crucial point is to show that $\theta \in C^{\beta}(I^{\ell+1}; \R^d)$ and  solves~\eqref{eq:pfaff-germ}. To this aim, we use 
Theorem~\ref{thm:g-differentiable-yde},
that provides $g$-differentiability of $\theta$ with an explicit equation for $\partial_{g^i} \theta$, for $i \in \cur{1, \ldots, \ell}$, formally obtained by $g$-differentiation of~\eqref{eq:theta-equation}. Then, using the involutivity assumptions~\eqref{eq:involutivity}, we conclude that, for $i \in \cur{1, \ldots, \ell}$, the identity
\[ (\partial_{g^i} \theta)_{(p, t)} -f^i_{\bar{\theta}_{(p,t)}} = \int_{[(p,0)(p,t)]} (\diff_{x^d} f^{\ell+1})_{\bar{\theta}} \bra{ \partial_{g^i} \theta  - f^{i}_{\bar{\theta}} } \d g^{\ell+1} \quad \text{holds for $t \in I$, $p \in I^\ell$.}\]
Finally, a Gronwall-type argument (Lemma~\ref{lem:gronwall}) yields $\partial_{g^i}\theta = f^i_{\bar{\theta}}$, hence the thesis.
\end{proof}

\section{Examples and open questions}

We conjecture that both Theorem~\ref{thm:frob-1} and Theorem~\ref{thm:frob-2} are instances of a more general result, where one can drop any ``diagonal assumption'' on $g = (g^i)_{i=1}^k$ and simply assume that, given $f = (f^i)_{i=1}^k$, for any $i$, $j \in \cur{1, \ldots, k}$, at least one between $\d g^i \wedge \d g^j = 0$ in the form~\eqref{eq:wedge-null} or~\eqref{eq:involutivity} hold. Both strategies of proof, at present, do not allow us to conclude.  Indeed, in the proof of Theorem~\ref{thm:frob-1}, jet spaces are not stable with respect to the map $F$ ~\eqref{eq:map-F-frobenius}. In the following example, we show that one can try to introduce a new map $\mathfrak{F}$ sending jets into jets such that its fixed points are solutions to~\eqref{eq:pfaff}. However, this approach still faces difficulties due to loss of regularity. 

\begin{example}[triangular case]
In the setting of 
Proposition~\ref{prop:compensated1}
let 
\[ (v^1, v^2) \mapsto (f_{\bar{\theta}}^1 - \mathfrak{v},   f_{\bar{\theta}}^2 )\]
where $\theta_p := \vartheta + \int_{[p_0 p]} v \d g$ and 
$ \mathfrak{v}_{(s,t)} := \int_{[(s, 0) (s, t) ] }  \bra{ \partial _{g^1} f^2_{\bar{\theta}} -  \partial_{g^2} f^1_{\bar{\theta}}}  \d g^2$.
One has that any fixed point would provide a solution to~\eqref{eq:pfaff}. Indeed, (arguing formally) we have
\[ \begin{split} \mathfrak{v} & = \int_{[(s, 0) (s, t) ] } \bra{\partial _{g^1} v^1 f^2_{\bar{\theta}}  -  v^2 \partial_{g^2} f^1_{\bar{\theta}}} \d g^2 \quad \text{by the chain rule,}\\
 & = \int_{[(s, 0) (s, t) ] } \bra{  \partial _{g^1} (f^1_{\bar{\theta} } - \mathfrak{v})  f^2_{\bar{\theta}}  -  f^2 \partial_{g^2} f^1_{\bar{\theta}}} \d g^2 \quad \text{being $(v^1, v^2)$ a fixed point,}\\
 & =  - \int_{[(s, 0) (s, t) ] }  \mathfrak{v} \,  \partial_{g^1} f^2_{\bar{\theta}} \d g^2\quad \text{by involutivity~\eqref{eq:involutivity},} \\
 & = 0 \quad \text{by Lemma~\ref{lem:gronwall}.}
\end{split}\]
\end{example}

In the inductive proof of Theorem~\ref{thm:frob-2}, instead, it is not clear how to show that $\theta$, defined by solving $\d \theta = f_{\bar{\theta}} \d  g$  on a chosen family of paths, is $g$-differentiable (since Theorem~\ref{thm:g-differentiable-yde} strongly uses a diagonal assumption on $g$).

We end this section by showing how the theory developed above applies to three specific classes of H\"older  maps.

\begin{example}[Sobolev maps]\label{ex:sobolev}
The theory developed above applies to $g \in W^{\ell,p}(I^m; \R^k)$, provided that  $\ell p > m$, so that Sobolev embedding gives $g \in C^{\beta}(I^m; \R^k)$ for $\beta = \min\cur{1, \ell - m/p}$. Let us notice that (e.g.\ when $m=2$, otherwise one has to use precise representatives for Sobolev functions) one can also define $\d g^i \wedge \d g^j$ by means of the weak Jacobian~\cite{brezis_2011} $J(g^i, g^j) := \det( \nabla g^i, \nabla g^j)$, 
\begin{equation} \int_{\partial Q} g ^i \d g^j = \int_{Q}  J(g^i, g^j) \d \mathscr{L}^2 \quad \text{ for any rectangle $Q \subseteq I^2$.}\end{equation}
Hence, condition $\d g^i \wedge \d g^j = 0$ in the form~\eqref{eq:wedge-null} is  equivalent to $\det( \nabla g^i, \nabla g^j) = 0$  $\mathscr{L}^2$-a.e.\ in $I^2$.
\end{example}

\begin{example}[random fields]
Let $(\Omega, \mathcal{A}, \mathbb{P})$ be a probability space and $g\colon \Omega \to C^{\beta}(I^m; \R^k)$ be a random field, with $\beta$-H\"older realizations for $\mathbb{P}$-a.e.\ $\omega$. Then, our results provide a ``pathwise'' calculus (i.e., for fixed $\omega$), in particular to solve systems of differential equations of Frobenius type. As an application of Theorem~\ref{thm:frob-2} one could think e.g.\ in case $m=2$, writing $(x,t) \in I^2$ as the evolution through time (possibly with a noise in time) of an ``irregular'' curve $\theta_{(\cdot, t)}$ and the resulting surface. A possible example of application could be in mathematical finance, concerning the modelling of yield curves $B(t,T)$ ($t \le T$) in the theory of interest rates, see e.g.~\cite{filipovic_2001}, which can be seen as a stochastic surface, with  ``noise''  acting on the direction $t$ only (however, usually modelled with an It\^o stochastic differential equation, whose H\"older regularity $<1/2$ puts it outside the scope of our results). Toy applications of Theorem~\ref{thm:frob-1} include e.g.\ the case of a single (scalar) signal, e.g.\ $g$ being a fractional Brownian sheet or a fractional Levy Brownian motion with Hurst parameter $H > 2/3$. Addressing integration of noise with lower regularity (e.g.\ the standard Brownian sheet or Levy Brownian motion) seems to require suitable adaptations of techniques from Rough Paths theory to multi-dimensional stochastic calculus.
\end{example}

\begin{example}[lacunary series]
Examples of H\"older functions can be constructed (on $I^m = [0,1]^m$) via Fourier series $g_p := \sum_{k \in \mathbb{Z}^m} a_k e^{i  2 \pi k \cdot p }$, e.g.\ by choosing only one coefficient $a_k$ different from zero in each annulus $2^{n} < |k| \le 2^{n+1}$ (and denote it by $c_n$). As with the classical Weierstrass function, if $\limsup_{n \to +\infty} |c_n| 2^{-n \beta} < \infty$, one has that $g \in C^{\beta}(I^m)$, and if $\liminf_{n \to +\infty} |c_n| 2^{-n \beta} >0$, then $g \notin C^{\beta'}(I^m)$, for any $\beta' >\beta$. These maps may be useful to provide counterexamples, showing e.g.\ that the regularity assumptions made throughout are necessary (as it is shown in~\cite{zust_integration_2011} for the problem of integrating forms). For example, in Proposition~\ref{prop:zust-1}, the assumptions can be stated even if $\alpha+\beta>1$ and $2 \beta>1$. An analysis of the proof gives that a counterexample could be given (if $\alpha+2\beta\le 2$) if one could find $g \in C^{\beta}(I^2)$, $v \in \jet^{\alpha}_g(I^2)$ such that $v g \notin \jet^{\alpha}_g(I^2)$, or more explicitly (after integrating by parts)
\[ \int_{\partial Q}  v \d g = 0 \quad \text{for every  rectangle $Q \subseteq I^2$,}
\]
but
\[  \int_{\partial Q} v \d g^2 \neq 0 \quad \text{for some  rectangle $Q \subseteq I^2$,}\]
which could be (formally) read as  failure of the chain rule for the solution $g$ to the ``continuity equation''
\[ \operatorname{div} (b g) = 0 \quad \text{in $I^2$,}\]
where $b = (\nabla v)^{\perp}$ (see~\cite{alberti_2014, bianchini_2016} for some recent literature on the subject of two-dimensional continuity equation and its renormalization properties). More rigorously, this is equivalent to exhibit two sequences $(g^n)_{n \ge 1}$, $(v^n)_{n \ge 1}$ of smooth functions, bounded respectively in $C^{\beta}(I^2)$ and $C^{\alpha}(I^2)$ such that, as $n \to \infty$, in the sense of distributions in $I^2$, one has
\[ \operatorname{div} ( (\nabla v^n)^{\perp} g ) \to 0, \quad \text{but} \quad\operatorname{div}  (\nabla v^n)^{\perp} g^2 ) \not \to 0. \]
Let us notice that, if such an example exists, then $(v,g)$ cannot be Sobolev (Example~\ref{ex:sobolev}), for the first identity would read $J(v,g) = 0$, $\mathscr{L}^2$-a.e.\ and the chain rule would give $J(v, g^2) = 2 g J(v, g) = 0$. We notice that in the case $\alpha=\beta$, this turns out to be equivalent to the problem of non-trivial horizontal surfaces in the Heisenberg group (using e.g.\ the results in \cite{zust_2015}), which has been solved  in \cite{wenger2018constructing}. 
\end{example}

\appendix

\section{Notation and useful results}\label{appendix-notation}


We recall and slightly extend to the multi-dimensional case some notation from the theory of rough paths as in~\cite{gubinelli_controlling_2004, friz_course_2014}.



\subsection*{Discrete differential calculus.}

Given a metric space $(X, \dist)$ and functions $f\colon X \to \R^k$, $\omega\colon X^2 \to \R^k$, we write $f(x) := f_x$, $\omega(x,y) := \omega_{xy}$ for $x$, $y \in X$ and let
\[ \delta f \colon X^2 \to \R^k, \quad \delta f_{xy} := f_y - f_x \quad \text{for $x$, $y \in X$,}\]
\[ \delta \omega \colon X^3 \to \R^k, \quad  \delta  \omega_{xyz} := \omega_{yz} - \omega_{xz} + \omega_{xy} \quad \text{for $x$, $y$, $z \in X$.}\]
Notice that $\delta( \delta f ) = 0$ and that discrete Leibniz rules hold in the following form. For $f$, $g \colon X \to \R$, $\omega\colon X^2 \to \R$, let $(fg)_x := f_x g_x$ and $(f\omega)_{xy} := f_x \omega_{xy}$, then
\begin{eqnarray}\nonumber \delta (fg)_{xy} & =  & (\delta f_{xy}) g _y + f_x (\delta g_{xy})\\
 & = & f_x (\delta g_{xy}) +  (\delta f_{xy}) g_{x} + (\delta f_{xy}) (\delta g_{xy}) \quad \text{for $x$, $y \in X$,} \label{eq:leibniz} \\
 \delta (f \omega)_{xyz} & =  & f_x (\delta \omega_{xyz}) +  (\delta f_{xy}) \omega_{yz} \quad \text{for $x$, $y$, $z \in X$.}\end{eqnarray}
In particular, letting $\omega = \delta g$, we have the identity
\begin{equation}\label{eq:delta-young} \delta (f \delta g)_{xyz} = (\delta f)_{xy} (\delta g)_{yz}, \quad \text{for $x$, $y$, $z \in X$.}\end{equation}

With a slight abuse of notation, when $X \subseteq \R^k$ and $f$ is the identity map, we write $\delta_{xy} = y-x$, hence $\dist(x,y) = |y-x| = | \delta_{xy}|$.

\subsection*{H\"older functions.} For a metric space $(X, \dist)$ and $f\colon X\to \R^k$, we let $[f]_0 := \sup_{x \in X} |f_x|$ and, for $\omega\colon X^2 \to \R^k$, $\alpha \ge 0$, we let
\[  [\omega]_\alpha := \sup_{\substack{x, y \in X \\ x \neq y} } \frac{ \abs{ \omega_{xy}} }{\dist(x,y)^\alpha } \in [0, +\infty].\]
Notice that $[\omega + \omega']_\alpha \le [\omega]_\alpha + [\omega']_\alpha$, $[f \omega]_\alpha \le [f]_0 [\omega]_\alpha$ and $[\omega]_\alpha \le [\omega]_\beta \diam(X)^{\beta-\alpha}$ if $\alpha \le \beta$. Moreover, for any  fixed $x \in X$, one has
\begin{equation}\label{eq:equivalence-holder-norms}  [f - f_x]_0 \le [\delta f]_\alpha \diam(X)^{\alpha}. \end{equation}
Write $f\in C^{\alpha}(X; \R^k)$ if $\norm{f}_\alpha := [f]_0 + [\delta f]_\alpha < \infty$. Notice that when $\alpha = 1$ one obtains the space of bounded Lipschitz functions on $X$ with values in $\R^k$, and not the usual space of continuous differentiable functions. If $k=1$ we simply write $C^{\alpha}(X) = C^{\alpha}(X; \R)$.

\subsection*{Young integration.} If $I \subseteq \R$ is a bounded interval, for $f \in C^{\alpha}(I; \R^{m \times d} )$, $g \in C^{\beta}(I; \R^{d \times n})$, with $\alpha + \beta >1$, L.C.\ Young~\cite{young_inequality_1936} provided a robust notion (i.e., extending continuously the case of smooth functions) to the integral
\[ \int_{a}^b f_s \d g_s, \quad \text{for $[a,b] \subseteq I$.}\]
Using the terminology introduced above, one can actually prove (as an application of the Sewing Lemma~\cite{gubinelli_controlling_2004, feyel_curvilinear_2006, friz_course_2014}) that, for some constant $\c = \c(\alpha, \beta)$, one has
\begin{equation}\label{eq:young-germ} \abs{\int_a^b f_s \d g_s - f_a (\delta g)_{ab} } \le \c [\delta f]_\alpha [\delta g]_\beta |\delta_{ab}|^{\alpha+\beta},\quad \text{for $[a,b] \subseteq I$,}\end{equation}
which gives the  inequality, again with $\c = \c(\alpha, \beta)$,
\begin{equation}\label{eq:young} \abs{\int_a^b f_s \d g_s} \le \c  \norm{f}_\alpha [\delta g]_\beta (1 + |I|^\alpha) |\delta_{ab}|^{\beta},\quad \text{for $[a,b] \subseteq I$,}\end{equation}
showing that the Young integral function $t \mapsto \int_a^t f_s \d g_s$ is 
 $C^{\beta}(I; \R^{m \times n})$. Moreover, additivity holds, for $a \le b \le c$, with $[a,c] \subseteq I$, 
\begin{equation}\label{eq:young-2} \int_a^c f_s \d g_s =\int_a^b f_s \d g_s + \int_b^c f_s \d g_s,\end{equation}
as well as bilinearity of $(f,g) \mapsto \int_a^b f \d g$.

\subsection*{Additive functionals.}
It is not difficult to prove  that the validity of~\eqref{eq:young} and~\eqref{eq:young-2} actually characterize the Young integral $\int f \d g$.  In fact, this can be seen as a consequence of a general result for (dyadically) additive functionals. For our purpose, we give it in the case of segments and rectangles (a special case of a general result on rectangles of dimension $k$). We say that a functional $F$, defined on oriented segments all contained in $\Omega$ is dyadically additive if  for every $[pq] \subseteq \Omega$,
\begin{equation}\label{eq:dyadically-additive-k1} F([pq]) = F( [p r ] ) + F([rq]), \quad \text{with $r = \frac{p+q}{2}$.} \end{equation}
Similarly, we say that a functional $F$ defined on oriented rectangles all contained in $\Omega$ is dyadically additive if,  for every $Q=(p; v_1, v_2) \subseteq \Omega$, 
\begin{equation}\label{eq:dyadically-additive-k2}\begin{split}  F\bra{ p; v_1, v_2 } & =   F\bra{ p; \frac{ v_1}{2}, \frac {v_2}{2}} + F\bra{ p + \frac {v_1}{2}; \frac{ v_1}{2}, \frac {v_2}{2}} \\
&  \quad + F\bra{ p + \frac {v_2}{2}; \frac{ v_1}{2}, \frac {v_2}{2}}+ F\bra{ p + \frac{v_1+v_2}{2}; \frac{ v_1}{2}, \frac {v_2}{2}}.\end{split}\end{equation}
We have the following result.

\begin{lemma}\label{lem:dyadic}
Let $k \in \cur{1,2}$, $\Omega \subseteq \R^d$ and $Q \mapsto F(Q) \in \R$ be defined on oriented segments (if $k=1$) or oriented rectangles (if $k=2$) $Q\subseteq \Omega$ and  dyadically additive. If $F(Q) =o\bra{ \diam(Q)^k}$, i.e., $F(Q) = 0$ if $\diam(Q) = 0$ and
\[ \inf_{\eps \to 0} \sup_{\substack{ Q \subseteq \Omega \\ \diam(Q) \ge \eps } } \frac{ |F(Q)| }{ \diam(Q)^k } = 0,\]
then $F$ is identically null.
\end{lemma}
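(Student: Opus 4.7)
The plan is to iterate the dyadic additivity and exploit the uniform ``little-o'' bound on $F$. Fix an arbitrary $Q \subseteq \Omega$ and $\eta > 0$. The hypothesis $F(Q') = o(\diam(Q')^k)$ yields some $\eps > 0$ such that
\[ |F(Q')| \le \eta \, \diam(Q')^k \quad \text{for every $Q' \subseteq \Omega$ with $0 < \diam(Q') \le \eps$,} \]
while $F(Q') = 0$ whenever $\diam(Q')=0$. I then iterate \eqref{eq:dyadically-additive-k1} in case $k=1$, or \eqref{eq:dyadically-additive-k2} in case $k=2$, a total of $n$ times: each step replaces a single object by $2^k$ sub-objects each of exactly half the diameter, so after $n$ steps one obtains a decomposition
\[ F(Q) = \sum_{i=1}^{2^{nk}} F(Q_i^{(n)}), \qquad \diam(Q_i^{(n)}) = \diam(Q)/2^n, \]
with each $Q_i^{(n)}$ contained in $\Omega$. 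Choosing $n$ so large that $\diam(Q)/2^n \le \eps$ and applying the triangle inequality gives
\[ |F(Q)| \le \sum_{i=1}^{2^{nk}} |F(Q_i^{(n)})| \le 2^{nk} \cdot \eta \cdot \bra{\frac{\diam(Q)}{2^n}}^k = \eta \, \diam(Q)^k. \]
Since $\eta > 0$ is arbitrary, $F(Q) = 0$, and $Q \subseteq \Omega$ was arbitrary too, so $F \equiv 0$.

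The one step demanding a moment's attention is verifying that iterated dyadic subdivisions of an object contained in $\Omega$ produce sub-objects still contained in $\Omega$. For $k=1$ this is automatic since any sub-segment of $[pq]$ is again a segment contained in $\Omega$ whenever $[pq] \subseteq \Omega$. For $k=2$ it propagates by induction: at each dyadic step the vertices of each of the four sub-rectangles are either original vertices or midpoints of sides of the parent, hence lie in the convex envelope of the parent's four vertices, which by hypothesis is a subset of $\Omega$; in particular the convex envelope of the sub-rectangle is contained in that of the parent, preserving the containment property.

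Beyond this bookkeeping there is no real obstacle. The essence of the proof is the exact matching between the exponent $k$ in the ``little-o'' hypothesis and the scaling law $2^{nk}$ for the number of pieces produced by $n$ dyadic splittings of a $k$-dimensional object: this is precisely what makes the bound $|F(Q)| \le \eta\,\diam(Q)^k$ self-improving as $n \to \infty$, collapsing $F$ to zero.
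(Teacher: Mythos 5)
Your proof is correct and follows essentially the same route as the paper's: iterate the dyadic decomposition $n$ times, use additivity and the $o(\diam(Q)^k)$ bound on the $2^{nk}$ pieces of diameter $2^{-n}\diam(Q)$, and let $n\to\infty$. Your explicit $\eta$--$\eps$ bookkeeping and the check that sub-rectangles stay inside $\Omega$ are fine additions but do not change the argument.
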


\begin{proof}
Given $Q\subseteq \Omega$,  for any $n\ge 1$, we decompose it into $2^{kn}$ ``dyadic'' segments or rectangles $(Q_i)_{i=1}^{2^{kn}}$ (iterating the decompositions in ~\eqref{eq:dyadically-additive-k1} or~\eqref{eq:dyadically-additive-k2}) with $\diam(Q_i) =2^{-n} \diam (Q)$, for $i \in \cur{1, \ldots, 2^{kn}}$. By induction on the additivity assumption, we have
\[ |F(Q)| = \abs{\sum_{i=1}^{2^{kn}} F(Q_i)} \le  \sum_{i=1}^{2^{kn}} \abs{\diam(Q_i)^k} = 2^{kn} o\bra{ 2^{-kn}\diam(Q)^k } \to 0, \quad \text{as $n \to \infty$. \qedhere}\]
\end{proof}

To obtain from this result the claimed characterization of the Young integral, assume that $\int f \d g $ and $\int' f \d g$ both satisfy~\eqref{eq:young} and~\eqref{eq:young-2}. Then, letting $F := \int f \d g - \int' f \d g$,~\eqref{eq:young-2} yields that $F$ is dyadically additive, while adding and subtracting $f_a(\delta g)_{ab}$ in~\eqref{eq:young} gives 
\[ |F([a,b]) | \le \c [\delta f]_\alpha [\delta g]_\beta |\delta_{ab}|^{\alpha+\beta} = o( |\delta_{ab}|),\]
hence $F([a,b]) =0$ and the two integrals coincide.

\begin{remark}[from approximate to actual identities] \label{rem:from-approximate-to-identity} A slight extension of the same argument gives the following result. Let $f \in C(I)$, let $n \ge 1$, and for $i \in \cur{1, \ldots, n}$, let $\alpha_i$, $\beta_i  \in (0,1]$ with $\alpha_i+ \beta_i >1$, $f^i \in C^{\alpha_i}(I)$, $g ^i \in C^{\beta_i}(I)$ such that
\begin{equation}\label{eq:approximate-identity} \delta f_{ab} = \sum_{i=1}^n f_a^i \delta g^i_{ab} + o( |\delta_{ab}| )\quad \text{for $[a,b] \subseteq I$.}\end{equation}
Then, by considering $F :=  \delta f_{ab} - \sum_{i=1}^n  \int f^i \d g^i$, we obtain
\begin{equation}\label{eq:true-identity} \delta f_{ab} = \sum_{i=1}^n  \int_{a}^b f^i_s \d g^i_s \quad \text{for $[a,b] \subseteq I$.}\end{equation}
\end{remark}

\subsection*{Integration over curves.} Young integration allows one to extend the notion of integration of $1$-forms $\omega = f \d g $ along curves $\gamma\colon I \subseteq \R \to \Omega \subseteq \R^d$, provided that the regularity requirements are satisfied. For example, if $f \in C^{\alpha}(\Omega; \R)$, $g \in C^{\beta}(\Omega; \R)$, with $\alpha+\beta>1$ and $\gamma \in C^1( I; \Omega)$, then $f \circ \gamma \in C^{\alpha}(I; \R)$, $g \circ \gamma \in C^{\beta}(I; \R)$ and so we define
\[ \int_{\gamma } f \d g := \int_I f\circ \gamma \, \d g \circ \gamma. \]

We prove that, as in the smooth case, such integral does not depend on the parametrization, with the exception of the orientation. Precisely, if $\varphi \in C^1(J; \Omega)$ and there exists $\theta \in C^1( J; I)$ such that $\varphi = \gamma \circ \theta$, then
\[ \int_{\varphi } f \d g = \int_{\gamma } f \d g.\]
This can be seen at least in two ways: either using the fact that the same identity holds in the smooth case and approximating $f$ and $g$, or using the characterization of Young integral by~\eqref{eq:young} and~\eqref{eq:young-2}: indeed, the functional
\[ [a,b]\subseteq J \mapsto \int_{\gamma \res [\theta_a,\theta_b] } f \d g  = \int_{\theta_a}^{\theta_b} f\circ \gamma \, \d g \circ \gamma,\]
satisfies the additivity condition~\eqref{eq:young-2} and, by~\eqref{eq:young-germ}, the inequality, for $[a,b]\subseteq J$,
\[\begin{split} 
\abs{\int_{\gamma \res [\theta_a,\theta_b] } f \d g - (f \circ \varphi)_{a} \bra{ \delta (g \circ \varphi)}_{a b} } & =  \abs{ \int_{\theta_a}^{\theta_b} f\circ \gamma\, \d g \circ \gamma - (f \circ \gamma)_{\theta_a} \bra{ \delta (g \circ \gamma)}_{\theta_a \theta_b} }\\
&  \le \c | \delta \theta_{ab}|^{\alpha+ \beta} \le \c [ \delta \theta]_1^{\alpha+\beta} |\delta_{ab}|^{\alpha+ \beta},\end{split}\]
where $\c = \c(\alpha, \beta)[\delta (f\circ \gamma)]_{\alpha} [ \delta (g\circ \gamma)]_\beta$.

When $\gamma_t = (1-t) p + t q$ parametrizes the (oriented) segment $[pq]$, we write
\[ \int_{[pq]} f \d g := \int_{ \gamma } f \d g.\]
Notice that, given $p$, $q \in \Omega$ (with the segment $[pq]$ contained in $\Omega$), one has
\begin{equation}\label{eq:integral-change-orientation}
\int_{[pq]} f \d g = - \int_{[qp]} f \d g\end{equation}
and, whenever $p$, $q$, $r$ are collinear,
\begin{equation}\label{eq:split-integral-line}
\int_{[pq]} f \d g = \int_{[pr]} f \d g + \int_{[rq]} f \d g
\end{equation}
because of~\eqref{eq:young-2} and the fact that one can consider a common parametrization to compute the three integrals. Moreover,~\eqref{eq:young} gives, for $\c = \c(\alpha, \beta)$,
\begin{equation}\label{eq:young-curves-germs} \abs{\int_{[pq]} f \d g - f_p (\delta g)_{pq} } \le \c [\delta f]_\alpha [\delta g]_\beta |\delta_{pq}|^{\alpha+\beta},\quad \text{for $p$, $q$ with $[pq]\subseteq \Omega$.}\end{equation}
 Finally, Remark~\ref{rem:from-approximate-to-identity} extends to the case of segments: from an identity of the type~\eqref{eq:approximate-identity}, valid for every $a$, $b \in \Omega$ with $[ab] \subseteq \Omega$, we deduce that~\eqref{eq:true-identity} holds true. 

\subsection*{Integration over boundaries.}
Given an oriented rectangle  $Q=[p; v_1, v_2]$ contained in $\Omega$,  the integral of $f \d g$ on the boundary of $Q$ is defined as
\[ \int_{\partial Q} f \d g := \int_{[p (p+v_1)]} f \d g + \int_{[(p+v_1) (p+v_1+v_2)]} f \d g +\int_{[(p+v_1+v_2) (p+v_2)]} f \d g + \int_{[(p+v_2)p]} f \d g.\]
Because of~\eqref{eq:integral-change-orientation} and~\eqref{eq:split-integral-line}, the map  $Q \mapsto \int_{\partial Q} f \d g$ is dyadically additive. Other simple properties, such as bi-linearity of $(f,g) \mapsto \int_{\partial Q} f \d g$ follow from those of Young integrals. Moreover, we have the inequality
\begin{equation}\label{eq:young-boundary} \begin{split}
\abs{ \int_{\partial Q} f \d g } & = \abs{ \int_{\partial Q} (f-f_p) \d g + \int_{\partial Q} f_{p} \d g} \quad \quad \text{for any $p \in Q$,}\\
& = \abs{ \int_{\partial Q} (f-f_p) \d g} \quad \quad \text{since $\int_{\partial Q} f_p \d g = f_p \int_{\partial Q} 1 \d g = 0$,}\\
&  \le \c \bra{ [f-f_p]_0 [\delta g]_\beta \diam(Q)^\beta + [\delta (f-f_p)]_\alpha [\delta g]_\beta \diam(Q)^{\alpha+\beta} } \quad \text{by~\eqref{eq:young},}\\
&  \le 2 \c [\delta f ]_\alpha [\delta g]_\beta \diam(Q)^{\alpha +\beta}  \quad\quad \text{by~\eqref{eq:equivalence-holder-norms},}
\end{split}\end{equation}
where $\c = \c(\alpha, \beta)$.

%

\begin{remark}[Z\"ust integral]
By formally applying Stokes' theorem, one has
\[ \int_{\partial Q} f \d g = \int_Q 1 \d f \wedge \d g.\]
In fact, this can be made rigorous by extending Young integration to $k$-forms, as done by in  R.\ Z\"ust~\cite{zust_integration_2011}, providing a robust notion to the integral
\[ \int_Q f \d g^1 \wedge \ldots \wedge \d g^k \]
for rectangles $Q \subseteq \Omega$ and $f\in C^{\alpha}(\Omega)$, $g^i \in C^{\beta_i}(\Omega)$, with $\alpha+ \sum_{i=1}^k \beta_i > k$. However, for our purpose, we do not need to rely on this theory.
\end{remark}


%
%
%
%
%
%
%

\section{Jets and $g$-differentiable maps}\label{appendix-jets}

\begin{proof}[Proof of Theorem~\ref{thm:jets}]

{\em Step 1}. We first show that $v=\diff_g\theta$ for some $\theta\in C(I^m;\R^{d})$, if and only if~\eqref{eq:g-jet} holds. 
To this aim assume first that $\theta \in C(I^m, \R^d)$ is $g$-differentiable with $v = \diff_g \theta$. By applying Remark~\ref{rem:from-approximate-to-identity} to the ``approximate  identity''~\eqref{eq:g-differentiable}  we obtain that~\eqref{eq:integration-g-diff} holds. Then, summing over the four oriented edges of a rectangle $Q \subseteq I^m$, the left hand side terms cancel out, providing~\eqref{eq:g-jet}, i.e., $v \in \jet^{\alpha}_g(I^m; \R^{d \times k})$.

Conversely, assuming that $v \in \jet^{\alpha}_g(I^m; \R^{d \times k})$, we construct a $\theta\in C(I^m;\R^{d})$ by the formula
\[
\theta_p:=\theta_0+\sum_{i=1}^m \int_{[\bar p_{i-1}\bar p_i]} v \d g, 
\] 
where
$\bar p_i:=(p_1,\ldots, p_i, 0,\ldots, 0)$, $\bar p_0:=0$, or in other words,
\[
\theta_p=\theta_0+\int_{\gamma^p} v \d g, 
\]
the curve $\gamma^p$ standing for a parameterization
of the polygonal line $\bar p_0\bar p_1\ldots\bar p_m$. To show the 
continuity of $\theta$ together with~\eqref{eq:g-differentiable}, we write
\[
\theta_q=\theta_0+\int_{\gamma^q} v \d g.
\] 
Then setting $\gamma^{pq}:= p +\gamma^{q-p}$,
$\gamma:=\gamma^p\cdot \gamma^{pq}\cdot (-\gamma^q)$, $\cdot$ standing for the concatenation of curves, parameterized for convenience still over $[0,1]$,
$-\gamma_q$ standing for the curve with the same trace as $\gamma_q$ and opposite direction, we have that $\gamma$ is a closed curve and
\[
\delta\theta_{pq} = \int_\gamma v \d g + \int_{\gamma^{pq}} v \d g.
\]
Clearly, $\gamma$ as a $1$-chain can be viewed as a finite sum of the boundaries
of the $2$-chains associated to rectangles in $I^m$ with  sides parallel
to coordinate axes, and therefore, the first integral in the right-hand side
of the above equality vanishes, so that
\[
\delta\theta_{pq} =  \int_{\gamma^{pq}} v \d g =\int_0^{\ell(\gamma^{pq})} v\circ\gamma^{pq} \d g\circ\gamma^{pq},
\]
where in the last integral one takes the arclength parameterization of
 $\gamma^{pq}$ over $[0,\ell(\gamma^{pq})]$.
Then
\begin{align*}
|\delta\theta_{pq}| &=  (v\circ\gamma^{pq})_0 (\delta g\circ\gamma^{pq})_{0\ell(\gamma^{pq})} + O(\ell(\gamma^{pq})^{\alpha+\beta})\\
& = v_p \delta g_{pq} + O(|\delta_{pq}|)^{\alpha+\beta})\\
& = v_p \delta g_{pq} + o(\delta_{pq})
\end{align*}
proving the 
continuity of $\theta$ together with~\eqref{eq:g-differentiable}.

{\em Step 2}. We show now that~\eqref{eq:g-jet} 
is equivalent to~\eqref{eq:g-jet1gam}. In fact, as proven in Step~1,~\eqref{eq:g-jet} 
is equivalent to
the existence of a $\theta\in C(I^m;\R^{d})$ such that $v=\diff_g\theta$.
The claim follows then from Lemma~\ref{lm_int_gdiff1} below applied
(with $a:=0$, $b:=1$) to
closed Lipschitz curves $\gamma\colon [0,1] \to I^m$.
As a byproduct we have that
\[
\theta_q=\theta_0+\int_{\theta} v \d g
\]
for any Lipschitz $\theta\colon [0,1] \to I^m$ with $\theta(0)=p$, $\theta(1)=q$, in particular, for $\theta$ standing for a parametrization of the segment $[pq]$, proving~\eqref{eq:integration-g-diff}.

{\em Step 3}. Finally  \eqref{eq:integration-g-jet-2} implies the bound
\[
 [\delta \theta]_{\beta} \le \bra{ [\diff_g \theta ]_0 + \c [\delta (\nabla _g \theta)]_\alpha |I|^{\alpha} } [\delta g]_\beta.
\]
from which the last statement of the thesis of Theorem~\ref{thm:jets} 
follows immediately. \end{proof}

\begin{lemma}\label{lm_int_gdiff1}
Let $\alpha$, $\beta \in (0,1]$, with $\alpha+\beta>1$, 
$\theta \in C(I^m; \R^d)$ be $g$-differentiable with $g$-derivative
$\diff_g \theta\in C^{\alpha}(I^m; \R^{d\times k})$, and 
$g \in C^{\beta}(I^m; \R^k)$.
Then
for every $\gamma\colon [0,1] \to I^m$ Lipschitz
and for every $a,b\in [0,1]$, $a\leq b$, one has
\begin{equation}\label{eq:g-jet1gam2}
(\delta\theta\circ\gamma)_{ab}=
 \int_a^b  \diff_g \theta (\gamma(t)) \d g(\gamma(t)).  
\end{equation}
\end{lemma}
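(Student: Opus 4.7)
The plan is to reduce the identity to an instance of Remark~\ref{rem:from-approximate-to-identity}, after verifying that everything is well-posed and deriving the appropriate first-order expansion on the parameter interval. First I would check that each ingredient on the right-hand side of~\eqref{eq:g-jet1gam2} defines a legitimate Young integral: since $\gamma$ is Lipschitz and $\theta$ is continuous, $\theta \circ \gamma \in C([0,1];\R^d)$, $g\circ\gamma \in C^{\beta}([0,1];\R^k)$ and $(\diff_g \theta)\circ\gamma \in C^{\alpha}([0,1];\R^{d \times k})$, so that $\alpha+\beta>1$ places us precisely in the Young regime.

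Next I would apply the defining relation~\eqref{eq:g-differentiable} of $g$-differentiability at the points $p = \gamma(a)$, $q = \gamma(b)$, for $a$, $b \in [0,1]$:
\[
\delta(\theta\circ\gamma)_{ab} = \delta \theta_{\gamma(a)\gamma(b)} = (\diff_g \theta)(\gamma(a))\,\delta g_{\gamma(a)\gamma(b)} + o(|\gamma(b)-\gamma(a)|).
\]
Using that $\gamma$ is $L$-Lipschitz, $|\gamma(b)-\gamma(a)| \le L |b-a|$, the remainder becomes $o(|b-a|)$, and of course $\delta g_{\gamma(a)\gamma(b)} = \delta(g\circ\gamma)_{ab}$. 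Reading the matrix--vector product component by component, this is exactly the approximate identity~\eqref{eq:approximate-identity} applied to each scalar map $\theta^j \circ \gamma \in C([0,1])$, with $n=k$, $f^i := (\partial_{g^i}\theta^j)\circ\gamma \in C^{\alpha}([0,1])$ and $g^i$ there replaced by $g^i \circ \gamma \in C^{\beta}([0,1])$.

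Remark~\ref{rem:from-approximate-to-identity} then immediately upgrades this to the honest identity
\[
\delta(\theta^j\circ\gamma)_{ab} = \sum_{i=1}^k \int_a^b (\partial_{g^i}\theta^j)(\gamma(s))\, \d (g^i \circ \gamma)(s),
\]
for every $j \in \{1,\ldots,d\}$, and reassembling the components recovers precisely~\eqref{eq:g-jet1gam2}. Alternatively, one could argue directly via Lemma~\ref{lem:dyadic} in the case $k=1$ applied to the functional $F([a,b]) := \delta(\theta\circ\gamma)_{ab} - \int_a^b (\diff_g \theta)(\gamma(s))\,\d g(\gamma(s))$, which is dyadically additive by~\eqref{eq:young-2} and of size $o(|b-a|)$ by the above expansion combined with~\eqref{eq:young-germ}.

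The essentially unique technical point, and the only place where care is needed, is transferring the $o(|q-p|)$ error in~\eqref{eq:g-differentiable} into an $o(|b-a|)$ error on the parameter interval: this is where the Lipschitz regularity of $\gamma$ is used in a crucial way, for a mere H\"older parameterization would contract the effective exponent and potentially destroy the margin $\alpha+\beta-1>0$ that Young integration requires. Once this is observed, the rest is a routine matching of hypotheses to Remark~\ref{rem:from-approximate-to-identity}.
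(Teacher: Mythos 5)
Your proposal is correct and follows essentially the paper's own argument: the paper expands the Young integral via the germ estimate~\eqref{eq:young-germ}, uses $g$-differentiability (with the Lipschitz bound on $\gamma$ to convert the error into $o(\delta_{ab})$), and concludes by dyadic additivity and Lemma~\ref{lem:dyadic} — which is exactly the content of Remark~\ref{rem:from-approximate-to-identity} that you invoke, and also your stated alternative route. Your emphasis on the Lipschitz transfer of the $o(|\gamma(b)-\gamma(a)|)$ error is precisely the point the paper uses implicitly.
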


\begin{proof}
By the basic estimate on Young integral we have 
\begin{align*}
\int_a^b  \diff_g \theta (\gamma(t)) \d g(\gamma(t)) &=
(\diff_g \theta\circ\gamma)_a \delta (g\circ \gamma)_{ab} + O(|\delta_{ab}|^{\alpha+\beta}) \\
& = (\diff_g \theta\circ\gamma)_a \delta (g\circ \gamma)_{ab} + o(\delta_{ab})
\quad\text{because $\alpha+\beta>1$},
\end{align*}
hence from~\eqref{eq:g-differentiable} we get
\[
\int_a^b  \diff_g \theta (\gamma(t)) \d g(\gamma(t)) = (\delta\theta\circ\gamma)_{ab} + o(\delta_{ab}),
\]
and since germs on the right and left-hand sides of~\eqref{eq:g-jet1gam2} are dyadically additive on $[0,1]$, this gives~\eqref{eq:g-jet1gam2} by Lemma~\ref{lem:dyadic}.
\end{proof}

\begin{proposition}[anisotropic chain rule]\label{prop:chain-rule}
Let $\alpha \in (0,1]$, and, for $i \in \cur{1,2}$, let $\beta_i, \gamma_i \in (0,1]$, $g^i, \theta^i \in C^{\beta_i}(I^{m_i} ; J^{n_i})$, $h^i \in C^{\gamma_i}( J^{n_i} ; \R^{k_i})$. Let $m := m_1+m_2$,  $I^m = I^{m_1} \times I^{m_2}$, $p =(p^1, p^2) \in I^m$ and $g_p := (g^1_{p^1}, g^2_{p^2})$, and similarly $n :=n_1+n_2$,  $J^n = J^{n_1} \times J^{n_2}$, $x =(x^1, x^2) \in J^n$, $h_x := (h^1_{x^1}, h^2_{x^2})$. Assume that
\begin{equation}\label{eq:condition-chain-rule}  \min\cur{\beta_1\gamma_1, \beta_2 \gamma_2} + \alpha \min\cur{\gamma_1, \gamma_2} >1.\end{equation}
If $f:J^n \to \R^{d}$ is $h$-differentiable with $\diff_h f \in C^{\alpha}(J^n; \R^{d \times (k_1+k_2)})$ and $h\circ\theta\colon I^m \to J^n$ is $g$-differentiable, then $f \circ \theta$ is $g$-differentiable with
\[  \diff_g (f \circ \theta) = (\diff_h f)_{\theta} \diff_g (h\circ \theta).\]
\end{proposition}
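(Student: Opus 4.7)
The plan is to adapt the strategy used in the simpler Corollary to the anisotropic product setting, with more careful Young-type bookkeeping that exploits both the product structure of $h$ and that of $\theta$.

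First, I observe that~\eqref{eq:condition-chain-rule} implies in particular $\alpha + \min\{\gamma_1,\gamma_2\}>1$: using $\beta_i\leq 1$ we have $\min\{\beta_i\gamma_i\}\leq \min\{\gamma_i\}$, whence $\min\{\gamma_i\}(1+\alpha)\geq \min\{\beta_i\gamma_i\}+\alpha\min\{\gamma_i\}>1$, so $\alpha+\min\{\gamma_i\}>1$. Combined with $\diff_h f\in C^{\alpha}(J^n;\R^{d\times(k_1+k_2)})$ and $h\in C^{\min\gamma_i}(J^n;\R^{k_1+k_2})$, this opens the door to Young integration of $\diff_h f$ against $h$. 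Since $f$ is $h$-differentiable with $h$-derivative $\diff_h f$, the argument from Step~1 of the proof of Theorem~\ref{thm:jets} forces $\diff_h f$ to be an $h$-jet, and hence, by~\eqref{eq:integration-g-jet-2}, the sharpened expansion
\[\delta f_{xy} = (\diff_h f)_x\,\delta h_{xy} + R_{xy},\qquad x,y\in J^n,\]
holds with $|R_{xy}|$ controlled by a Young-type error.

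The core of the proof is the second step: evaluate the above at $x=\theta_p$, $y=\theta_q$ and bound $R_{\theta_p\theta_q}$ anisotropically in terms of $|\delta_{pq}|$. Rather than relying on the crude isotropic bound $|\theta_p-\theta_q|\leq C|\delta_{pq}|^{\min\beta_i}$, I would decompose the integral $\int_{[\theta_p\theta_q]}\diff_h f\,\d h$ along the piecewise-linear path passing through the intermediate point $(\theta^1_{q^1},\theta^2_{p^2})$. On each piece only one block of coordinates of $J^n$ varies, so Young's estimate may be applied using only the regularity $\gamma_i$ of the \emph{active} $h^i$ together with the $\beta_i$-H\"older behavior of $\theta^i$, yielding errors of order $|p^i-q^i|^{\beta_i(\alpha+\gamma_i)}$. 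The residual contribution arising from re-centering $\diff_h f$ between the two pieces is handled via its $\alpha$-H\"older regularity restricted to a slice, giving a cross error of order $|q^1-p^1|^{\alpha\beta_1}|q^2-p^2|^{\beta_2\gamma_2}$ (or the symmetric variant obtained from the alternative intermediate point $(\theta^1_{p^1},\theta^2_{q^2})$). The hypothesis~\eqref{eq:condition-chain-rule} is tailored so that all such exponents, viewed as powers of $|\delta_{pq}|$, exceed $1$, hence the total error is $o(|\delta_{pq}|)$, producing
\[\delta(f\circ\theta)_{pq}=(\diff_h f)_{\theta_p}\,\delta(h\circ\theta)_{pq}+o(|\delta_{pq}|).\]

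The third and final step is essentially algebraic: invoke the assumed $g$-differentiability of $h\circ\theta$, which gives $\delta(h\circ\theta)_{pq}=(\diff_g(h\circ\theta))_p\,\delta g_{pq}+o(|\delta_{pq}|)$, substitute into the previous display, and use the continuity (hence local boundedness) of $(\diff_h f)\circ\theta$ to absorb the contribution of the $o$-term. This yields
\[\delta(f\circ\theta)_{pq}=(\diff_h f)_{\theta_p}(\diff_g(h\circ\theta))_p\,\delta g_{pq}+o(|\delta_{pq}|),\]
identifying $(\diff_h f)_\theta\,\diff_g(h\circ\theta)$ as the required $g$-derivative of $f\circ\theta$. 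The main obstacle lies in Step~2: the anisotropic accounting of Young errors must be sharp enough to fit within the range prescribed by~\eqref{eq:condition-chain-rule}, rather than the stricter isotropic threshold $\min\beta_i(\alpha+\min\gamma_i)>1$ that a naive, unsplit, argument would produce.
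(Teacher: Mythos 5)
Your overall architecture coincides with the paper's: both derive, from Theorem~\ref{thm:jets} and the Young estimate~\eqref{eq:young-curves-germs}, a quantified expansion $\delta f_{xy}=(\diff_h f)_x\,\delta h_{xy}+R_{xy}$, evaluate it at $x=\theta_p$, $y=\theta_q$, and conclude via the assumed $g$-differentiability of $h\circ\theta$ exactly as in your third step. Where you genuinely diverge is the treatment of $R_{\theta_p\theta_q}$. The paper does \emph{not} use the crude isotropic bound you attribute to the ``unsplit'' argument: it estimates the two Young integrals along the single straight segment $[xy]$ and, since each $h^i$ depends only on the $i$-th block of coordinates even along that segment, obtains directly the anisotropic error of~\eqref{eq:expansion-chain-rule-1}, namely $O\bra{|\delta_{xy}|^{\alpha}(|\delta_{x^1y^1}|^{\gamma_1}+|\delta_{x^2y^2}|^{\gamma_2})}$, which after composition yields exponents $\alpha\min\cur{\beta_1,\beta_2}+\beta_i\gamma_i$. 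Your split through the corner $(\theta^1_{q^1},\theta^2_{p^2})$ (which does lie in $J^{n_1}\times J^{n_2}$ by the product structure), with the re-centering of $\diff_h f^2$ between $\theta_p$ and that corner, is correct and in fact sharper in its bookkeeping: it needs only $\beta_i(\alpha+\gamma_i)>1$ for $i\in\cur{1,2}$ together with one cross condition $\alpha\beta_1+\beta_2\gamma_2>1$ (or its symmetric variant), all of which are implied by what the paper's straight-segment bound requires, namely $\alpha\min\cur{\beta_1,\beta_2}+\min\cur{\beta_1\gamma_1,\beta_2\gamma_2}>1$.

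One caveat, which however is shared with the paper's own proof: neither your exponents nor the paper's are literally guaranteed by~\eqref{eq:condition-chain-rule} as printed. Take $\gamma_1=\gamma_2=1$, $\beta_2=1$, $\beta_1$ small and $\alpha$ close to $1$: then~\eqref{eq:condition-chain-rule} reads $\beta_1+\alpha>1$ and can hold while $\beta_1(\alpha+\gamma_1)=\beta_1(1+\alpha)<1$, so the block-$1$ error is not $o(\delta_{pq})$ in either argument. What both proofs actually need is $\min\cur{\beta_1\gamma_1,\beta_2\gamma_2}+\alpha\min\cur{\beta_1,\beta_2}>1$, i.e.\ $\alpha$ multiplying $\min\cur{\beta_1,\beta_2}$ rather than $\min\cur{\gamma_1,\gamma_2}$; this is consistent with the isotropic chain-rule Corollary, whose hypothesis $\sigma(\alpha+\beta)>1$ is exactly this condition with $\beta_i=\sigma$, $\gamma_i=\beta$. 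So your blanket assertion that~\eqref{eq:condition-chain-rule} ``is tailored'' to make all exponents exceed $1$ should be replaced by the explicit exponent check under this corrected reading of the hypothesis; with that reading, your path-splitting argument is complete and, term by term, slightly less demanding than the paper's straight-segment estimate.
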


\begin{proof}
First, notice that~\eqref{eq:condition-chain-rule} implies $\alpha+ \gamma_i>1$ for $i \in \cur{1,2}$, hence Theorem~\ref{thm:jets} applies with $f$ in place of $\theta$ and $h$ in place of $g$ and $\gamma$ instead of $\beta$, yielding the identities
\[  \delta f_{xy} = \int_{[xy]}  \diff_h f \d h = \int_{[xy]}  \diff_h f^1 \d h^1 + \int_{[xy]}  \diff_h f^2 \d h^2.\]
Estimating separately the two integrals above by means of~\eqref{eq:young-curves-germs}, we deduce that the expansion
\begin{equation}\label{eq:expansion-chain-rule-1} \delta f_{xy}  = (\diff_h f)_x \cdot \delta h_{xy} + O\bra{ |\delta_{xy}|^{\alpha} (|\delta_{x^1 y^1}|^{\gamma_1} + |\delta_{x^2 y^2}|^{\gamma_2})  }  \end{equation}
holds true. Choosing $x = \theta_p$, $y = \theta_q$, one has, for $i \in \cur{1,2}$,
\[ | \delta_{x^i y^i} | = | \delta \theta^i_{pq} | \le [\delta \theta^i]_{\beta_i} | \delta_{pq}|^{\beta_i},\]
hence the second term in the  right hand side in~\eqref{eq:expansion-chain-rule-1} is $o(\delta_{pq})$, because of~\eqref{eq:condition-chain-rule}. For the first term, since $h \circ \theta$ is $g$-differentiable, we have
\[ (\diff_h f)_{\theta_p} \cdot \delta h_{\theta_p \theta_q} = (\diff_h f)_{\theta_p} (\diff_g h\circ \theta)_p + o(\delta_{pq})\]
and the thesis follows.
\end{proof}

\begin{proof}[Proof of Proposition~\ref{prop:zust-1}]
We introduce the function $F$, defined on rectangles $Q \subseteq I^m$,
\[  F(Q) := \int_{\partial Q} v \d g.\]
As seen in Appendix~\ref{appendix-notation}, $F$ is dyadically additive, hence to show  that $F$ is null it is sufficient by  Lemma~\ref{lem:dyadic} to prove that that $F(Q) = o( \diam(Q)^2)$, as $\diam(Q) \to 0$. 

Fix $Q \subseteq I^m$ and $\bar{p} \in Q$. Notice that we can always assume that $g_{\bar{p}} = 0$, since replacing $g$ with $g-g_{\bar{p}}$ leaves $\delta g$ unchanged, hence the integral defining $F(Q)$, as well as $\diff_g v$ ($g$-differentiability depends on $g$ only through its increments). Moreover, we may restrict our analysis from $I^m$ to $Q$, so that the inequality $[g]_0 \le [\delta g]_\beta \diam(Q)^\beta$  holds. Integrating by parts, we have
\begin{equation} F(Q) =  \int_{\partial Q}  v \d g = -\int_{\partial Q}  g \d v. \end{equation}
As a consequence of the $g$-differentiability assumption, we  claim that the following identity holds:
\begin{equation}\label{eq:stokes-without-zust}
\int_{\partial Q}  g \d v =  \int_{\partial Q}  g (\diff_g v) \d g = \sum_{i,j=1}^k \int_{\partial Q}  g^i (\partial_{g^j} v^i) \d g^j.
\end{equation}
Once this identity is established, the thesis follows by proving that, for any $i, j \in \cur{1, \ldots, k}$
\begin{equation}\label{eq:thesis-germs} \int_{\partial Q}  g^i (\partial_{g^j} v^i) \d g^j + \int_{\partial Q}  g^j (\partial_{g^i} v^j) \d g^i =  o( \diam(Q)^2 ).\end{equation}
The key observation to prove~\eqref{eq:thesis-germs} is that
\begin{equation}\label{eq:stokes-zero} (\partial_{g^j} v^i)_{\bar{p} } \int_{\partial Q}  g^i  \d g^j + (\partial_{g^i} v^j)_{\bar{p} } \int_{\partial Q}  g^j  \d g^i = 0,\end{equation}
as a consequence of the hypothesis. Indeed, if $\d g^i \wedge \d g^j = 0$ holds in the form~\eqref{eq:wedge-null}, then both integrals in~\eqref{eq:stokes-zero} are zero, otherwise we have $(\partial_{g^j} v^i)_{\bar{p}}  =(\partial_{g^i} v^j)_{\bar{p}}$, and integration by parts gives
\[ \int_{\partial Q}  g^j  \d g^i =  \int_{\partial Q}  \d (g^j   g^i) - \int_{\partial Q}  g^i \d g^j =- \int_{\partial Q}  g^i \d g^j.\]
Therefore, subtracting~\eqref{eq:stokes-zero} from~\eqref{eq:thesis-germs}, by linearity of Young integral, 
 to prove~\eqref{eq:thesis-germs}
it is suffices to show 
\[  \int_{\partial Q}  g^i \bra{ (\partial_{g^j} v^i) -  (\partial_{g^j} v^i)_{\bar{p} } } \d g^j + \int_{\partial Q}  g^j \bra{ (\partial_{g^i} v^j) -   (\partial_{g^j} v^i)_{\bar{p} } } \d g^i   = o( \diam(Q)^2 ).\]
In fact, we estimate the two integrals above separately (and we argue only with the first one, the second being similar). Using~\eqref{eq:young-boundary} with $\gamma := \min\cur{\alpha, \beta}$ instead of $\alpha$, we have, with $\c = \c(\gamma, \beta)$,
\[ \begin{split} &  \abs{ \int_{\partial Q}  g^i \bra{ (\partial_{g^j} v^i) -  (\partial_{g^j} v^i)_{\bar{p} } } \d g^j  }  \c \le [ \delta \bra{  g^i \bra{ (\partial_{g^j} v^i) -  (\partial_{g^j} v^i)_{\bar{p} } }} ]_{\gamma} [\delta g^j]_{ \beta} \diam(Q)^{\gamma+\beta} \\
& \quad \le \c \bra{ [ \delta  g^i]_\gamma [\partial_{g^j} v^i -  (\partial_{g^j} v^i)_{\bar{p} }  ]_{0} + [ g^i]_0 [ \delta (\partial_{g^j} v^i)  ]_{\gamma} }[\delta g^j]_{ \beta}\diam(Q)^{\gamma+ \beta}\\
& \quad \le 2 \c [ \delta  g^i]_\beta [\delta g^j]_{ \beta} [\delta (\partial_{g^j} v^i) ]_{\alpha} \diam(Q)^{\alpha+2\beta}, \ 
\end{split}\]
using also the inequality $[g^i]_{0} \le [\delta g^i]_{\beta}\diam(Q)^{\beta}$,  to hence~\eqref{eq:thesis-germs}.

Finally, to prove~\eqref{eq:stokes-without-zust}, it is sufficient to show that, for $p$, $q \in Q$,
\[ \int_{[pq]} g \d v =  \int_{[pq]} g(\diff_g v) \d g.\]
In turn, the identity between the two integrals follows, by Remark \ref{rem:from-approximate-to-identity}, from the validity of the ``approximate  identity''
\[ g_p \delta v_{pq} = g_p (\diff_g v)_p \delta g_{pq} + o(\delta_{pq}),\]
which can be obtained multiplying the expansion in the definition of $g$-differentiability~\eqref{eq:g-differentiable} for $v$ by the uniformly bounded function $g_p$. 
\end{proof} 

\begin{proof}[Proof of Lemma~\ref{lem:examples-dg-dg-null}]
First, we prove that, for any $i$, $j \in \cur{1, \ldots, n}$,~\eqref{eq:wedge-null} holds with $h$ instead of $g$. We introduce the dyadically additive function 
\[ F(Q) := \int_{\partial Q} h^i \d h^j, \quad \text{for rectangles $Q \subseteq I^m$.}\]
and prove that $F(Q) = o(\diam(Q)^2)$. For a fixed rectangle $Q \subseteq I^m$, choosing any $\bar{p} \in Q$, by subtracting $\int_{\partial Q} h^i_{\bar{p}} \d h^j = 0$, we can assume that $h^i_{\bar{p}} = 0$ and, restricting the argument to $Q$ instead of $I^m$, the bound $[ h^i]_0 \le [\delta h^i]_{\beta} \diam(Q)^{\beta}$ holds. Moreover, we can also replace $h^j$ with $h^j-h^j_{\bar{p}}$ and similarly $w$ with $w- w_{\bar{p}}$ so that $[ h^j]_0 \le [\delta h^j]_{\beta} \diam(Q)^{\beta}$ and $[w]_0 \le [\delta w]_{\beta} \diam(Q)^{\beta}$. Arguing as in the proof of~\eqref{eq:stokes-without-zust}, we have the identity
\[ \int_{\partial Q} h^i \d h^j = \int_{\partial Q} h^i (\partial_w h^j) \d w \]
By~\eqref{eq:young-boundary}, with $\varepsilon= \min\cur{\alpha, \beta}$ in place of $\alpha$, we have
\[ \begin{split} \abs{ \int_{\partial Q} h^i \bra{ (\partial_w h^j)- (\partial_w h^j)_{\bar{p}} } \d w }  & \le \c \sqa{\delta \bra{ h^i \bra{ (\partial_w h^j)- (\partial_w h^j)_{\bar{p}} }} }_{\varepsilon} [ \delta w]_{\beta} \diam(Q)^{\varepsilon+\beta} \\
& \le \c [\delta h^i]_{\beta}[\delta \partial_w h^j]_{\alpha} [\delta w]_{\beta} \diam(Q)^{\alpha+ 2 \beta} = o(\diam(Q)^2), \end{split}\]
hence it is sufficient to prove that
\[  \int_{\partial Q} h^i (\partial_w h^j)_{\bar{p}} \d w = (\partial_w h^j)_{\bar{p}} \int_{\partial Q} h^i \d w =  o(\diam(Q)^2).\]
Actually, Proposition~\ref{prop:zust-1} gives that $h^i \in \jet^{\alpha}_w(I^m; \R)$ since~\eqref{eq:wedge-null} holds for $g^i = g^j =w$, hence we already have that $\int_{\partial Q} h^i \d w  =0$.

Let then $f \in C^{\gamma}(\R^n; \R^k)$ be in the assumptions. To show that $g:= f \circ h$ satisfy~\eqref{eq:wedge-null}, we use stability with respect to approximations of $f$. Precisely, let $(f^i)_{i \ge 1}$ be a sequence of smooth maps converging to $f$ (locally) in $C^{\gamma}(\R^n; \R^k)$, obtained e.g.\ by convolution. The chain rule (Proposition~\ref{prop:chain-rule}) gives that $f^i \circ h$ are all $w$-differentiable, hence the thesis holds by the previous discussion. On the other side, $f^i \circ h$ converge to $f \circ h$ in $C^{\beta \gamma}(I^m; \R^k)$, and continuity of Young integral in this topology (due to the assumption $2 \beta\gamma >1$ ensure that the thesis holds in the limit as $i \to +\infty$ as well.
\end{proof}

\section{Proof of the implicit function theorem}\label{appendix-dini}

\begin{proof}[Proof of Theorem~\ref{thm:dini}]
As in the classical implicit function theorem, existence of $\theta$ is established by a fixed point argument.  To simplify notation, let us assume that $x_0 = 0 \in \R^{m+n}$. For $x$, $y \in I^{m+n}$, we let 
\begin{equation} \label{eq:taylor-dini} \varrho_{xy} := \delta f_{xy} - (\diff_{h} f)_x \delta h_{xy},\end{equation}
be the remainder of the Taylor expansion of $f$ at $x$ in terms of $h$, so that Proposition~\ref{prop:chain-rule}, with $\beta_1 = \beta$ and $\beta_2 = 1$, gives
\begin{equation}\label{eq:estimate-remainder} |\varrho_{xy} | \le \c [\delta (\diff_h f) ]_{\alpha} \bra{ [\delta g]_{\beta} |\delta_{x^m y^m}|^{\beta} + |\delta_{x^n y^n}| }  |\delta_{xy}|^{\alpha},
\end{equation}
with $\c = \c(\alpha, \beta)$. Subtracting~\eqref{eq:taylor-dini} with $(0, x)$ and $(0, y)$ instead of $(x,y)$ one  obtains the identity
\[ \delta f_{xy} =  (\diff_{h} f)_{0} \delta h_{xy} + \varrho_{0 y}- \varrho_{0 x}, \]
From which we  deduce that $\delta f_{xy} = 0$ if and only if
\[ 0 = (\diff_{x^n} f)_{0} \delta_{x^n y^v} + (\diff_{g} f)_{0} \delta g_{x^m y^m} + \bra{ \varrho_{0 y}- \varrho_{0 x} }, \]
i.e., writing $r_{x} := (\diff_{x^n} f)_{0}^{-1}\varrho_{0 x}$, the following equation holds:
\begin{equation}\label{eq:lsde} \delta_{x^n y^n} =  -(\diff_{x^n} f)_{0}^{-1} \delta g_{x^m y^m}  - \delta r_{xy}.\end{equation}
We also notice that, starting from the identity
\[  \begin{split} \varrho_{0 y} - \varrho_{0 x} & = \varrho_{xy} - \delta \varrho_{0 x y} \\
& = \varrho_{xy} - \delta (\diff_{h} f)_{0 x} \delta h_{xy}
\end{split}\]
we obtain, after multiplication with $(\diff_{x^n} f)^{-1}_{0}$,
\begin{equation}\label{eq:estimate-r}
\begin{split} | \delta r_{xy} |  &  \le | (\diff_{x^n} f)^{-1}_{0} |\bra{  | \varrho_{xy} | +  |\delta (\diff_{h} f)_{0 x}| | \delta h_{xy} | }\\
& \le  \c  \bra{ |\delta_{xy}|^{\alpha} + |\delta_{0 x}|^{\alpha} }    \bra{ |\delta_{x^m y^m}|^{\beta} + |\delta_{x^n y^n}|  }  \quad \text{using~\eqref{eq:estimate-remainder},}\end{split}
\end{equation}
where $\c =  \c ( \alpha, \beta, f, g)$.

 We introduce a interval $J$ with $\bar{J} \subseteq I$ with length $|J|$ to be specified later  and such that $0 \in J^m$. On the set
\[ V := \cur{ \theta \in C^{\beta}(\bar{J}^m; I^n) \colon \theta_{0} = 0, \,  [\delta \theta]_\beta \le \eps },\]
we introduce the map
\[ F( \theta) :=   -(\diff_{x^n} f)_{0}^{-1} g - r_{\bar{\theta}},\]
so that any fixed point $\theta \in V$ for $F$ yields $\theta$ such that $f_{\bar{\theta}} = f_{0}$, where $\bar{\theta}_p :=(p, \theta_p)$, $p \in J^m$. To show existence of fixed points, notice first that
\begin{equation}\label{eq:delta-F-dini} \delta F( \theta)_{pq} = -(\diff_{x^n} f)_{0}^{-1}\delta g_{pq} - \delta r_{\bar{\theta}_p \bar{\theta}_q},\end{equation}
so that by~\eqref{eq:estimate-r},
\[ \begin{split} | \delta F(\theta)_{pq} | & \le  | (\diff_{x^n} f)_{0}^{-1} | |\delta g_{pq}| + |\delta r_{\bar{\theta}_p \bar{\theta}_q}| \\
& \le  \c \bra{ | \delta _{pq} |^\beta  + \diam(J^m)^{\alpha} |\delta_{pq}|^{\beta} + \diam(J^m)^{\alpha} |\delta_{\theta_p \theta_q}|} \quad \text{by~\eqref{eq:estimate-r},}\\
& \le \c \bra{ 1+  \diam(J^m)^{\alpha} [\delta \theta]_{\beta}}  | \delta _{pq} |^\beta  \le \eps  | \delta _{pq} |^\beta,
\end{split}\]
provided that $J$ and $\eps$ are chosen such that
\begin{equation}\label{eq:condition-fixed-point-dini-1} \c \bra{ 1+  \diam(J^m)^{\alpha} \eps } \le \eps.\end{equation}
To show that $F:V \to V$ is a contraction ($V$ being endowed with the uniform norm) given $\theta$, $\varphi \in V$, one has
\[ \begin{split}  | F(\varphi)_p - F(\theta)_p |  &  =  | r_{\bar{\theta}_p \bar{\varphi}_p} |\\
& \le \c \bra{ |\delta_{\bar \theta_p \bar \varphi_p }|^{\alpha} + |\delta_{0 \bar{\theta}_p}|^{\alpha} }   |\delta_{\theta_p \varphi_p}| \quad \text{by~\eqref{eq:estimate-r},} \\
& \le \c  \eps^\alpha \diam(J^m)^{\beta\alpha} [\theta - \varphi]_0 \le \frac 1 2  [\theta - \varphi]_0, \end{split} \]
provided that $J$ and $\eps$ are chosen such that
\begin{equation}\label{eq:condition-fixed-point-dini-2}  \c  \eps^\alpha \diam(J^m)^{\beta\alpha} \le \frac 1 2. \end{equation}
Therefore, if~\eqref{eq:condition-fixed-point-dini-1} and~\eqref{eq:condition-fixed-point-dini-2} are satisfied, there exists a unique $\theta \in V$ such that $F(\theta) = \theta$, and in particular $f_{\bar{\theta}} = f_{0}$.

To show that $\bar{\theta}$ is surjective on the level set $f^{-1}(f_{0})$ (possibly up to choosing a smaller $J$), let $x =(x^m, x^n)\in J^{m} \times J^{n}$ with $f_x = f_{0}$ and choose $p = x^m$, so that~\eqref{eq:lsde} with $y = \bar{\theta}_p$ gives
\[ \begin{split} |\delta_{x^n \theta_p}| & =   |\delta r_{x \bar{\theta}_p} |  \le \c \diam(J^m)^{\alpha} | \delta_{x^n \theta_p}|  \quad \text{by ~\eqref{eq:estimate-r},} \\
& < | \delta_{x^n \theta_p}|\end{split}\]
provided that $J$ is such that $\c | J|^{\alpha} < 1$. This yields a contradiction, unless $x^n = \theta_p$.

Finally, to show that $\theta$ is $g$-differentiable, we have, from~\eqref{eq:taylor-dini} with $x = \bar \theta_p$, $y =\bar  \theta_q$, 
 \[ \varrho_{\bar \theta _p \bar \theta _q} = - (\partial_{g} f)_{\bar{\theta}_p } \delta g_{pq} - (\diff_{x^n} f)_{\bar{\theta}_p } \delta \theta_{pq},\]
hence~\eqref{eq:g-dini} follows, since~\eqref{eq:estimate-remainder} gives
\[  \varrho_{\bar \theta _p \bar \theta _q}  =  O(| \delta_{pq}|^{\beta(1+\alpha)} ),\]
and we can multiply both sides with $(\diff_{x^n} f)_{\bar{\theta}_p }^{-1}$, which is everywhere invertible on $J$ (possibly choosing a smaller $J$) by continuity of  $\diff_{x^n} f$ and $\bar{\theta}$. \end{proof}

\begin{proof}[Proof of Proposition~\ref{prop:g-induced-composition}]
First, write
\[ \delta \varphi_{xy} = (\diff_f \varphi)_x  \delta f_{xy} + O( |\delta_{xy} |^{(\gamma+ \beta)} )\]
and then let $x = \bar{\theta}_p$, $y = \bar{\theta}_q$, with $\theta$ as in Theorem~\ref{thm:dini}, so that
\[  \delta \varphi_{\bar{\theta}_p \bar{\theta}_q} = O( |\delta_{\bar{\theta}_p \bar{\theta}_q} |^{(\gamma+ \beta)} ) = O( |\delta_{pq}|^{(\gamma+ \beta)\beta}) = o(\delta_{pq}),\]
for $p$, $q \in J^m$. We deduce that $\varphi_{\bar{\theta}_p}$ is constant, i.e., $z$ is (locally) constant on the level sets of $f$, hence we may represent $\varphi = \Phi \circ f$.  
\end{proof}

\section{Proof of Frobenius theorems}\label{appendix-frobenius}

\begin{proof}[Proof of Theorem~\ref{thm:frob-1}]
Without loss of generality, we argue in the case $p_0 = 0$. We also write $\norm{f} := [f]_0 + [\delta (\diff_{(g, x^d)} f)]_{\gamma}$ and $\norm{g} := [\delta g]_\beta$. With a slight abuse of notation we  write $g$ also to denote the function on $I^{m} \times \R^d$ given by $g(x^m, x^d) = g(x^m)$.  Let $J\subseteq I$ be an open interval with $0 \in J$, with length $|J| \le 1$ to be specified below, let $0<r\le 1$ also to be specified below and define
\[ V := \cur{ v \in \jet^{\alpha}_g(\bar J^m; \R^{d\times k}) \colon v_0 = f(0, \vartheta), [\delta v]_{\alpha} \le r },\]
with $\alpha \le \beta \gamma$ such that $\alpha + 2 \beta >2$ (although here may be possibly avoided, arguing with a more general $\alpha$ will be also useful in the proof of Theorem~\ref{thm:frob-2}).

For $v \in V$, define $F(v)\colon \bar{J}^m \to \R^{d\times k}$ by
\[ F( v )_p := f_{\bar{\theta}_p} = f_{\bra{p, \theta_p }}, \quad \text{for $p \in \bar J^m$,}\]
where $\theta_p := \vartheta +  \int_{[0 p]} v \cdot \d g$ and $\bar{\theta}_p := (p, \theta_p)$. Notice that, by~\eqref{eq:young}, one has the inequality
\begin{equation}\label{eq:norm-delta-theta-beta} [\delta \theta]_{\beta} \le \c(\alpha, \beta) [\delta g]_{\beta} [\delta v]_{\alpha} \le \c  \norm{g}_\beta  r \le \c \norm{g}_\beta.\end{equation}

The map  $F$ is well-defined, for Theorem~\ref{thm:jets} implies that $\theta \in C^{\beta}(J^m; \R^d)$ is $g$-differentiable with $\diff_g \theta = v$. By the chain rule (Proposition~\ref{prop:chain-rule}),  $F(v) \in C^{\beta}(J^m; \R^{d \times k})$ is $g$-differentiable, with
\begin{equation}\label{eq:proof-fixed-point-nablagF} \diff_g F(v)  = (\diff _g f)_{\bar \theta} + (\diff_{x^d} f)_{\bar \theta} v  \in C^{\alpha }(J^m; \R^{(dk)\times k}),\end{equation}
 hence by Proposition~\ref{prop:zust-1}, since $\alpha+ 2 \beta >2$, we deduce that $F(v) \in \jet^{\beta}_g(\bar J^m; \R^{d\times k})$.
%
  To show that $F$ is a contraction, if $|J|$ and $r$ are small enough, let $v$, $w \in V$ and write $\theta:= \vartheta+  \int v \d g$, $\varphi := \vartheta + \int v \d g$ so that, for $p\in J^m$,
  \[\begin{split}  F(v)_p - F(w)_p & = \delta f_{\bar{\theta}_p \bar{\varphi}_p}\\
  & = \int_{0}^1  (\diff_g f)_{u_t} \d (g\circ\bar{ u})_t +  \int_{0}^1 (\diff_{x^d} f)_{\bar u_t} \d (x^d\circ \bar{u})_t, \\
  & \quad \quad \text{with $\bar{u}_t := (1-t)\bar\theta_p + t\bar \varphi_p$, for $t \in [0,1]$}\\
  & =  \int_{0}^1 (\diff_{x^d} f)_{\bar u_t} \d (x^d \circ \bar u)_t, \quad \text{since $g( u_t) = p$ is constant,}\\
  & =  \bra{ \int_{0}^1 (\diff_{x^d} f)_{\bar u_t} \d t} \delta_{\theta_p \varphi_p} \quad \text{since $\delta (x^d \circ \bar u)_{st} = \delta_{\theta_p \varphi_p}  \delta_{st}$.}
  \end{split}\]

 Therefore, for $p$, $p' \in J^m$,  using Leibniz rule for $\delta$ and the additivity of the (Riemann) integral,
 \[\begin{split} \delta  ( F(v) - F(w)) _{pp'} & = \bra{ \int_{0}^1 (\diff_{x^d} f)_{\bar u_t'} -(\diff_{x^d} f)_{\bar u_t}\d t} \delta_{\theta_{p'} \varphi_{p'}} \\
 & \quad + \bra{ \int_{0}^1 (\diff_{x^d} f)_{\bar u_t} \d t} \delta (\varphi - \theta)_{pp'},\end{split}\]
 where $\bar u' := (1-t)\bar\theta_{p'} + t\bar \varphi_{p'}$, for $t \in [0,1]$. We bound separately the four terms in the right hand side above. First,
 \[ \begin{split} \abs{ \int_{0}^1 (\diff_{x^d} f)_{\bar u_t'} -(\diff_{x^d} f)_{\bar u_t}\d t} & \le [ (\diff_{x^d} f)_{\bar u} -(\diff_{x^d} f)_{\bar u'} ]_0\\
 & \le [\delta (\diff_{x^d} f)]_{\gamma} [\delta_{\bar{u},\bar{u'}}]_0^\gamma \quad \text{estimating the Riemann integral,} \\
 & = [\delta (\diff_{x^d} f)]_{\gamma} \bra{ |\delta_{pp'}| + \bra{ [\delta \theta]_\beta+[\delta \varphi]_\beta } |\delta_{pp'}|^\beta} ^{\gamma}, \\
 & \quad \quad \text{ since $\delta_{ \bar{u}_t \bar{u'}_t} = (\delta_{pp'}, (1-t) \delta_{\theta_p \theta_{p'} } + t\delta_{\varphi_p \varphi_{p'} })$,}\\
 & \le \norm{f} \bra{1 + 2 \c \norm{g} }^{\gamma} |\delta_{pp'}|^{\beta \gamma},
 \end{split}\]
  by~\eqref{eq:norm-delta-theta-beta}. Trivially, one has $\abs{\int_{0}^1 (\diff_{x^d} f)_{\bar u_t} \d t} \le [\diff_{x^d} f]_0$. 
    Using  ~\eqref{eq:integration-g-jet-2}, since $v_0 = w_0 = f(0, \vartheta)$, we have the inequality
     \[\begin{split} | \delta_{\theta_{p'} \varphi_{p'}} |   & = \abs{ \int_{[0,p']} (v-w) \d g} 
      \le \c(\alpha, \beta) [\delta (v-w)]_{\alpha} [\delta g]_\beta |J|^{\alpha+\beta}\\
      & \c \norm{g} |J|^{\alpha+\beta} [\delta (v-w)]_{\alpha}
     \end{split}\]
    and using~\eqref{eq:integration-g-jet-2},
     \[ \begin{split} |\delta (\varphi - \theta)_{pp'} |  & = \abs{ \int_{[pp']} (v-w) \d g} 
     \le \c(\alpha, \beta) [\delta g]_{\beta} [\delta (v-w)]_{\alpha} |\delta_{pp'}|^{\beta}\\
    &  \le \c \norm{g}   [\delta (v-w)]_{\alpha}|\delta_{pp'}|^{\beta}.\end{split}\]  
 Combining all these inequalities, we deduce that
   \[  \begin{split} | \delta  ( F(v) - F(w)) _{pp'} | & \le \c \norm{f} \norm{g} \bra{1 + 2 \c \norm{g} }^{\gamma}  |J|^{\alpha+\beta}  [\delta(v-w)]_{\alpha} |\delta_{pp'}|^{\beta \gamma}  \\
   & \quad  + \c \norm{f}  \norm{g} |J|^{\beta(1-\gamma)} [\delta (v-w)]_{\alpha} |\delta_{pp'}|^{\beta\gamma},\end{split}\]
   hence $F$ is a contraction if we choose $|J|$ and $r$ such that
  \begin{equation}\label{eq:condition-r-J-contraction-frob-1}   \c  \norm{f} \norm{g} \bra{ \bra{|J|^{1-\beta} + 2 \c \norm{g} }^{\gamma} |J|^{\alpha+ \beta}+ |J|^{\beta(1-\gamma)} } < 1. \qedhere \end{equation}
    \end{proof}

\begin{proof}[Proof of Theorem~\ref{thm:frob-2}]
We argue by induction over $m \ge 0$, the case $m=0$ being trivially true. Without loss of generality, we also assume that $p_0 = 0$. Assuming that the thesis holds for $m$, let $\vartheta\colon I^m \to \R^d$ solve~\eqref{eq:pfaff-germ} with $\vartheta_0=0$, and apply  Theorem~\ref{thm:g-differentiable-yde} with $(g^i)_{i=1}^{m}$ instead of $g$,  $m$ instead of $k$, $g^{m+1}$ instead of $y$ (and $h=1$), and  finally $f^{m+1}$ instead of $f$, obtaining a unique $(g^i)_{i=1}^{m+1}$-differentiable $\theta\colon I^{m+1} \to \R^d$ such that $\theta_{(p, 0)} = \vartheta_{p}$, for $p \in I^m$, $\diff_{(g^i)_{i=1}^{m+1}} \theta \in C^{\beta \gamma}(I^{m+1} ; \R^{d\times (m+1)})$ and, for $i \in \cur{1, \ldots, m}$,
\[ \begin{split} (\partial_{g^i} \theta)_{(p, t)} & =  (\partial_{g^i} \vartheta)_{(p, 0)} + \int_{[(p,0)(p,t)]}\bra{  (\partial_{g^i} f^{m+1})_{\bar{\theta}} +  (\diff_{x^d} f^{m+1})_{\bar{\theta}}\partial_{g^i} \theta } \d g^{m+1}\\
& =  f^i_{\bar{\vartheta}_{(p,0)}} +  \int_{[(p,0)(p,t)]}\bra{  (\partial_{g^i} f^{m+1})_{\bar{\theta}} +  (\diff_{x^d} f^{m+1})_{\bar{\theta}}\partial_{g^i} \theta } \d g^{m+1} \\
\end{split}
\]
by the inductive assumption. Using~\eqref{eq:involutivity}, one has
\[ (\partial_{g^{i}} f^{m+1})_{\bar{\theta}} = (\partial_{g^{m+1}} f^{i})_{\bar{\theta}} \]
and
\[ \begin{split} (\diff_{x^d} f^{m+1})_{\bar{\theta}}\partial_{g^i} \theta & = (\diff_{x^d} f^{m+1})_{\bar{\theta}} \bra{ \partial_{g^i} \theta  - f^{i}_{\bar{\theta}} } +  (\diff_{x^d} f^{m+1})_{\bar{\theta}} f^{i}_{\bar{\theta}}\\
& = (\diff_{x^d} f^{m+1})_{\bar{\theta}} \bra{ \partial_{g^i} \theta  - f^{i}_{\bar{\theta}} } + (\diff_{x^d} f^{i})_{\bar{\theta}} f^{m+1}_{\bar{\theta}},\end{split}\]
so that
\[ \begin{split} \int_{[(p,0)(p,t)]} & \bra{  (\partial_{g^i} f^{m+1})_{\bar{\theta}} +  (\diff_{x^d} f^{m+1})_{\bar{\theta}}\partial_{g^i} \theta } \d g^{m+1} \\&  = \int_{[(p,0)(p,t)]}\bra{(\diff_{x^d} f^{m+1})_{\bar{\theta}} + (\diff_{x^d} f^{m+1})_{\bar{\theta}} \bra{ \partial_{g^i} \theta  - f^{i}_{\bar{\theta}} } +(\diff_{x^d} f^{i})_{\bar{\theta}} f^{m+1}_{\bar{\theta}}}  \d g^{m+1}\\
& =  \int_{[(p,0)(p,t)]} \bra{ (\diff_{x^d} f^{m+1})_{\bar{\theta}} + (\diff_{x^d} f^{i})_{\bar{\theta}} f^{m+1}_{\bar{\theta}}} \\
& \quad + \int_{[(p,0)(p,t)]} (\diff_{x^d} f^{m+1})_{\bar{\theta}} \bra{ \partial_{g^i} \theta  - f^{i}_{\bar{\theta}} } \d g^{m+1} \\
& =  \delta f^i_{\bar{\theta}_{(p,0)} \bar{\theta}_{(p,t)}} + \int_{[(p,0)(p,t)]} (\diff_{x^d} f^{m+1})_{\bar{\theta}} \bra{ \partial_{g^i} \theta  - f^{i}_{\bar{\theta}} } \d g^{m+1},
\end{split}\]
since $\partial_{g^{m+1}} (f^i_{\bar{\theta}}) = (\diff_{x^d} f^{m+1})_{\bar{\theta}} + (\diff_{x^d} f^{i})_{\bar{\theta}} f^{m+1}_{\bar{\theta}}$. We conclude that the identity
\[ (\partial_{g^i} \theta)_{(p, t)} -f^i_{\bar{\theta}_{(p,t)}} = \int_{[(p,0)(p,t)]} (\diff_{x^d} f^{m+1})_{\bar{\theta}} \bra{ \partial_{g^i} \theta  - f^{i}_{\bar{\theta}} } \d g^{m+1}\]
holds. By Lemma~\ref{lem:gronwall}, with 
$$ a_t = (\partial_{g^i} \theta)_{(p, t)} -f^i_{\bar{\theta}_{(p,t)}}, \quad b_t = 0 \quad \text{and} \quad u_t = (\diff_{x^d} f^{m+1})_{\bar{\theta}_{(p,t)}},$$
since $a_0 = 0$ by inductive assumption, it follows that
\[ (\partial_{g^i} \theta)_{(p, t)} -f^i_{\bar{\theta}_{(p,t)}} = 0,\]
i.e., $\theta$ solves~\eqref{eq:pfaff}.
\end{proof}

\section{$g$-differentiability of Young differential equations}

The theorem below slightly extends known results on  (classical) differentiabily of solutions of Young differential equations (see e.g.~\cite{friz_multidimensional_2010}) to the case of $g$-differentiability.

\begin{theorem}[$g$-differentiability of YDE's]\label{thm:g-differentiable-yde}
Let $I \subseteq \R$, $J^m \subseteq \R^m$, let $\alpha, \beta, \gamma \in (0,1]$, with $\alpha \ge \beta$ and $\alpha+ \beta (1+\gamma)>2$. If 
\begin{enumerate}[(i)]
\item $y \in C^{\beta}( I; \R^{h})$, $g \in C^{\alpha}( J^m; \R^{k})$, 
\item $f\in C^{\beta}(   I\times J^m \times \R^d; \R^{d\times h } )$ is  $(y, g, x^d)$-differentiable with 
\[ (\diff_y f, \diff_{g} f, \diff_{x^d} f) \in C^{\gamma}(I \times J^m \times \R^d; \R^{(d h) \times (h+k+d)}),\]
\item $\vartheta \in C^{\alpha}(J^m; \R^d)$ is $g$-differentiable, with $\diff_g \vartheta \in C( J^m; \R^{d \times k})$,
\end{enumerate}
then, there exists a unique  $\theta\colon I\times J^m \to \R^d$ such that, 
\begin{equation}\label{eq:yde-g-diff} \theta_{(t,p)} = \vartheta_{p} + \int_{0}^t f_{(s,p, \theta_{(s,p)})} \d y_s = \vartheta_{p} + \int_{[(p,0) (p,t)]} f_{\bar{\theta}} \d y \quad \text{for every $p \in J^{m}$, $t \in I$,}\end{equation}
where $\bar{\theta}_{(t,p)} := ( t, p, \theta_{(t,p)})$. Moreover, $\theta$ is $(y,g)$-differentiable, with $\diff_y \theta = f_{\bar{\theta}}$, and 
\begin{equation}\label{eq:yde-g-deriv}  (\diff_g \theta)_{(t,p)} =  (\diff_g \vartheta)_{p} + \int_{[(0,p) (t,p)]}\bra{  (\diff_{g} f)_{\bar{\theta}} +  (\diff_{x^d} f)_{\bar{\theta}}\diff_g \theta } \d y \quad \text{
holds for  $t \in I$, $p \in J^m$.}\end{equation}
\end{theorem}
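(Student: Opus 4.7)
The proof has three layers: first, existence and uniqueness of $\theta$ as a parameter-dependent Young differential equation; second, identification of the $y$-derivative directly from the integral equation; third, construction of the candidate $g$-derivative as a fixed point of a linear YDE, and verification via a Gronwall-type argument.

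For the first layer, observe that $\alpha + \beta(1+\gamma) > 2$ combined with $\alpha \le 1$ forces $\beta(1+\gamma) > 1$, placing us in the classical Young regime. For each fixed $p \in J^m$, a Picard-type contraction on a small subinterval of $I$, followed by continuation, gives a unique $\beta$-H\"older solution $\theta_{(\cdot,p)}$ to \eqref{eq:yde-g-diff}. Two-parameter stability estimates combined with the $\alpha$-H\"older dependence of $\vartheta$ (through $g$) and of $f$ in its $p$-slot (through $g$ and the bounded $\diff_g f$) show that $(t, p) \mapsto \theta_{(t, p)}$ is jointly continuous, uniformly $\beta$-H\"older in $t$ and uniformly $\alpha$-H\"older in $p$.

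For the second layer, the fundamental Young estimate \eqref{eq:young-germ} applied directly to the integral equation \eqref{eq:yde-g-diff} yields, for each fixed $p$,
\[ \delta \theta_{(s,p)(t,p)} = f_{\bar{\theta}_{(s,p)}}\, \delta y_{st} + O(|t-s|^{\beta(1+\gamma)}), \]
and $\beta(1+\gamma) > 1$ then makes the remainder $o(|t-s|)$, so $\diff_y \theta = f_{\bar \theta}$. For the third (and main) layer, I would define a candidate $v \colon I \times J^m \to \R^{d \times k}$ as the unique Young solution, parameterised by $p$, of the linear equation
\[ v_{(t, p)} = (\diff_g \vartheta)_p + \int_0^t \bigl[ (\diff_g f)_{\bar\theta_{(s,p)}} + (\diff_{x^d} f)_{\bar\theta_{(s,p)}} v_{(s, p)} \bigr]\, dy_s, \]
whose coefficients are $\beta\gamma$-H\"older in $s$ and uniformly bounded, so this is well posed. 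To show that $v$ is genuinely the $g$-derivative of $\theta$, set $w_t := \delta \theta_{(t,p)(t,q)} - v_{(t,p)} \delta g_{pq}$, subtract the integral equations for $\theta_{(\cdot,p)}$ and $\theta_{(\cdot,q)}$, and Taylor-expand $f$ in its $(g, x^d)$-variables at $\bar\theta_{(s,p)}$ using the improved remainder \eqref{eq:integration-g-jet-2}. After recognising and absorbing the $v$-defining terms, $w$ satisfies the linear Young equation
\[ w_t = w_0 + \int_0^t (\diff_{x^d} f)_{\bar\theta_{(s,p)}}\, w_s\, dy_s + r_{pq}(t), \]
with $w_0 = \delta \vartheta_{pq} - (\diff_g \vartheta)_p \delta g_{pq} = o(|p-q|)$ by $g$-differentiability of $\vartheta$, and $r_{pq}(t) = o(|p-q|)$ uniformly in $t \in I$. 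The Young-Gronwall Lemma~\ref{lem:gronwall} then yields $\sup_t |w_t| = o(|p-q|)$, so $\diff_g \theta = v$ and \eqref{eq:yde-g-deriv} holds. Joint $(y, g)$-differentiability follows by splitting $\delta\theta_{(s,p)(t,q)} = \delta\theta_{(s,p)(t,p)} + \delta\theta_{(t,p)(t,q)}$ and controlling the cross term $[(\diff_g \theta)_{(t,p)} - (\diff_g \theta)_{(s,p)}]\, \delta g_{pq} = O(|s-t|^\beta |p-q|^\alpha)$, which is $o(|s-t| + |p-q|)$ since $\alpha + \beta > 1$.

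The main obstacle I expect is the bookkeeping in the remainder $r_{pq}(t)$: uniformly in $t$, it must be $o(|p-q|)$, and this requires the improved $(1+\gamma)$-power expansion of \eqref{eq:integration-g-jet-2} applied to $f$ in its $(g, x^d)$-variables, composed with $\theta$ whose $p$-regularity is only $\alpha$-H\"older. The exponent condition $\alpha + \beta(1+\gamma) > 2$ of the theorem is precisely what is needed to close this estimate and to keep all Young integrals well defined along the way.
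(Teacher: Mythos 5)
Your skeleton coincides with the paper's own strategy: define the candidate $g$-derivative as the solution $v$ of the linear Young equation~\eqref{eq:yde-g-deriv}, read off $\diff_y \theta = f_{\bar\theta}$ from~\eqref{eq:young-germ}, compare $w_t = \delta\theta_{(t,p)(t,q)} - v_{(t,p)}\delta g_{pq}$ with a linear equation and close with the Young--Gronwall Lemma~\ref{lem:gronwall}, then get joint $(y,g)$-differentiability by a triangle inequality. The gap is in the one step you defer to ``bookkeeping''. Lemma~\ref{lem:gronwall} does not accept a forcing term controlled only in sup norm: the conclusion $\norm{a}_\beta \le \c(|a_0| + \norm{b}_\sigma)$ requires the forcing $b$ in a H\"older norm in $t$ of some exponent $\sigma$ with $\sigma + \beta > 1$ (the forcing enters through a Young integral against $\d y$), and it is this H\"older-in-$t$ seminorm that must be $o(|p-q|)$. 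Your remainder $r_{pq}(t)$, even rewritten as a density inside the $\d y$-integral, is bounded in the natural $\beta\gamma$-H\"older norm in $t$ but \emph{not} small there; smallness in $|p-q|$ is only available in sup norm (of order $|\delta_{pq}|^{\alpha\gamma}$ or better). One must therefore trade regularity for smallness: write the increment $f_{\bar\theta_{(s,q)}} - f_{\bar\theta_{(s,p)}}$ as a Young integral in the interpolation parameter $r$ against $\d g_{p^r}$ (chain rule, Proposition~\ref{prop:chain-rule} --- note $f$ is only $g$-differentiable in $p$, so this is itself a Young integral), invoke the two-variable regularity statement for parameterized Young integrals (Lemma~\ref{lem:fubini}) and interpolate with a free exponent $x\in[0,1]$, obtaining a bound of the form $\norm{b}_{(1-x)\beta\gamma} \le \c\,|\delta_{pq}|^{\alpha(x\gamma+1)}$; the Gronwall step then needs simultaneously $\alpha(x\gamma+1)>1$ and $\beta((1-x)\gamma+1)>1$, and the existence of such an $x$ is exactly where $\alpha \ge \beta$ and $\alpha + \beta(1+\gamma) > 2$ enter (Remark~\ref{rem:alpha-beta-g-diff-yde}). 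Asserting that ``the exponent condition is precisely what is needed'' does not replace this balancing argument, which is the technical core of the proof.

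Two further points you pass over also require this same machinery rather than generic ``two-parameter stability estimates''. First, the uniform $\alpha$-H\"older continuity of $p \mapsto \theta_{(t,p)}$ (your first layer) is itself proved by a Gronwall argument whose forcing term $\int_0^1 \diff_g f_{(t,p^r,\theta^r_t)}\,\d g_{p^r}$ needs Lemma~\ref{lem:fubini} and an interpolation exponent; it cannot be taken for granted, and it is used in the main estimate (e.g.\ to upgrade the bound on $[\delta \diff_g f_{(t,p^\cdot,\theta^\cdot_t)}]_\gamma$ to order $|\delta_{pq}|^{\alpha\gamma}$). Second, $g$-differentiability in this paper requires the derivative to be a \emph{continuous} function, so you must also prove continuity of $v$ in $p$ (the paper's Step~2, another Gronwall with interpolation, using the modulus of continuity of $\diff_g\vartheta$); your proposal never addresses it.
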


\begin{remark}\label{rem:alpha-beta-g-diff-yde}
Actually, the result holds provided that there exists an $x \in [0,1]$ such that the inequalities
\begin{equation}
\label{eq:true-condition-alpha-beta}
\alpha( x \gamma+1) >1 \quad \text{ and} \quad \beta( (1-x) \gamma+1)
\end{equation}
hold. Choosing $x = (1-\alpha)/\beta \gamma + \varepsilon$, for some $\varepsilon>0$ small enough so that $x \in [0,1]$, from  the inequality $\alpha+ \beta(1+\gamma)>2$ it follows that $\beta x\gamma+\alpha>1$  and $\beta( (1-x)\gamma+1)>1$. Since we also assume $\alpha \ge \beta$, then~\eqref{eq:true-condition-alpha-beta} hold. In the given form, the theorem encompasses both the case $\alpha = \beta$, so that the condition reduces to $\beta(2 + \gamma) >2$ and that of usual differentiability of Young differential equations, with $\alpha = 1$, so that the condition reduces to $\beta(1+ \gamma)>1$. 
\end{remark}

\begin{remark}[H\"older continuity of $\diff_g \theta$]\label{rem:hold-part-g-yde}
For any $\alpha' <  \alpha (\beta(1+\gamma) -1) /\beta$, if  $\diff _g \vartheta$ is $\alpha'$-H\"older continuous, the proof of Theorem~\ref{thm:g-differentiable-yde} yields that $p \mapsto (\diff_g \vartheta)_{(p, t)}$ is $\alpha'$-H\"older continuous as well. 
\end{remark}


Before we address the proof of this result, we prove a Gronwall-type inequality and a result on continuity of Young integrals. Although both results are known in the literature, we provide here statements  and self-contained proofs useful for our purposes.

\begin{lemma}[Young-Gronwall]\label{lem:gronwall}
Let $0 \in I \subseteq \R$, $\alpha$, $\beta \in (0,1]$, $\alpha+\beta>1$,
\[ a \in C^{\alpha}(I; \R^d), \quad b \in C^{\alpha}(I; \R^{d \times k}), \quad u \in C^{\alpha}(I; \R^{(d k) \times d}),  \quad \text{and} \quad y \in C^{\beta}(I; \R^k)\]
 be such that
\begin{equation} \label{eq:linear-yde} a_t =  a_0 + \int_{0}^t (b + u  a)  \d y \quad \text{for every  $t \in I$.}\end{equation}
Then, for some $\c = \c( \alpha, \beta, |I|, \norm{u}_{\alpha}, [\delta y]_\beta)$, one has
\begin{equation}\label{eq:gronwall-thesis} \norm{a}_{\beta} \le \c ( |a_0| + \norm{b}_{\alpha} ). \end{equation}
\end{lemma}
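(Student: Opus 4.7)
The strategy is to extract a quantitative $\beta$-H\"older estimate for $a$ on a sufficiently short subinterval of $I$ via the fundamental Young inequality, and then chain together finitely many such intervals to cover $I$. Without loss of generality assume that $0$ is the left endpoint of $I$.

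Fix $\eta>0$ to be chosen later, and consider a subinterval $J=[s_0,s_0+\eta]\subseteq I$. Applying the Young estimate \eqref{eq:young-germ} to the identity $\delta a_{st}=\int_s^t(b+ua)\,\d y$ on $J$ yields, for $s,t\in J$,
\[ |\delta a_{st}|\le |(b+ua)_s|\,|\delta y_{st}|+\c\,[\delta(b+ua)]_{\alpha,J}\,[\delta y]_\beta\,|t-s|^{\alpha+\beta}. \]
Expanding $[\delta(b+ua)]_{\alpha,J}$ with the discrete Leibniz rule $\delta(ua)_{st}=u_s\delta a_{st}+(\delta u)_{st}a_t$, using $\|a\|_{0,J}\le|a_{s_0}|+[\delta a]_{\beta,J}\eta^\beta$, and controlling $[\delta a]_{\alpha,J}$ in terms of $[\delta a]_{\beta,J}$ (the exact form depending on the sign of $\alpha-\beta$; see below), a division by $|t-s|^\beta$ followed by a supremum produces an inequality of the form
\[ [\delta a]_{\beta,J}\le A+B(\eta)\,[\delta a]_{\beta,J}, \]
where $A=A(|a_{s_0}|,\|b\|_\alpha)$ is an affine linear expression with constants depending on $\alpha,\beta,\|u\|_\alpha,[\delta y]_\beta$, and $B(\eta)\to 0$ as $\eta\to 0$. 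Choosing $\eta$ small enough so that $B(\eta)\le 1/2$, I absorb the self-referential term and obtain
\[ [\delta a]_{\beta,J}\le C\bigl(|a_{s_0}|+\|b\|_\alpha\bigr), \]
which combined with $\|a\|_{0,J}\le|a_{s_0}|+[\delta a]_{\beta,J}\eta^\beta$ also bounds $\sup_J|a|$ by the same right-hand side.

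To globalize, cover $I$ by $N=\lceil|I|/\eta\rceil$ adjacent intervals of length $\eta$ with endpoints $0=s_0<s_1<\dots<s_N$. Applying the local estimate successively gives $|a_{s_{k+1}}|\le C(|a_{s_k}|+\|b\|_\alpha)$; iterating this geometric-type recursion yields $\|a\|_{0,I}\le C'(|a_0|+\|b\|_\alpha)$ with $C'$ depending on $N$, hence on $|I|$. Reapplying the local $\beta$-H\"older bound on each subinterval, and controlling cross-interval increments crudely by $|\delta a_{st}|\le 2\|a\|_{0,I}\le 2\|a\|_{0,I}\eta^{-\beta}|t-s|^\beta$, delivers the global estimate \eqref{eq:gronwall-thesis}.

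The main technical obstacle is the absorption step: keeping careful track of how the coefficient of $[\delta a]_{\beta,J}$ on the right depends on $\eta,\|u\|_\alpha$ and $[\delta y]_\beta$, so that one indeed has $B(\eta)\to 0$. A secondary subtlety is that $[\delta a]_{\alpha,J}$ appears naturally through the Leibniz rule: if $\alpha\le\beta$ one has $[\delta a]_{\alpha,J}\le[\delta a]_{\beta,J}\eta^{\beta-\alpha}$ directly, while if $\alpha>\beta$ one instead uses the a priori assumption $a\in C^\alpha$ together with the same absorption trick applied at the $\alpha$-level (the Young integral estimate equally produces a bound of the form $[\delta a]_{\alpha,J}\le A'+B'(\eta)[\delta a]_{\alpha,J}$ under the condition $\alpha+\beta>1$).
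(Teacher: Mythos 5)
Your overall strategy coincides with the paper's: a local estimate on a short interval via the Young inequality, absorption of the $[\delta a]_{\beta}$-term once the interval length is small enough (in terms of $\alpha,\beta,\norm{u}_\alpha,[\delta y]_\beta$ only), and then a chaining argument over a finite partition of $I$, propagating $\abs{a_{s_{k+1}}}\le \c(\abs{a_{s_k}}+\norm{b}_\alpha)$ and patching cross-interval increments — this is exactly the paper's structure. The only cosmetic difference is in the local step: the paper splits the integrand as $(b+u a_0)+u(a-a_0)$ and applies \eqref{eq:young} to each piece, whereas you expand $[\delta(b+ua)]_{\alpha,J}$ by the Leibniz rule after \eqref{eq:young-germ}; both produce the same self-improving inequality.

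Two points need attention. First, the absorption step presupposes $[\delta a]_{\beta,J}<\infty$, which is not among the hypotheses when $\alpha<\beta$; before subtracting $B(\eta)[\delta a]_{\beta,J}$ you must establish it. The paper does this in one line: from \eqref{eq:linear-yde} and \eqref{eq:young}, $\abs{\delta a_{st}}\le \c\bra{\norm{b}_\alpha+\norm{u}_\alpha\norm{a}_\alpha}[\delta y]_\beta\abs{\delta_{st}}^\beta$, using only $a\in C^\alpha$ and $\alpha+\beta>1$; you should insert this remark. Second, your parenthetical fallback for the case $\alpha>\beta$ does not work: the Young estimate cannot yield $[\delta a]_{\alpha,J}\le A'+B'(\eta)[\delta a]_{\alpha,J}$, because the leading term of the increment is $(b+ua)_s\,\delta y_{st}=O(\abs{t-s}^{\beta})$, and dividing by $\abs{t-s}^{\alpha}$ with $\alpha>\beta$ produces the unbounded factor $\abs{t-s}^{\beta-\alpha}$ as $t\to s$ (solutions of \eqref{eq:linear-yde} are in general no better than $\beta$-H\"older, so no $\alpha$-level self-improvement is available). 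To be fair, the paper's own proof is also implicitly confined to $\alpha\le\beta$: its bound $\norm{u(a-a_0)}_\alpha\le\norm{u}_\alpha[\delta a]_\beta\abs{I}^{\beta-\alpha}$ rests on $[\delta a]_\alpha\le[\delta a]_\beta\abs{I}^{\beta-\alpha}$, valid only for $\alpha\le\beta$, and every application of the lemma in the paper is in that regime. So the honest fix is to run your argument for $\alpha\le\beta$ (or to replace $\alpha$ by $\min\cur{\alpha,\beta}$, which however needs $2\beta>1$ when $\alpha>\beta$), rather than the claimed $\alpha$-level absorption.
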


\begin{proof}
From~\eqref{eq:linear-yde} and~\eqref{eq:young} it follows that 
\[ \abs{ \delta a_{st} } = \abs{\int_s^t(b + ua) \d y_s} \le \tilde{\c} 
 \bra{\norm{ b} _\alpha + \norm{u}_\alpha \norm{a}_\alpha} [\delta y]_\beta  |\delta_{st}|^{\beta},\]
with, here and below, $\tilde\c := \c(\alpha, \beta) (1+|I|^\alpha)$, hence $\norm{a}_\beta<\infty$. To deduce~\eqref{eq:gronwall-thesis}, write
\[ \delta a_{st} = \int_{s}^t ( b + u a_0 ) \d y  +  \int_s^t u  (a - a_0) \d y,\]
and estimate separately the two terms, using again~\eqref{eq:young}. For the first term,
\[ \abs{ \int_{s}^t (  b  + u a_0) \d y} \le \tilde \c \bra{ \norm{b}_\alpha + \norm{u}_\alpha |a_0|} [\delta y]_\beta |\delta_{st}|^{\beta},\]
and for the second term,
\[\begin{split} \abs{ \int_s^t u  (a - a_0) \cdot \d y} & \le \tilde \c \norm{ u(a-a_0)}_\alpha [\delta y]_\beta  |\delta_{st}|^{\beta} \le  \tilde \c \norm{u}_\alpha [\delta a]_{\beta} |I|^{\beta-\alpha}[\delta y]_\beta  |\delta_{st}|^{\beta}\\
&  \le \frac 1 2  \norm{a}_\beta,
\end{split}\]
provided that $|I|$ is small enough so that the inequality
\begin{equation}\label{eq:condition-gronwall-small-I}\c(\alpha, \beta) (1+ |I|^{\alpha}) \norm{u}_\alpha |I|^{\beta-\alpha} [\delta y]_\beta \le \frac 1 2\end{equation}
holds. 
In such a case,~\eqref{eq:gronwall-thesis} holds with 
\begin{equation} \label{eq:c-small} \c :=  2  \tilde{\c}  (1+\norm{ u}_{\alpha}) [\delta y]_\beta.\end{equation}

For a general $I$, introduce a partition $\cur{ t_{-n} \le t_{-n+1} \ldots \le t_0 = 0 \le \ldots \le   t_n } \subseteq I$ such that, letting $I_i := [t_i, t_{i+1}]$,~\eqref{eq:condition-gronwall-small-I} holds with $|I_i|$ instead of $|I|$, which can be achieved with $n\ge 1$ depending on $\alpha$, $\beta$, $|I|$, $\norm{u}_{\alpha}$, $[\delta y]_\beta$ (over the entire interval $I$) only. Letting $\norm{a}_{\beta, i}$, $\norm{b}_{\alpha, i}$ denote respectively the $\beta$ and $\alpha$-H\"older norms of $a$ and $b$ restricted on each interval $I_i  = [t_i, t_{i+1}]$, for $i \in \cur{0, 1, \ldots, n-1}$, a straightforward induction gives that
\[ \norm{ a}_{\beta, i} \le \c^i |a_0| + \sum_{j=1}^{i} \c^{j-i} \norm{b}_{\alpha, i} \le n (1+\c)^n\bra{ |a_0| + \norm{b}_\alpha} \quad \text{for $i \in \cur{0,1, \ldots, n-1}$,}\]
with $\c$ as in~\eqref{eq:c-small}. Arguing similarly for $i \in \cur{0, -1, \ldots, -n+1}$, we obtain an analogous bound. Finally, given $s \in I_i$, $t \in I_j$, $s \neq t$, assuming without loss of generality that $i <j$,  we have
\[\begin{split} | \delta a_{st} | & \le | \delta a_{s t_{i+1}} | + \sum_{k=i+1}^{j-1} | \delta a_{t_k t_{k+1} } |  + | \delta a_{t_{j} t}|  \\
& \le \norm{a}_{\beta, i} |\delta_{s t_{i+1} }|^{\beta} + \sum_{k=i+1}^{j-1} \norm{a}_{\beta, k} | \delta_{t_k t_{k+1}} |^{\beta} +  \norm{a}_{\beta, j} |\delta_{t_j t }|^{\beta} \\
& \le \sum_{k=i}^{j} \norm{a}_{\beta, k}   |\delta_{st}|^{\beta} \le n^2(1+\c)^n\bra{ |a_0| + \norm{b}_\alpha},
\end{split}\]
hence the thesis with  the constant $n^2(1+\c)^n$, with $\c$ being as in~\eqref{eq:c-small}. 
%
%
\end{proof}



\begin{lemma}[Regularity of Young integral]\label{lem:fubini}
Let  $\alpha$, $\beta$, $\gamma \in (0,1]$ be such that $\alpha+ \beta>1$, $f\colon I \times J \to \R$ be such that
\[ \c_\alpha := \sup_{t \in J} \norm{ f(\cdot, t)}_{\alpha} < \infty, \quad   \c_{\gamma} := \sup_{s \in I} \norm{ f(s, \cdot)}_{\gamma}  < \infty \]
and let $g \in C^{\beta}(I)$. Then, for every $x \in (0,1]$ such that $x \alpha+ \beta>1$, there is $\c$ depending on $x$, $\alpha$, $\beta$, $\gamma$, $\c_{\alpha}$, $\c_{\gamma}$, $|I|$ and $|J|$  such that 
\[ \norm{ \int_I f (s, \cdot) \d g_s }_{(1-x)\gamma } \le \c [\delta g]_{\beta}. \]
\end{lemma}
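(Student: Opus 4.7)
The plan is to estimate separately the sup norm $[\,\cdot\,]_0$ and the Hölder seminorm $[\delta \,\cdot\,]_{(1-x)\gamma}$ of the function $t \mapsto T_t := \int_I f(s, t) \d g_s$, in both cases by a suitable application of the Young integral bound~\eqref{eq:young}. For the sup norm, a direct application of~\eqref{eq:young} to $f(\cdot, t) \in C^{\alpha}(I)$ integrated against $g \in C^{\beta}(I)$ (valid since $\alpha + \beta > 1$) yields
\[ |T_t| \le \c(\alpha,\beta) \, \norm{f(\cdot, t)}_{\alpha} [\delta g]_{\beta} (1 + |I|^{\alpha}) |I|^{\beta} \le \c \, \c_{\alpha} [\delta g]_{\beta} \quad \text{for every $t \in J$.} \]

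For the seminorm, I would fix $t_1, t_2 \in J$ and set $h_s := f(s, t_1) - f(s, t_2)$, so that $T_{t_1} - T_{t_2} = \int_I h \d g$. The key observation is that $h$ enjoys two independent bounds:
\[ [h]_0 \le 2 \c_{\gamma} |t_1 - t_2|^{\gamma} \quad \text{from $\gamma$-H\"older regularity in $t$,} \quad [\delta h]_{\alpha} \le 2 \c_{\alpha}, \]
which can be interpolated at the intermediate exponent $x\alpha$. Writing $|\delta h_{s_1 s_2}| = |\delta h_{s_1 s_2}|^{x} \cdot |\delta h_{s_1 s_2}|^{1-x}$ and bounding the first factor by $[\delta h]_\alpha |s_1 - s_2|^\alpha \le 2\c_\alpha |s_1 - s_2|^\alpha$ and the second by $2[h]_0 \le 4 \c_\gamma |t_1 - t_2|^\gamma$ gives
\[ [\delta h]_{x\alpha} \le \c \, \c_{\alpha}^{x} \c_{\gamma}^{1-x} |t_1 - t_2|^{(1-x) \gamma}, \]
while trivially $[h]_0 \le 2 \c_{\gamma} |J|^{x \gamma} |t_1 - t_2|^{(1-x) \gamma}$, so that altogether $\norm{h}_{x\alpha} \le \c \, |t_1 - t_2|^{(1-x) \gamma}$.

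Since by assumption $x \alpha + \beta > 1$, one more application of~\eqref{eq:young}, now with exponent $x\alpha$ in place of $\alpha$, yields
\[ |T_{t_1} - T_{t_2}| \le \c(x\alpha, \beta) \norm{h}_{x\alpha} [\delta g]_{\beta} (1 + |I|^{x\alpha}) |I|^{\beta} \le \c \, [\delta g]_{\beta} |t_1 - t_2|^{(1-x) \gamma}, \]
which combined with the sup norm bound gives the thesis. The only (minor) subtlety is matching the interpolation exponent to the Young integrability threshold: the amount $x\alpha$ of $s$-regularity ``sacrificed'' in favor of $(1-x)\gamma$ regularity in $t$ must still leave $x\alpha + \beta > 1$, which is precisely the hypothesis on $x$.
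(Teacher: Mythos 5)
Your proposal is correct and is essentially the paper's proof: both arguments hinge on interpolating the mixed increment $f(s',t')-f(s',t)-f(s,t')+f(s,t)$ between the $\alpha$-bound in $s$ and the $\gamma$-bound in $t$ to obtain a $C^{x\alpha}$ estimate of size $O(|t-t'|^{(1-x)\gamma})$ for $s\mapsto f(s,t')-f(s,t)$, and then apply the one-dimensional Young estimate at the exponent pair $(x\alpha,\beta)$, which is exactly where the hypothesis $x\alpha+\beta>1$ enters. The only immaterial difference is that you invoke the combined bound~\eqref{eq:young} on $h=f(\cdot,t_1)-f(\cdot,t_2)$ (whose norm $\norm{h}_{x\alpha}$ already carries the sup-norm information), whereas the paper uses the germ estimate~\eqref{eq:young-germ} and treats the leading term $(f(s_0,t')-f(s_0,t))\,\delta g_{s_0 s_1}$ separately.
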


\begin{proof}
Write $I = [s_0, s_1]$, $J= [t_0, t_1]$. For $t, t' \in J$, one has
\[  \begin{split} \abs{ \int_I f_{(s,t')} \d g_{s}  - \int_I f_{(s,t)} \d g_{s}} &  \le \abs{(f_{(s_0, t')} - f_{(s_0, t)}) \delta g_{s_0 s_1}} \\
&  \quad + \abs{   \int_{s_0}^{s_1} \bra{ f_{(s,t')}  -f_{(s,t)}} \d g_{s}  - (f_{(s_0, t')} - f_{(s_0, t)}) \delta g_{s_0 s_1}}\\
& \le [ \delta f(s_0, \cdot)]_{\gamma} | \delta_{tt'}|^{\gamma}  [\delta g]_{\beta} |I |^{\beta} \\
& \quad + \c [ \delta (f_{(\cdot,t')}- f_{(\cdot, t)} )]_{x \alpha} [\delta g]_{\beta} | I|^{x \alpha+ \beta} 
\end{split}\] by~\eqref{eq:young-germ}, with $\c = \c(x \alpha, \beta)$.
For $s$, $s' \in I$, we have
\[\begin{split} | \delta \bra{ f_{(\cdot,t')}  -f_{(\cdot,t) } }_{ss'} |  & = | f_{(s',t')} - f_{(s',t)}  -f_{(s,t')} + f_{(s,t)} | \\
& \le    \min \cur{ 2 \c_\alpha |\delta_{ss'} |^\alpha, 2 \c_\gamma |\delta_{tt'} |^\gamma } \le 2 \c_{\alpha}^{x} \c_\gamma^{1-x}  |\delta_{ss'} |^{x\alpha} |\delta_{tt'} |^{(1-x)\gamma},
\end{split}\]
thus
\[ [ \delta (f_{(\cdot,t')}- f_{(\cdot, t)} )]_{x \alpha} \le 2 \c_{\alpha}^{x} \c_\gamma^{1-x}  |\delta_{tt'} |^{(1-x)\gamma}. \]
As a consequence, we obtain the inequality
\[ \sqa{  \delta \bra{ \int_I f (s, \cdot) \d g_s} }_{(1-x)\gamma}   ( \c_{\gamma} |I|^{\beta}|J|^{(1-x)\gamma} + 2 \c \c_{\alpha}^x \c_\gamma^{1-x} | I|^{x\alpha + \beta}) [\delta g]\]
By \eqref{eq:young}, we also have 
\[ \abs{ \int_I f_{(s, t_0)} \d g _s } \le \c  \c_\alpha [\delta g]_\beta( |I|^{\alpha} + |I|^{\alpha+\beta}),\]
hence the thesis.
\end{proof}

\begin{proof}[Proof of Theorem~\ref{thm:g-differentiable-yde}]
The proof is split into four steps: first, we prove that $\theta$ is H\"older continuous. Then, letting $\diff_g \theta$ be defined as the solution to~\eqref{eq:yde-g-deriv}, we show that it is continuous. Finally, we prove $(g,y)$-differentiability of $\theta$.

Before addressing these points, we notice that, by the one-dimensional case of Theorem~\ref{thm:frob-1} both $\theta$ and $\diff_g \theta$ are uniquely determined respectively by~\eqref{eq:yde-g-diff} and~\eqref{eq:yde-g-deriv}, with
\[ \sup_{p \in J} \| \theta_{(\cdot, p)}\|_{\beta}+\| \diff_g \theta _{(\cdot, p)}\|_{\beta} =:\c_{\theta} < \infty.\] 

Given $p^0$, $p^1 \in J$, for $r \in [0,1]$ write  $p^r := (1-r)p^0 + r p^1$, $\theta^r_t := (1-r)\theta_{(t, p^0)} + r \theta_{(t, p^1)}$ for $t \in I$. Notice that, $r \mapsto p^r$ and $r \mapsto \theta^r_t$ are differentiable  (in the classical sense) with $\partial_r p^r = \delta_{p^0 p^1}$, $\partial_r \theta^r_t = \delta \theta_{(t, p^0) (t, p^1)}$. Moreover,
\[ |\delta_{p^r p^{r'}}|  = |r-r'| |\delta_{p^0 p^1}| \le |\delta _{p^0 p^1}|\quad \text{and}\quad \sup_{ r \in [0,1]} [ \delta \theta^r_{\cdot}]_{\beta} \le \c_{\theta}.\]

Throughout this proof we denote by $\c$ any constant (possibly varying from line to line)  depending upon $f$, $g$, $y$, $\vartheta$, $\alpha$, $\beta$, $\gamma$ and other parameters, but not upon $p_0$,  $p_1$ or $t \in I$.

\noindent{\emph{Step 1.\ (H\"older regularity of $\theta$)}}
To show that $|\theta_{(t, p^1)} - \theta_{(t,p^0)}| \le \c | \delta_{p^0 p^1}|^{\alpha}$, we apply Lemma~\ref{lem:gronwall} with $a_t := \theta_{(t,p^1)} - \theta_{(t,p^0)}$. Indeed, by~\eqref{eq:yde-g-diff}, we have the identity
\begin{equation}\label{eq:delta-theta-proof-yde-g-diff}  \begin{split}a_t & =  \theta_{(t,p^1)}  - \theta_{(t,p^0)} \\
& = \delta \vartheta_{p^0 p^1} +  \int_0^t \bra{ f_{ (s, p^1, \theta^1_s)} -  f_{ (s, p^0, \theta^0_s)}}  \d y_s \quad \text{by~\eqref{eq:yde-g-diff},}\\
& = \delta \vartheta_{p^0 p^1}   + \int_0^t \bra{ \int_0^1 \diff_g f_{(s, p^r, \theta^r_s)} \d g_{p^r} + \int_0^1 \diff_{x^d}  f_{(s,p^r, \theta^r_s)} \d r (  \theta_{(s, p^1)} - \theta_{(s,p^0)})  } \d y_s\\
& \quad  \text{by the chain rule Proposition~\ref{prop:chain-rule}, applied to $r \mapsto f_{(p^r, s, \theta^r_s)}$,}\\
& =  a_0  +  \int_0^t \bra{b_s + u_s a_s} \d y_s,
\end{split}\end{equation}
having defined, for $t \in I$, 
\[b_t := \int_0^1 \diff_g f_{(t,p^r,  \theta^r_t)} \d g_{p^r} \quad \text{and} \quad  u_t := \int_0^1 \diff_{x^d}  f_{(t, p^r, \theta^r_t)} \d r. \]
Since $\norm{\diff_g f}_{\gamma} < \infty$ and $r \mapsto (p^r, \theta^r_t)$ is differentiable,  by composition we obtain that
\begin{equation}\label{eq:sup-partial-gf-rough} \sup_{t \in I} [ \delta \diff_g f_{(t,p^{\cdot}, \theta^{\cdot}_t)} ]_{\gamma} \le \norm{\diff_g f}_{\gamma} \bra{|\delta_{p^0p^1}| + |\delta\theta_{(t,p^0)(t,p^1)}|}^{\gamma} \le \c.\end{equation}
On the other side,
\[ \sup_{r \in [0,1]} [ \delta \diff_g f_{(\cdot , p^{r}, \theta^{r}_\cdot )} ]_{\beta \gamma} \le \norm{\diff_g f}_{\gamma} \bra{|I|^{1 - \beta} + \c_\theta}^{\gamma} \le \c.\]
Moreover, $[\delta g_{p^\cdot}]_{\alpha} \le[ \delta g]_{\alpha} | \delta_{p^0p^1}|^\alpha$. Therefore, for any $x \in [0,1]$ such that $x \gamma + \alpha>1$, Lemma~\ref{lem:fubini} applied to $f_{(t,p^r, \theta^r_t)}$ and $g_{p^r}$ yields
\[ \norm{ b }_{(1-x) \beta \gamma} \le \c  |\delta_{p^0p^1}|^\alpha.\]
Similarly (in fact,  by standard properties of Riemann integral), $\norm{u}_{\gamma} \le \c$. If $(1-x)\beta \gamma + \beta >1$, the assumptions of Lemma~\ref{lem:gronwall} are satisfied, and we conclude that
\[ \sup_{t \in I} | a_t| \le  \norm{ a }_{\beta} \le \c \bra{| a_0| + \norm{ b}_{(1-x)\gamma} } \le \c | \delta_{p^0 p^1}|^{\alpha}.\]
To conclude, therefore, it is sufficient to notice that such a choice of $x \in [0,1]$ can be done, because of the assumption~$\alpha+ \beta(1+\gamma)>2$, (choosing $x$ slightly but strictly greater than $(1-\alpha)/\gamma$).

Notice that, a posteriori, we improve~\eqref{eq:sup-partial-gf-rough} to
\begin{equation}\label{eq:sup-partial-gf-improved} \sup_{t \in I} [ \delta \diff_g f_{(t, p^{\cdot}, \theta^{\cdot}_t)} ]_{\gamma} \le \norm{\diff_g f}_{\gamma} \bra{|\delta_{p^0p^1}| + |\delta\theta_{(t, p^0)(t,p^1)}|}^{\gamma} \le \c |\delta_{p^0p^1}|^{\alpha \gamma}.\end{equation} 
A similar inequality holds with $\diff_{x^d} f$ instead of $\diff_g f$.

\noindent{\emph{Step 2.\ (continuity of $\diff_g \theta$)}}
We apply again  Lemma~\ref{lem:gronwall}, in this case with $a_t := \diff_g \theta_{(t,p^1)} -\diff_g  \theta_{(t,p^0)}$. From~\eqref{eq:yde-g-deriv}, we have the identity
\[  \begin{split} a_t & = a_0 +  \int_0^t  \delta\bra{ \diff_g f_{\bar{\theta}_{(s,\cdot)}} + \diff_{x^d}f_{\bar{\theta}_{(s,\cdot)}} \diff_g \theta_{(s,\cdot)} }_{p^0p^1} \d y_s \\
& = a_0 + \int_0^t \bra{ \delta\bra{ \diff_g f_{\bar{\theta}_{(s,\cdot)}}}_{p^0p^1} + \bra{ \delta \diff_{x^d}f_{\bar{\theta}_{(s,\cdot)}}}_{p^0p^1}  \diff_g \theta_{(s,p^1)}  + f_{\bar{\theta}_{(s,p^0)} }a_s } \d y _s \quad 
\end{split}\]
using the discrete Leibniz rule~\eqref{eq:leibniz}. We then let
\[ b_t :=  \delta\bra{ \diff_g f_{\bar{\theta}_{(t,\cdot)}}}_{p^0p^1} + \bra{ \delta \diff_{x^d}f_{\bar{\theta}_{(t,\cdot)}}}_{p^0p^1}  \diff_g \theta_{(t,p^1)}, \quad \text{and} \quad  u_t :=f_{\bar{\theta}_{(t,p^0)} }.\] 
Clearly, $\norm{u}_{\beta \gamma} \le \c$. To estimate the H\"older norm of $b$, notice that by~\eqref{eq:sup-partial-gf-improved} we have $\|b\|_0 \le \c | \delta_{p^0 p^1}|^{\alpha \gamma}$, while by composition, $\norm{b}_{\beta \gamma} \le \c$ hence interpolating, for every $x \in [0,1]$,
\[\| b\|_{(1-x) \beta \gamma } \le  \c | \delta_{p^0 p^1}|^{x\alpha \gamma}.\]
Therefore, if $(1-x) \gamma \beta + \beta >1$, we obtain by Lemma~\ref{lem:gronwall} that
\[ \sup_{t \in I} |a_t| \le \norm{ a}_{\beta} \le \c \bra{ | a_0| + \norm{b}_{(1-x) \beta \gamma} } \le \c \bra{ \omega(\delta_{p^0 p^1}) + | \delta _{p^0 p^1}|^{x \alpha\gamma}},\]
$\omega$ denoting the modulus of continuity of $\diff_g \vartheta$. To obtain Remark~\ref{rem:hold-part-g-yde}, assuming that $\omega( \delta_{p^0 p^1}) \le \c | \delta_{p^0 p^1}|^{\alpha'}$ with $\alpha' < \alpha (\beta(1+\gamma) -1)/\beta$, it is sufficient to choose $x= \alpha' / \alpha \gamma$, so that $x \alpha \gamma = \alpha'$, and the condition $\beta((1-x) \gamma+1) >1$ is satisfied.

\noindent{\emph{Step 3.\ ($(y,g)$-differentiability of $\theta$)}} In this case, we apply  Lemma~\ref{lem:gronwall}, with $a_t := \theta_{(t,p^1)} - \theta_{(t,p^0)} - \diff_g \theta_{(t, p^0)} \delta g_{p^0 p^1}$. Taking the difference between~\eqref{eq:delta-theta-proof-yde-g-diff} and~\eqref{eq:yde-g-deriv} multiplied by $\delta g _{p^0 p^1}$, we obtain the identity
\[\begin{split} a_t &= a_0 + \int_0^t  \bra{ \int_0^1 \diff_g f_{(s,p^r, \theta^r_s)}  \d g_{p^r} -  \diff_g f_{(s,p^0, \theta^r_s)} \delta g_{p^0 p^1} } \d y_s \\
& \quad + \int_0^t \bra{\int_0^1 \diff_{x^d}  f_{(s,p^r, \theta^r_s)} \d r (  \theta_{(s,p^1)} - \theta_{(s,p^0)})   - \diff_{x^d}  f_{(s,p^0, \theta^0_s)} \diff_g \theta_{(s,p^0)}  \delta g_{p^0 p^1}  } \d y_s\\
& = a_0 + \int_0^t  \bra{ \int_0^1 \diff_g f_{(s,p^r, \theta^r_s)}  \d g_{p^r} -  \diff_g f_{(s,p^0, \theta^r_s)} \delta g_{p^0 p^1} } \d y_s\\
& \quad  + \int_0^t  \int_0^1 \diff_{x^d}\bra{  f_{(s,p^r,  \theta^r_s)} \d r - \diff_{x^d}  f_{(s,p^0,  \theta^0_s)}}\d r   \diff_g f_{(s,p^0, \theta^r_s)} \delta g_{p^0 p^1} \d y_s\\
& \quad  + \int_0^t \bra{\int_0^1 \diff_{x^d}  f_{(s,p^r,  \theta^r_s)} \d r} a_s \d y_s\\
& =: \int_0^t ( b_s^1 + b^2_s+ u_s a_s )\d y_s,
\end{split}\]
having defined
\[\begin{split} b_t^1 & := \int_0^1 \bra{ \diff_g f_{(t,p^r, \theta^r_t)} -  \diff_g f_{(t,p^0,  \theta^r_t)}}  \d g_{p^r},\\
   b^2_t &:=    \int_0^1 \bra{\diff_{x^d}  f_{(t,p^r, \theta^r_t)}- \diff_{x^d}  f_{(t,p^0,  \theta^0_t)}  }\d r  \diff_g f_{(t,p^0, \theta^0_t)} \delta g_{p^0 p^1} ,\quad   \text{and}\\ 
   u_t & :=\int_0^1 \diff_{x^d}  f_{(t,p^r,  \theta^r_t)} \d r.\end{split}\]
   
 By composition, one has that $\norm{u}_{\beta \gamma} \le \c$. We prove below that, for every $x \in [0,1]$,
\begin{equation}\label{eq:yde-diff-claim1} \|b^1\|_{(1-x) \beta \gamma} \le \c | \delta_{p^0 p^1}|^{\alpha(x \gamma+1)} \quad \text{if $x \gamma+\alpha>1$,}\end{equation}
and 
\begin{equation}\label{eq:yde-diff-claim2}  \|b^2\|_{(1-x) \beta \gamma} \le \c  | \delta_{p^0 p^1} |^{\alpha\bra{ x\gamma +1}}\end{equation}
By Remark~\ref{rem:alpha-beta-g-diff-yde}, we can choose $x \in [0,1]$ such that $\alpha(x\gamma+1)$ and $\beta\bra{(1-x) \gamma+1}>1$, so that in particular $x \gamma+ \alpha>1$ and~\eqref{eq:yde-diff-claim1} holds and Lemma~\ref{lem:gronwall} applies (with $(1-x)\beta \gamma$ instead of $\alpha$). We obtain
 \[ \norm{a}_0 \le \norm{a}_{\beta} \le \c \bra{ |a_0| + \|b^1+b^2\|_{(1-x) \beta \gamma}} \le \c \bra{o(\delta_{p^0 p^1})+ | \delta_{p^0 p^1}|^{\alpha\bra{ x\gamma +1}}} = o(\delta_{p^0 p^1}).\]
This proves that, uniformly with respect to $t \in I$, $p \mapsto \theta_{(t,p)}$ is $g$-differentiable with $g$-derivative $\diff_g \theta_{(t,p)}$. By construction, $t \mapsto \theta_{(p, t)}$ is $y$-differentiable with $\partial_y \theta_{(p,t)}=  f_{\bar{\theta}_{(t,p)}}$. An application of the triangle inequality yields then that $\theta$ is $(y,g)$-differentiable.

\noindent{\emph{Step 4.\ Proof of~\eqref{eq:yde-diff-claim1} and~\eqref{eq:yde-diff-claim2}}.}

By composition, one has the inequality
   \[ \sup_{r \in [0,1]}   \norm{  \diff_g f_{(\cdot, p^{r},  \theta^{r}_\cdot)} -  \diff_g f_{(\cdot, p^0,  \theta^{0}_\cdot)}}_{\beta \gamma} \le \c.\]
By~\eqref{eq:sup-partial-gf-improved}, we have also
   \[ \sup_{t \in I} \norm{  \diff_g f_{(t, p^{\cdot}, \theta^{\cdot}_t)} -  \diff_g f_{(t, p^0,  \theta^{0}_t)}}_{\gamma} =   \sup_{t \in I} [ \delta \diff_g f_{(t, p^{\cdot}, \theta^{\cdot}_t)} ]_{\gamma} \le \c | \delta_{p^0 p^1}|^{\alpha \gamma}.\]
Since $[ \delta g_{p^\cdot}]_{\alpha} \le \c | \delta_{p^0 p^1 }|^{\alpha}$, Lemma~\ref{lem:fubini} entails the validity of~\eqref{eq:yde-diff-claim1}. 
To prove~\eqref{eq:yde-diff-claim2}, we argue similarly with the Riemann integral: by composition and standard properties of Riemann integral, one has
\[ \norm{ \int_0^1\bra{ \diff_{x^d}  f_{(\cdot, p^r,  \theta^r_\cdot)} -  \diff_{x^d}  f_{(\cdot, p^0,  \theta^0_\cdot)} } \d r}_{\beta \gamma} \le  \c.\]
By~\eqref{eq:sup-partial-gf-improved}, with $\diff_{x^d}f$ instead of $\diff_g f$, it follows that
\[ \sup_{t \in I} \abs{ \int_0^1\bra{ \diff_{x^d}  f_{(t, p^r, \theta^r_t)} -  \diff_{x^d}  f_{(t, p^0,  \theta^0_t)} } \d r} \le \c | \delta_{p^0 p^1}|^{\alpha \gamma},\]
Interpolating, for every $x \in [0,1]$,
\[  \norm{ \int_0^1\bra{ \diff_{x^d}  f_{(\cdot, p^r,  \theta^r_\cdot)} -  \diff_{x^d}  f_{(\cdot, p^0,  \theta^0_\cdot)} } \d r}_{(1-x)\beta \gamma} \le  \c| \delta_{p^0 p^1}|^{x\alpha \gamma}.\]
Since $\norm{ \partial_{g} f_{(\cdot, p^0,  \theta^0_\cdot)}}_{\beta \gamma} \le \c$ and $| \delta g_{p^0 p^1}| \le \c | \delta_{p^0 p^1}|^{\alpha}$, we conclude that~\eqref{eq:yde-diff-claim2} holds as well. 
\end{proof}

\section{Proof of Proposition~\ref{prop:compensated1}}\label{sec_proof_compensated1}
Let $F$ be the dyadically additive function defined on rectangles $Q \subseteq I^2$ by
\[  F(Q) := \int_{\partial Q} (\theta_1 -\mathfrak{v}) \d g_1 + \theta_2 \d g_2.\]
We argue that $F(Q) = o( \diam(Q)^2)$ as $\diam(Q) \to 0$. 

For fixed $Q \subseteq I^2$ and $\bar{p} \in Q$, we notice that we can always assume that $g_{\bar{p}} = 0$, and restrict from $I^2$ to $Q$, so that the inequality $[g]_0 \le [\delta g]_\beta \diam(Q)^\beta$  holds. 
Integrating by parts, we have
\begin{equation} \label{eq:proof-key-lemma-2} F(Q) = -\int_{\partial Q}  g_1 \d \theta_1 -  \int_{\partial Q} g_2 \d \theta_2 -\int_{\partial Q} \mathfrak{v} \d g_1. \end{equation}
Arguing as in the proof of~\eqref{eq:stokes-without-zust}, using the fact that $\theta_1$ in  $g$-differentiable, one has the identity
\[ \int_{\partial Q}  g_1 \d \theta_1    =  \int_{\partial Q}  g_1 (\partial_{g_1} \theta_1) \d g_1 +\int_{\partial Q}  g_1 (\partial_{g_2} \theta_1) \d g_2 \]
and, integrating by parts, the first integral can be estimated as 
\[ \int_{\partial Q} g_1 ( \partial _{g_1} \theta) \d g_1 =  \frac 1 2 \int_{\partial Q} (\partial_{ g_1} \theta) \d g_1^2 = O \bra{\diam(Q)^{\alpha+2 \beta}}.\]
For the second integral, we first integrate by parts,
\[ \int_{\partial Q}  g_1 (\partial_{g_2} \theta_1) \d g_2 = - \int_{\partial Q} g_2 \d (g_1  (\partial_{g_2} \theta_1)) = -  \int_{\partial Q} g_2 g_1 \d( \partial_{g_2} \theta_1) - \int_{\partial Q} g_2 ( \partial_{g_2} \theta_1 ) \d g_1,\]
and then estimate the first integral using~\eqref{eq:young-boundary},
\[ \begin{split}  \abs{ \int_{\partial Q} g_1 g_2 \d( \partial_{g_2} \theta_1)}&  \le \c [\delta (g_1 g_2)]_\beta [ \delta  (\partial_{g_2} \theta_1)]_\alpha \diam(Q)^{\alpha+\beta}\\
& \le  2 \c [\delta g_1]_\beta [\delta g_2]_\beta [ \delta  (\partial_{g_2} \theta_1)]_\alpha \diam(Q)^{\alpha+2\beta } =o\bra{\diam(Q)^2},\end{split}\]
using the inequality $[\delta(g_1 g_2)]_\beta \le [g_1]_0 [\delta g_2]_\beta+  [\delta g_1]_\beta[g_2]_0 \le 2 [\delta g_1]_\beta \delta g_2]_\beta \diam(Q)^\beta$.

 Arguing as in the proof of ~\eqref{eq:stokes-without-zust}, we have the following identity for the second integral in~\eqref{eq:proof-key-lemma-2}:
 \[ \int_{\partial Q}  g_2 \d \theta_2    =  \int_{\partial Q}  g_2 (\partial_{g_1} \theta_2) \d g_1 +\int_{\partial Q}  g_2 (\partial_{g_2} \theta_2) \d g_2  \] 
and the second integral above be shown to be $o\bra{\diam(Q)^2}$. Putting all these facts together, we see that the thesis amounts to prove that
\begin{equation}\label{eq:before-fubini}  \int_{\partial Q} \bra{ h g_2  - \mathfrak{v}}  \d g_1  = o\bra{ \diam(Q)^{2}},\end{equation}
where for brevity we write $h :=  \partial_{g_1} \theta_2 - \partial_{g_2} \theta_1  \in C^{\alpha}(I^2)$. 

By definition of integral along the boundary $\partial Q$ and the fact that $g_1 = g_1(s)$ depends on the variable $s$ only, we have that, writing $Q = [s_0, s_1]\times[t_0, t_1]$,
\begin{equation}\label{eq:fubini-two-integrals} \begin{split} \int_{\partial Q} \bra{ h g_2  - \mathfrak{v}}  \d g_1 & = \int_{[(s_0, t_0) (s_1, t_0)]}  (h g_2 - \mathfrak{v} ) \d g _1+ \int_{[(s_1, t_1)(s_0, t_1) ]}  ( h g_2 - \mathfrak{v} ) \d g_1\\
& =- \int_{s_0}^{s_1} \delta (h g_2  - \mathfrak{v})_{(s,t_0)(s,t_1)} \d g_1(s)\\
& = - \int_{s_0}^{s_1}\delta h_{(s,t_0)(s,t_1)} g_2(s, t_1) \d g_1(s) \\
& \quad \quad - \int_{s_0}^{s_1} \bra{ h_{(s,t_0)} (\delta g_2)_{(s,t_0)(s,t_1)} - \delta \mathfrak{v}_{(s,t_0)(s,t_1)}} \d g_1(s).
\end{split}\end{equation}
To conclude, we show separately that both integrals in the last two lines above are $o(\diam(Q)^2)$ as $\diam(Q)\to 0$. For the first one, by applying~\eqref{eq:young} on $I= [s_0, s_1]$, with $\gamma:=\min\cur{\alpha, \beta}$ instead of $\alpha$ (notice that $\alpha+2 \beta >2$ implies $\gamma+\beta>1$), $f_s :=\delta h_{(s,t_0)(s,t_1)} g_2(s, t_1)$ and $g=g_1$, hence we see that it is sufficient to prove the inequalities
\begin{equation}\label{eq:f-0-fubini} [f]_0 = \sup_{s \in [s_0, s_1]} \abs{ f_{s}} = O\bra{  \diam(Q)^{\alpha+\beta}}\end{equation}
and
\begin{equation} \label{eq:f-alpha-fubini}\quad [\delta f]_{\gamma} = \sup_{\substack{s, s' \in [s_0, s_1] \\ s \neq s' }} \frac{ \abs{ \delta f_{ss'} }} { |\delta_{ss'}|^{\gamma} } = O\bra{ \diam(Q)^{\alpha + \beta-\gamma}}.
\end{equation}
Indeed, we have for $s \in [s_0, s_1]$,
\[ |f_{s}| \le \abs{ \delta h_{(s,t_0)(s,t_1)} g_2(s, t_1)} \le [\delta h]_{\alpha} [g_2]_0 |\delta_{t_0 t_1}|^\alpha \le [\delta h]_{\alpha} [\delta g_2]_\beta \diam(Q)^{\alpha+\beta},\]
and for $s$, $s' \in [s_0, s_1]$,
\[  \begin{split} \abs{ (\delta f_2)_{ss'} } & \le  \abs{ \delta h_{(s',t_0) (s', t_1) }} \abs{ (\delta g_2)_{(s, t_1) (s',t_1)} } + \abs{  g_2(s, t_1 ) } \bra{ \abs{ \delta h_{(s, t_1) (s', t_1)} } +\abs{ \delta h_{(s, t_0) (s', t_0)} }} \\
& \le [\delta h]_{\alpha} [\delta g_2]_\beta  |\delta_{ss'}|^\beta |\delta_{t_0t_1}| ^{\alpha} + 2 [g_2]_0 [\delta h]_\alpha |\delta_{ss'}|^{\alpha} \\
& \le 3 [\delta g_2]_\beta [\delta h]_{\alpha} |\delta_{ss'}|^{\gamma} \diam(Q)^{\alpha + \beta- \gamma}.\end{split}\]

For the second integral in~\eqref{eq:fubini-two-integrals},  by applying~\eqref{eq:young} on $I= [s_0, s_1]$, with $\gamma:=\alpha/2$ instead of $\alpha$ (notice that $\alpha+2 \beta >2$ implies $\gamma+\beta>1$),
\[ f_s := h_{(s,t_0)} (\delta g_2)_{(s,t_0)(s,t_1)} - \delta \mathfrak{v}_{(s,t_0)(s,t_1)} = -\int_{t_0}^{t_1} \delta h_{(s,t_0)(s,t)} \d g_2(s, \cdot) (t)\]
 and $g=g_1$, the thesis follows if we prove the analogues of~\eqref{eq:f-0-fubini} and~\eqref{eq:f-alpha-fubini} (with $\gamma = \alpha/2$). Indeed, for $s \in [s_0, s_1]$,~\eqref{eq:young-germ} implies
\[ |f_{s}| \le  \c(\alpha, \beta) [\delta h]_{\alpha} [\delta g_2]_{\beta} |\delta_{t_0 t_1}|^{\alpha+\beta}  \le \c(\alpha, \beta) [\delta h]_{\alpha} [\delta g_2]_{\beta} \diam(Q)^{\alpha+ \beta},\]
while for $s$, $s' \in [s_0, s_1]$,  we furthermore decompose $\delta f_{ss'}$ via  the identity
\begin{equation}\label{eq:final-fubini}\begin{split} (\delta f_2)_{ss'} & = \int_{t_0}^{t_1} \delta h_{(s',t_0)(s', t)} \d g_2(s', \cdot)(t)  - \int_{t_0}^{t_1} \delta h_{(s,t_0)(s, t)} \d g_2(s, \cdot)(t) \\
& =  \int_{t_0}^{t_1} \bra{ \delta h_{(s',t_0)(s', t)} -  \delta h_{(s,t_0)(s, t)} } \d g_2(s', \cdot)(t)  \\
& \quad + \int_{t_0}^{t_1} \delta h_{(s, t_0)(s,t)} \d \bra{ g_2(s, \cdot) - g_2(s',\cdot)}(t)
\end{split}\end{equation}
and estimate separately the two integrals, using~\eqref{eq:young-germ}  (indeed, both ``integrands'' for $t = t_0$ are null). In the first integral, we use the pair $(\gamma, \beta)$ instead of $(\alpha, \beta)$, obtaining, for some $\c = \c(\gamma, \beta)$, 
\[ \begin{split} & \abs{  \int_{t_0}^{t_1} \bra{ \delta h_{(s',t_0)(s', t)} -  \delta h_{(s,t_0)(s, t)} } \d g_2(s', \cdot)(t) }  \\ 
& \quad \quad  \le \c [ \delta( \delta h_{(s',t_0)(s', \cdot)} -  \delta h_{(s,t_0)(s, \cdot)} ) ]_{\gamma} [\delta g_2]_\beta |\delta_{t_0 t_1}|^{\gamma+ \beta} \\
&  \quad \quad \le 2 \c [\delta h]_\alpha [\delta g_2]_\beta \diam(Q)^{\gamma+ \beta} |\delta_{s, s'}|^{\gamma},
\end{split}\]
where we used the bound $[ \delta( \delta h_{(s',t_0)(s', \cdot)} -  \delta h_{(s,t_0)(s, \cdot)} ) ]_{\gamma} \le 2 ]\delta h]_\alpha |\delta_{s, s'}|^{\gamma}$, that follows from the inequality, for $t$, $t' \in [t_0, t_1]$, 
\[\begin{split} \abs{ \delta( \delta h_{(s',t_0)(s', \cdot)} -  \delta h_{(s,t_0)(s, \cdot)} )_{tt'}  }&  =  \abs{ h_{s', t'} - h_{s', t} - h_{s, t'} + h_{s, t} } \\
& \le 2 [\delta h]_{\alpha} \min\cur{ |\delta_{ss'}|^{\alpha},|\delta_{tt'}|^{\alpha} } \le   2 [\delta h]_{\alpha}|\delta_{ss'}|^{\gamma} |\delta_{tt'}|^{\gamma}.\end{split}\]
For the second integral in~\eqref{eq:final-fubini}, we use the pair $(\alpha, \beta-\gamma)$ instead of $(\alpha, \beta)$ (notice that $\alpha+2\beta>2$ implies $\beta>1/2$, hence $\beta> \alpha/2 = \gamma$), obtaining for some $\c = \c(\alpha, \beta - \gamma)$,
\[ \begin{split} & \abs{ \int_{t_0}^{t_1} \delta h_{(s, t_0)(s,t)} \d \bra{ g_2(s, \cdot) - g_2(s',\cdot)}(t)}
\\  &\quad \quad  \le 
\c [ \delta( \delta h_{(s,t_0)(s, \cdot)})]_{\alpha} [\delta ( g_2(s, \cdot) - g_2(s',\cdot) )]_{\beta- \gamma} |\delta_{t_0 t_1}|^{\gamma+\beta}\\
&  \quad \quad \le  \c [\delta h]_{\alpha} [\delta g_2]_\beta \diam(Q)^{\gamma+ \beta}  |\delta_{ss'}|^{\gamma} 
\end{split}\]
where we used the bound  $[\delta ( g_2(s, \cdot) - g_2(s',\cdot) )]_{\beta- \gamma} \le [\delta g_2]_\beta  |\delta_{ss'}|^{\gamma} $ that follows from the inequality, for $t$, $t' \in [t_0, t_1]$, 
\[ \begin{split} \abs{ \delta ( g_2(s, \cdot) - g_2(s',\cdot)_{tt'}} & =   \abs{ g_2(s, t') - g_2(s, t) + g_2(s',t') -g_2(s',t) } \\
& \le 2 [\delta g_2]\min\cur{ |\delta_{ss'}|^{\beta},|\delta_{tt'}|^{\beta} } \le  2 [\delta g_2] |\delta_{ss'}|^{\gamma} |\delta_{tt'}|^{\beta - \gamma}. \qedhere
\end{split} \]

\bibliographystyle{unsrt}

\end{document}